\author{Fan Qin}
\email{qin.fan.math@gmail.com}
\theoremstyle{plain}
 \newtheorem{theorem}{Theorem}[section]
\theoremstyle{definition}
\providecommand{\tabularnewline}{\\}
\newaliascnt{Thm}{theorem}
\newtheorem{Quest}[Thm]{Question}
\newtheorem*{Def*}{Definition}
\newtheorem*{Thm*}{Theorem}
\newtheorem{Assumption}{Assumption}
\newtheorem*{Conj*}{Conjecture}
\newcommand{\kk}{\Bbbk}
\newcommand{\Z}{\mathbb{Z}}
\newcommand{\N}{\mathbb{N}}
\newcommand{\Q}{\mathbb{Q}}
\newcommand{\C}{\mathbb{C}}
\newcommand{\R}{\mathbb{R}}
\newcommand{\opname}[1]{\operatorname{\mathsf{#1}}}
\renewcommand{\mod}{\opname{mod}}%redefined!
\newcommand{\pr}{\opname{pr}}
\newcommand{\op}{^{op}}
\newcommand{\Gr}{\opname{Gr}}
\newcommand{\Hom}{\opname{Hom}}
\renewcommand{\deg}{\opname{deg}}
\newcommand{\Hf}{{\frac{1}{2}}}
\newcommand{\Rm}[1]{{\longmapsto}}
\newcommand{\Lm}[1]{{\longmapsfrom}}
\newcommand{\cA}{{\mathcal A}}
\newcommand{\cC}{{\mathcal C}}
\newcommand{\cF}{{\mathcal F}}
\newcommand{\bL}{{\mathbf L}}
\newcommand{\uc}{{\underline{c}}}
\newcommand{\tB}{{\widetilde{B}}}
\newcommand{\tQ}{{\widetilde{Q}}}
\newcommand{\hJ}{{\widehat{J}}}
\newcommand{\can}{L}%simple module, dual canonical basis
\newcommand{\gen}{\mathbb{L}}%% almost simple module, generic basis \mathbb{L}
\newcommand{\clAlg}{{\cA}}%cluster algebra
\newcommand{\diag}{{\delta}}
\tikzstyle{none}=[inner sep=0pt]
\tikzstyle{black box}=[draw=black, fill=black!25]
\tikzstyle{white box}=[draw=black, fill=white]
\tikzstyle{black circle}=[circle,draw=black!50, fill=black!25]
\tikzstyle{red circle}=[circle,draw=red!50, fill=red!25]
\tikzstyle{blue circle}=[circle,draw=blue!50, fill=blue!25]
\tikzstyle{green circle}=[circle,draw=green!50, fill=green!25]
\tikzstyle{yellow circle}=[circle,draw=yellow!50, fill=yellow!25]
\newcommand{\thistheoremname}{}
\newtheorem*{genericthm*}{\thistheoremname}
\newenvironment{namedthm*}[1]
  {\renewcommand{\thistheoremname}{#1}%
   \begin{genericthm*}}
  {\end{genericthm*}}
\renewcommand{\diag}{{d}}
\renewcommand{\can}{{\bL}}
\newcommand{\fv}{\opname{f}}
\newcommand{\ufv}{\opname{uf}}
\numberwithin{equation}{section}
\numberwithin{figure}{section}
\theoremstyle{plain}
 \newtheorem{thm}[theorem]{Theorem}
\theoremstyle{plain}
\newtheorem{conjecture}[theorem]{Conjecture}
\theoremstyle{definition}
\newtheorem{example}[theorem]{Example}
\theoremstyle{definition}
\newtheorem{defn}[theorem]{Definition}
\theoremstyle{remark}
\newtheorem{rem}[theorem]{Remark}
\theoremstyle{plain}
\newtheorem{lem}[theorem]{Lemma}
\tikzstyle{marked}=[inner sep=0.8mm,circle,draw,fill=black!80]
\newcommand{\red}{\color{red}}
\newcommand{\alert}{\red}
\newcommand{\Brac}{\opname{Brac}}
\newcommand{\Band}{\opname{Band}}
\renewcommand{\Mc}{M^\circ}
\newcommand{\upClAlg}{\mathcal{U}}
\newcommand{\AVar}{\mathbb{A}}
\newcommand{\XVar}{\mathbb{X}}
\newcommand{\bClAlg}{{\overline{\clAlg}}}
\newcommand{\bUpClAlg}{{\overline{\upClAlg}}}
\newcommand{\cRing}{\mathcal{P}}
\renewcommand{\diag}{d'}
\newcommand{\frg}{{\mathfrak{g}}}
\newcommand{\Sk}{\opname{Sk}}
\newcommand{\col}{\opname{col}}
\newcommand{\Inj}{\mathbf{I}}
\newcommand{\envAlg}{\opname{U_q}}
\newcommand{\ow}{\overrightarrow{w}}
\newtheorem{Definition-Lemma}[Thm]{Definition-Lemma}
\newcommand{\qO}{{\opname{A_q}}}
\newcommand{\opup}{{\mathrm{up}}}
\newcommand{\tropMc}{\mathcal{M}^\circ}
\newcommand{\dCan}{{\mathbf{B}^*}}
\begin{document}
%\mainmatter

%-------------------------------------------------
%  Insert the title of the chapter and (if
%  necessary) a short title for the running head.
%-------------------------------------------------

%\chapter[Running title]{Cluster algebras and their bases}
%\chapter{Cluster algebras and their bases}

\title{Cluster algebras and their bases}
\maketitle

%-------------------------------------------------
%  Insert keywords and 2020 Mathematics Subject Classification. 
%  The order of both commands must not be changed!
%-------------------------------------------------

%\keywords{Cluster algebras, canonical basis, categorification}
%\subjclass{Primary 13F60}

%-------------------------------------------------
%  Any dedication should be added here.
%-------------------------------------------------

%\begin{dedication}
%Dedicated to X on the occasion of Y.
%\end{dedication}

%-------------------------------------------------
%  Insert your abstract. It should not contain
%  any references, as e.g. "[1]" is meaningless
%  in case the abstract appears separated from
%  the rest of the chapter/book.
%-------------------------------------------------

\begin{abstract}
We give a brief introduction to (upper) cluster algebras and their
quantization using examples. Then we present several important families
of bases for these algebras using topological models. We also discuss
tropical properties of these bases and their relation to representation
theory.

This article is an extended version of the talk given at the 19th
International Conference on Representations of Algebras (ICRA 2020). 
\end{abstract}

%-------------------------------------------------
%  Insert the body of the chapter here.
%-------------------------------------------------

\section{Introduction}

\label{sec:intro}

\subsection{Cluster algebras and the dual canonical basis}

\cite{FominZelevinsky02} invented cluster algebras as an algebraic
framework to study the dual canonical basis $\dCan$ \cite{Lusztig90,Lusztig91}\cite{Kashiwara90}
and total positivity \cite{Lusztig96}. These are commutative
algebras with distinguished generators called cluster variables, which
are grouped into overlapping sets called clusters. The monomials of
cluster variables from the same cluster are called the cluster monomials.
See the first example in Section \ref{subsec:A-frist-example}.

\cite{FominZelevinsky02} expected that the coordinate rings of many
interesting varieties arising from a simply-connected semisimple group
$G$ have a natural structure of a cluster algebra. Moreover, they
should possess an analogue of the original dual canonical bases, and
these bases contain all cluster monomials. Therefore, it is a natural
and important question to construct well-behaved bases for cluster
algebras which contain all cluster monomials.

\cite{BerensteinZelevinsky05} introduced quantization for cluster
algebras. As a main motivation of \cite{FominZelevinsky02}, the expected
relation between quantum cluster monomials and the original dual canonical
basis can be formulated as below (\cite[Conjecture 1.1]{Kimura10}).
\begin{conjecture}[Quantization conjecture]
\label{conj:FZ_conj}Given any symmetrizable Kac-Moody algebra $\mathfrak{g}$
and any Weyl group element $w$, up to scalar multiples in $q^{\Hf\Z}$,
the corresponding quantum unipotent subgroup $\qO[N_{-}(w)]$ is a
quantum cluster algebra and its dual canonical basis contains all
quantum cluster monomials.
\end{conjecture}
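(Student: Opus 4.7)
The plan is to combine three ingredients: (i) a quantum cluster algebra structure on $\qO[N_-(w)]$, (ii) the characterization of the dual canonical basis $\dCan$ via triangularity, and (iii) a categorical (or triangular-basis) argument showing that quantum cluster monomials satisfy this triangularity.

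First, I would invoke the construction of the quantum cluster algebra structure on $\qO[N_-(w)]$ coming from Geiss--Leclerc--Schr\"oer (symmetric types) and Goodearl--Yakimov (general symmetrizable types). Fixing a reduced expression $\mathbf{i}=(i_1,\dots,i_\ell)$ of $w$ produces an initial seed whose cluster variables are (dual) quantum unipotent minors $D(w_{\leq k}\varpi_{i_k},\varpi_{i_k})$; by Kimura's result, each such unipotent minor lies in $\dCan$ up to a $q^{\Hf\Z}$ scalar, which gives the base case of the induction.

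Next, I would set up the characterization of elements in $\dCan$ by a \emph{triangularity} property with respect to a suitable PBW-type basis, compatible with a dominance order on $\mathfrak{g}$-weights (this is the bar-invariance plus integrality plus leading term condition). The central claim to establish is that every quantum cluster monomial in every seed reachable from the initial one is, up to $q^{\Hf\Z}$, bar-invariant and has a unique dominant PBW leading term with coefficient $1$ and all other coefficients in $q^{-\Hf}\Z[q^{-\Hf}]$. Once this triangularity property is proved, membership in $\dCan$ follows from its uniqueness.

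The inductive step is the heart of the argument and will proceed by mutation. Given a seed whose cluster variables are already known to lie in $\dCan$, I would analyze the exchange relation for a single mutation $\mu_k$. In the symmetric case, I would realize this via monoidal categorification by KLR (quiver Hecke) algebras \`a la Kang--Kashiwara--Kim--Oh: cluster variables are categorified by \emph{real simple} modules, and the exchange relation is a short exact sequence expressing the unique simple quotient of a normalized $R$-matrix composition. Uniqueness of the simple composition factor forces the right-hand side of the mutation to be a single element of $\dCan$ rather than a sum, which is exactly what triangularity demands. For general symmetrizable types one replaces KLR modules by the triangular basis machinery (compatibility of the common triangular basis with cluster mutations), reducing the statement to a purely combinatorial leading-term calculation in each exchange relation.

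The main obstacle is precisely this inductive step, for two related reasons. First, one must show that the product of a cluster variable in $\dCan$ with the mutated variable admits a \emph{unique} $\dCan$-summand on the right-hand side of the exchange relation; equivalently, that real simples remain real after mutation and that no ``higher'' dominant PBW monomial sneaks in. Second, the argument must be made uniform across all seeds and all reduced expressions, not merely the initial one, which is what forces the use of either a categorified framework or the common triangular basis formalism and which is the point where the extension from symmetric to symmetrizable Kac--Moody types becomes genuinely delicate.
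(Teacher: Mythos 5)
The paper does not give a self-contained argument for this statement: its ``proof'' is a pointer to the literature (Geiss--Leclerc--Schr\"oer and Goodearl--Yakimov for the quantum cluster structure; Kimura--Qin, Qin, Kang--Kashiwara--Kim--Oh, and finally Qin's 2020 preprint for the dual canonical basis part). Your outline correctly reproduces the shape of that literature --- unipotent quantum minors as the initial cluster, Kimura's result that they lie in $\dCan$, KLR/monoidal categorification \`a la Kang--Kashiwara--Kim--Oh for symmetric types, and the common triangular basis for the general case --- so at the level of which results are being invoked you are on the paper's route.

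There is, however, a genuine gap in the one step you describe in any detail, namely the passage to general symmetrizable $\frg$. You assert that the triangular-basis machinery reduces the inductive step to ``a purely combinatorial leading-term calculation in each exchange relation.'' The paper explicitly disclaims this: the candidate triangular basis produced by Lusztig's lemma lives a priori only in the formal completion $\kk\left\llbracket \Mc(t)\right\rrbracket$, and (Remark following the uniqueness lemma in Section~5) ``we cannot verify the existence of $\can^{t}$ by elementary calculation.'' Moreover, for symmetrizable $\frg$ the dual canonical basis is not positive, so neither positivity nor categorification is available to control the exchange relations. What actually closes the argument in the cited proof is a \emph{global} tropical analysis: one shows that the dual canonical basis elements are pointed compatibly in every seed, i.e.\ their extended $g$-vectors transform under the piecewise-linear tropical maps $\phi_k$ of Section~4, and then applies the criteria of Qin's tropical-points paper (the analogues of Lemma~\ref{lem:cluster_monomial_tropical} and Theorem~\ref{thm:tropical_imply_basis}) to identify the basis with the common triangular basis. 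This compatible-pointedness step is precisely the replacement for categorification in the symmetrizable case, and it is absent from your induction; without it, a seed-by-seed leading-term computation neither propagates triangularity to all seeds of $\Delta^{+}$ (most of which do not come from reduced words) nor certifies that the formal candidate consists of genuine Laurent polynomials.
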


Here $N_{-}(w)$ is the unipotent subgroup and $\qO[N_{-}(w)]$ the
quantum analog of its ring of functions, see Sections \ref{subsec:cluster_q_gp_example}
and \ref{subsec:From-dual-canonical-to-triangular} for more details. 
\begin{proof}
The cluster structure on $\qO[N_{-}(w)]$ was verified by \cite{GeissLeclercSchroeer10,GeissLeclercSchroeer11}
and \cite{GY13,goodearl2020integral}. 

The dual canonical basis part in Conjecture \ref{conj:FZ_conj} was
verified for acyclic quivers by \cite{KimuraQin14} (based on \cite{HernandezLeclerc09}\cite{Nakajima09}),
for simply-laced semisimple Lie algebras and partially for symmetric
Kac-Moody algebras by \cite{qin2017triangular}, for all symmetric
Kac-Moody algebras by \cite{Kang2018}, and for all cases by \cite{qin2020dual}. 
\end{proof}

\subsection{Well-behaved bases}

The approach in \cite{qin2017triangular} \cite{qin2020dual} for
Conjecture \ref{conj:FZ_conj} is based on showing that the dual canonical
basis provides the common triangular basis proposed in \cite{qin2017triangular},
see Section \ref{sec:triangular_basis} and Theorem \ref{thm:can_triangular_basis}.
This approach heavily depends on the analysis of tropical properties
of (upper) cluster algebras (Section \ref{subsec:Upper-cluster-algebras}
and \ref{sec:Tropical-properties-and-bases}). 

More precisely, the geometric counterpart for a cluster algebra is
called the cluster variety, which is constructed by gluing split tori
corresponding to the clusters. Its ring of functions is called the
upper cluster algebra. The cluster algebra is contained in the upper
cluster algebra and they often agree. Any upper cluster algebra element
can be expressed as a Laurent polynomial ring in the cluster variables
from any chosen cluster, and its Laurent expansion depends on the choice
of the cluster. By tropical properties, we mean how the Laurent degrees
change when we change the cluster. See Section \ref{sec:Tropical-properties-and-bases}
for details.

It is expected by Fock and Goncharov that the upper cluster algebra
possesses a basis with good tropical properties (i.e. parametrized
by the tropical points, see Section \ref{subsec:Tropical-properties}),
see Conjecture \ref{conj:FG}.

In the literature, for studying bases of cluster algebras, we usually
need to add a condition called the full rank assumption (Assumption
\ref{assumption:full_rank}). Under this assumption, there are three
families for an upper cluster algebra that are well-behaved, important
and intensively-studied:
\begin{itemize}
\item The generic basis in the sense of \cite{dupont2011generic}, see Section
\ref{sec:generic_basis}: For cluster algebras arising from quivers
which satisfy the injective-reachable assumption (Assumption \ref{assumption:injective-reachable}),
the existence of the generic basis is proved in \cite{qin2019bases}.
By \cite{GeissLeclercSchroeer10b,GeissLeclercSchroeer10}, this family
of bases includes the dual semi-canonical basis for the coordinate
ring $\C[N_{-}(w)]$ in the sense of Lusztig \cite{Lusztig00}. \cite{geiss2020generic,geiss2020schemes}
showed that this family includes the bangle basis for the Skein algebra
of an unpunctured surface with marked points \cite{musiker2013bases},
see Section \ref{sec:Topological-models}.
\item The common triangular basis in the sense of \cite{qin2019bases}:
This family of bases includes the dual canonical basis by \cite{qin2020dual}
(Theorem \ref{thm:can_triangular_basis}). Moreover, it often includes
the set of the isoclasses of simple modules when a monoidal categorification
is given. See Section \ref{sec:triangular_basis} for more details.
\item The theta basis in the sense of \cite{gross2018canonical}: Many cluster
algebras possess this basis (for example, when the injective-reachable
assumption holds) \cite{gross2018canonical}. It consists of the \emph{theta
functions }constructed in the \emph{scattering diagrams}, which arise
from the geometry of the cluster varieties. This family includes the
greedy basis \cite{lee2014greedy}\cite{cheung2015greedy}. \cite{MandelQin2021}
shows that it includes the bracelet basis for Skein algebras of surfaces
with marked points \cite{musiker2013bases}.
\end{itemize}
All three families of bases above are parametrized by the tropical
points (i.e. meet the expectation of Fock and Goncharov). By \cite[Theorem 1.2.1]{qin2019bases},
there are infinitely many bases with such properties, see Section
\ref{subsec:Bases-with-good-tropical}. Nevertheless, sometimes as
conjectures, all known good bases for cluster algebras in literature
should belong to these three families. See Section \ref{sec:Further-topics}
for a further discussion.

\subsection{Contents}

This article is an extended version of the author's talk given at
the 19th International Conference on Representations of Algebras (ICRA
2020). We try to give a concise introduction to cluster algebras and
their bases, in particular for those arising from quantum groups \cite{GeissLeclercSchroeer10,GeissLeclercSchroeer11}\cite{GY13,goodearl2020integral}
or from surfaces \cite{FominShapiroThurston08}. The constructions
and results we present are mostly from \cite{qin2017triangular,qin2019bases,qin2020dual}\cite{musiker2013bases}\cite{thurston2014positive}.

There has been a wide range of literature on cluster algebras, which
focuses on different problems and is based on different methods. We
omit many important structures and results for cluster algebras. We
refer the reader to \cite{Keller08c} for a good survey covering many
aspects of cluster algebras, to \cite{HernandezLeclerc09}\cite{Kang2018}
for their monoidal categorification and to \cite{gross2013birational}\cite{gross2018canonical}
for cluster varieties and the theta basis.

In Section \ref{sec:A-brief-introduction}, we give a brief introduction
to cluster algebras. We start by giving an example of a cluster algebra
using unitriangular matrices in Section \ref{subsec:A-frist-example}.
This will be our first running example for cluster algebras with a
Lie theoretic background. Then we give general definitions for cluster
algebras, upper cluster algebras, and their quantization. In Section
\ref{subsec:cluster_q_gp_example}, we discuss the quantized version
of the first example, which provides the first example for the quantum
unipotent subgroup $\qO[N_{-}(w)]$, its localization $\qO[N_{-}^{w}]$,
their dual canonical bases and their quantum cluster structure.

Section \ref{sec:Topological-models} is independent. By using topological
models, it aims to provide intuition to readers unfamiliar with cluster
algebras and their bases. We introduce Skein algebras for surfaces
with marked points, which we assume to have no punctures for simplicity.
These algebras are (often) cluster algebras. Following \cite{thurston2014positive},
we present the three important families of bases for these algebras
using curves on surfaces. We give our second running example on the annulus
(Example \ref{example:Annulus}).

In Section \ref{sec:Tropical-properties-and-bases}, we introduce
the dominance orders for Laurent degrees of Laurent polynomials, define
the tropical transformations, and give a simplified definition for
the tropical points. Then we clarify what we mean by good tropical
properties (i.e. parametrized by the tropical points), and how these
properties can be used to study bases.

In Section \ref{sec:triangular_basis}, we define the triangular basis
and the common triangular basis. We relate these bases to the dual
canonical basis for quantum groups and the simple modules in monoidal
categories.

In Section \ref{sec:generic_basis}, we give a brief introduction
to the generic basis, which is constructed from quiver Grassmannians
for quiver representations via the Caldero-Chapoton map. We discuss
the coefficient-free version of our second running example on the annulus
(Example \ref{example:Kronecker}).

In Section \ref{sec:Further-topics}, we give some important and open
questions concerning bases for cluster algebras.

\section{A brief introduction to cluster algebras\label{sec:A-brief-introduction}}

\subsection{A first example for total positivity, cluster algebras and their
bases\label{subsec:A-frist-example}}

Choose the base ring to be $\kk=\R$. Let $G=SL_{3}(\kk)$ denote
the group of the matrices with determinant $1$. It has the following
subgroup (unipotent radical):
\[
N_{-}:=\left\{ g=\left(\begin{array}{ccc}
1 & 0 & 0\\
X_{1} & 1 & 0\\
X_{2} & X_{1}' & 1
\end{array}\right),\text{\ensuremath{\forall X_{1},X_{1}',X_{2}} }\right\} 
\]
The ring of functions $\kk[N_{-}]$ is the polynomial ring $\kk[X_{1},X_{1}',X_{2}]$.
The monomials in $X_{1},X_{1}',X_{2}$ form a $\kk$-basis for $\kk[N_{-}]$,
which is the \textbf{dual PBW basis} for the corresponding quantum
group evaluated at $q=1$.

A matrix $g$ is said to be \textbf{totally non-negative} if all of its minors are non-negative. It has four interesting minors $X_{1},X_{1}',X_{2},X_{3}:=\left|\begin{array}{cc}
X_{1} & 1\\
X_{2} & X_{1}'
\end{array}\right|$ subject to the following algebraic relation
\begin{align*}
X_{1}' & =  X_{1}^{-1}\cdot(X_{3}+X_{2})
\end{align*}
They provide examples of \textbf{cluster variables} and the corresponding
\textbf{exchange relation}. 

We group the four minors into two overlapping subsets called \textbf{clusters}
$\{X_{1},X_{2},X_{3}\}$, $\{X_{1}',X_{2},X_{3}\}$. The monomials
from each subset are called \textbf{cluster monomials}. In this example,
the cluster monomials form a $\kk$-basis for $\kk[N_{-}]$, which
is the \textbf{dual canonical basis} $\dCan$ for the corresponding
quantum group evaluated at $q=1$.

We refer the reader to Fomin's ICM talk \cite{fomin2010total} for
more details on total positivity and cluster algebras.

\subsection{A general definition for cluster algebras}

Choose a base ring $\kk$, which will be $\Z$ for the classical case
or $\Z[v^{\pm}]$ for the quantum case in this paper. Here, $v$ denotes
a formal quantum parameter, and we denote $q=v^{2}$ and $v=q^{\frac{1}{2}}$.

\subsubsection*{A seed}

Let $I$ denote a finite set of vertices endowed with a partition
into unfrozen vertices and frozen vertices $I=I_{\ufv}\sqcup I_{\fv}$.
We further fix a collection of strictly positive integers $d_{i}$,
$i\in I$. Denote the diagonal matrix $D=\opname{diag}(d_{i})_{i\in I_{\ufv}}$.

A seed $t$ consists of indeterminates $X_{i}$ for $i\in I$, and an
$I\times I$ $\Z$-matrix $(b_{ij})$. We further require that $(b_{ij})$
is skew-symmetrizable via $D$, i.e., $b_{ij}d_{j}=-b_{ji}d_{i}$,
$\forall i,j\in I$. 

Let $\tB$ denote the $I\times I_{\ufv}$-submatrix and $B$ the $I_{\ufv}\times I_{\ufv}$-submatrix.
$(b_{ij})$ is called the $B$-matrix for $t$, $B$ its principal
part, $X_{i}$ its cluster variables on the vertices $i$, and $\{X_{i}|i\in I\}$
its cluster. When $(b_{ij})$ is skew-symmetric, we also associate
to $t$ a unique quiver $\tQ$ without loops or oriented $2$-cycles (called an ice quiver), whose vertex set is $I$
and
\[
b_{ij}=|\{\text{arrows from \ensuremath{i} to \ensuremath{j}}\}|-|\{\text{arrows from \ensuremath{j} to \ensuremath{i}}\}|.
\]
Its full subquiver on $I_{\ufv}$ is denoted by $Q$, called the principal
quiver.

We associate to $t$ a rank-$|I|$ lattice $\Mc(t)=\Z^{I}$ with the
unit vectors $f_{i}$, $i\in I$. Identify the corresponding group
ring $\kk[\Mc(t)]=\oplus_{m\in\Mc(t)}\kk X^{m}$ with the Laurent
polynomial ring $\kk[X_{i}^{\pm}|i\in I]$, such that $X^{f_{i}}=X_{i}$.
We use $\cdot$ to denote the commutative product.

Throughout this paper, we make the following assumption unless otherwise
specified.

\begin{Assumption}[Full rank assumption]\label{assumption:full_rank}

We assume that the matrix $\tB=(b_{ij})_{i\in I,j\in I_{\ufv}}$ is
of full-rank, i.e., its column vectors $\col_{k}\tB$, $k\in I_{\ufv}$,
are linearly independent.

\end{Assumption}

Now consider the quantum case $\kk=\Z[v^{\pm}]$. Any skew-symmetric
$\Z$-matrix $\Lambda=(\Lambda_{ij})_{i,j\in I}$ induces a skew-symmetric
bilinear form $\Lambda$ on $\Z^{I}$ (and thus on $\Mc(t)$) such
that $\Lambda(m,m'):=m^{T}\Lambda m'$, where $(\ )^{T}$ denote the
matrix transpose. By \cite{GekhtmanShapiroVainshtein03,GekhtmanShapiroVainshtein05},
under the full rank assumption, we can choose $\Lambda$ such that
it is compatible with $t$ in the following sense: for any $i\in I$,
$k\in I_{\ufv}$,
\begin{align*}
\Lambda(f_{i},\col_{k}\tB) & =  -\delta_{ik}\diag_{k}
\end{align*}
for some strictly positive integers $\diag_{k}$.

The seed $t$ endowed with a compatible bilinear form $\Lambda$ is
often called a quantum seed.

For a quantum seed $t$, we further endow $\kk[\Mc(t)]$ with the
following twisted product $*$:

\begin{align*}
X^{m}*X^{m'} & =  v^{\Lambda(m,m')}X^{m+m'}.
\end{align*}
Unless otherwise specified, we use the twisted product $*$ for the
multiplication in algebras. The skew-field of fractions for $\kk[\Mc(t)]$
is denoted by $\cF(t)$.

\subsubsection*{Mutations}

We denote the function $[a]_{+}=\max(a,0)$ for any $a\in\R$, and
$[(a_{i})]_{+}=([a_{i}]_{+})$ for any $\R$-vector $(a_{i})$.

Let there be given a seed $t$. For any unfrozen vertex $k\in I_{\ufv}$,
we can construct a new seed $t'=\mu_{k}t$ by an operation $\mu_{k}$
called the mutation in the direction $k$. More precisely, $t'$ consists
of new cluster variables $X_{i}'$ and a matrix $(b_{ij}')$, such
that

\begin{align*}
b_{ij}'  =  \begin{cases}
-b_{ik} & i=k\text{ or }j=k\\
b_{ij}+b_{ik}[b_{kj}]_{+}+[-b_{ik}]_{+}b_{kj} & \text{else}
\end{cases}.
\end{align*}
and $X_{i}'$ are the following elements in the (skew-)field $\cF(t)$:

\begin{align}
X_{i}'  =  \begin{cases}
X_{i} & i\neq k\\
X^{-f_{k}+\sum_{j}[-b_{jk}]_{+}f_{j}}+X^{-f_{k}+\sum_{i}[b_{ik}]_{+}f_{i}} & i=k
\end{cases}.\label{eq:exchange_relation}
\end{align}
Equation \eqref{eq:exchange_relation} is usually called the exchange
relation. Correspondingly, we identify two (skew-)fields $\mu_{k}^{*}:\cF(t')\simeq\cF(t)$,
such that $\mu_{k}^{*}(X_{i}')$ is given by the Laurent polynomial
defined in \eqref{eq:exchange_relation}.

For the quantum case, one can check that (see \cite{BerensteinZelevinsky05}),
$X_{i}'*X_{j}'=v^{2\Lambda'_{ij}}X_{j}'*X_{i}'$ for some $\Z$-matrix
$\Lambda':=(\Lambda'_{ij})_{i,j\in I}$. Moreover, $t'$ becomes a
quantum seed after equipping with the matrix $\Lambda'$.

As before, we associate to $t'$ a lattice $\Mc(t')=\Z^{I}$ with
unit vector $f_{i}'$. We further identify the group ring $\kk[\Mc(t')]=\oplus_{m'\in\Mc(t')}\kk\cdot(X')^{m'}$
with $\kk[(X_{i}')^{\pm}|i\in I]$, such that $(X')^{f_{i}'}=X_{i}'$.
Note that $\Lambda'$ induces a skew-symmetric bilinear form $\Lambda'$
on $\Mc(t')$ such that $\Lambda'(f_{i}',f_{j}')=\Lambda_{ij}'$.

One can check that $\mu_{k}(\mu_{k}t)=t$, see \cite{FominZelevinsky02}\cite{BerensteinZelevinsky05}.

We use $\Delta_{t}^{+}$ to denote the set of seeds obtained from
the initial seed $t$ by iterated mutations. Note that $\Delta_{t}^{+}=\Delta_{\mu_{k}t}^{+}$,
which we will denote by $\Delta^{+}$ for simplicity.

\subsubsection*{Cluster algebras}

Let there be given an initial seed $t_{0}=((X_{i})_{i\in I},(b_{ij}))$
and consider the set of seeds $\Delta^{+}=\Delta_{t_{0}}^{+}$ obtained
from $t_{0}$. The cluster algebra $\bClAlg$ is the $\kk$-algebra
generated by the cluster variables $X_{i}(t)$, $i\in I$, for every
seed $t\in\Delta^{+}$.

The (localized) cluster algebra $\clAlg$ is the localization of $\bClAlg$
at the frozen variables $X_{j}$, $j\in I_{\ufv}$. We denote the
multiplicative group of frozen factors by $\cRing:=\{X^{m}|m\in\Z^{I_{\fv}}\}$,
called the coefficient ring.
\begin{thm}[Laurent phenomenon \cite{FominZelevinsky02}\cite{BerensteinZelevinsky05}]
\label{thm:Laurent_phenomenon}For any $t\in\Delta^{+}$, $\clAlg$
is contained in the Laurent polynomial ring $\kk[\Mc(t)]$.
\end{thm}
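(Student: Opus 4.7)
The plan is to prove by induction on the minimal number $n$ of mutations needed to reach a seed $t' \in \Delta^+$ from the fixed initial seed $t$ that every cluster variable $X_i(t')$, viewed in the ambient field $\cF(t)$ via the isomorphisms $\mu_k^*$, lies in $\kk[\Mc(t)]$. Since $\bClAlg$ is generated as a $\kk$-algebra by these cluster variables and the Laurent polynomial ring is closed under sums and products, this yields $\bClAlg \subseteq \kk[\Mc(t)]$. The frozen variables $X_j$, $j\in I_\fv$, are already $X^{f_j}\in \kk[\Mc(t)]^\times$, so localizing $\bClAlg$ at them does not leave $\kk[\Mc(t)]$, giving the statement for $\clAlg$.

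The base case $n=0$ is immediate. For $n=1$, the exchange relation \eqref{eq:exchange_relation} expresses $X_k(\mu_k t)$ directly as a binomial Laurent polynomial in the $X_i(t)$, so $X_k(\mu_k t) \in \kk[\Mc(t)]$. The naive inductive step fails for $n \geq 2$: writing $t' = \mu_k t''$ with $t''$ reachable in $n-1$ mutations, the exchange relation presents $X_k(t')$ as a Laurent polynomial in the cluster of $t''$, and re-expressing this inside $\kk[\Mc(t)]$ requires inverting $X_k(t'')$, whereas the inductive hypothesis only provides $X_k(t'')$ as a Laurent polynomial in the $X_i(t)$, typically not a Laurent monomial.

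The standard remedy is the caterpillar lemma of Fomin--Zelevinsky. One compares two distinct mutation paths from $t$ to $t'$ that share an initial segment and then reduces the compatibility to a length-two "diamond": the seeds $t_0$, $\mu_k t_0$, $\mu_l t_0$, and $\mu_l \mu_k t_0$ with $k \neq l$. Working inside the localization of $\kk[\Mc(t_0)]$ at $X_k(t_0)$ and $X_l(t_0)$, one uses the explicit binomial form of the exchange relations to verify by direct computation that every cluster variable appearing in the diamond lies already in $\kk[\Mc(t_0)]$. Propagating this local compatibility along any mutation tree connecting $t$ to $t'$, combined with the single-mutation base case, upgrades the claim to arbitrary $n$.

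The main obstacle is precisely the diamond verification: one must manipulate the two-term binomials $X^{-f_k + [\pm\tB_{\cdot k}]_+} + X^{-f_k + [\mp\tB_{\cdot k}]_+}$ to show that no genuinely new denominators appear beyond $X_k(t_0)$ and $X_l(t_0)$ when expanding $X_i(\mu_l \mu_k t_0)$ in the cluster of $t_0$; the cases according to the sign of $b_{kl}$ require separate treatment. For the quantum setting $\kk = \Z[v^\pm]$, the same scheme applies but one must track the twisted product $*$ throughout: Berenstein--Zelevinsky verified that the compatibility condition $\Lambda(f_i,\col_k\tB) = -\delta_{ik}\diag_k$ is preserved by mutation and that exchange relations keep their two-term shape up to half-integer powers of $v$, so the caterpillar argument transfers once the quasi-commutation relations between monomials in $\kk[\Mc(t_0)]$ are recorded.
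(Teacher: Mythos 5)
The paper states this theorem without proof, citing \cite{FominZelevinsky02}\cite{BerensteinZelevinsky05}, so your sketch can only be measured against the standard argument in those references. Your framing is fine: reducing the statement to Laurentness of the individual cluster variables, noting that localization at the frozen variables is harmless since they are units of $\kk[\Mc(t)]$, and correctly diagnosing why the naive induction on mutation distance breaks at $n\ge 2$ (one must invert $X_k(t'')$, which is a binomial, not a monomial, in the cluster of $t$).

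The gap is in your remedy. First, the ``length-two diamond'' you describe does not exist: mutations in distinct directions do not commute, so $\mu_{l}\mu_{k}t_{0}$ and $\mu_{k}\mu_{l}t_{0}$ are in general different seeds, and there need not be two mutation paths from $t$ to $t'$ to compare (the exchange graph can be a tree). Second, and more seriously, even granting a direct verification at distance $2$, ``propagating this local compatibility along the mutation tree'' is exactly the composition of Laurent expansions that you yourself showed fails: a Laurent polynomial in quantities that are Laurent polynomials (but not monomials) in the $X_{i}(t)$ need not be Laurent, so knowing all $2$-step cases does not yield the $3$-step case. The caterpillar lemma does not work by composing short compatibilities; it shows that the variable at distance $m$, written as a fraction over the unique factorization domain $\kk[\Mc(t_{0})]$ in lowest terms, has denominator dividing two coprime elements, and its two essential inputs are (a) the coprimality of the exchange binomials attached to neighboring vertices in distinct directions and (b) an explicit computation for \emph{three} consecutive mutations in alternating directions $j,k,j$ (a rank-two computation), not for a commuting square. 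Neither coprimality nor any gcd argument appears in your sketch, so the passage from $n\le 2$ to general $n$ is not established; the same coprimality (under the full rank assumption) is also what drives the alternative starfish-theorem route of \cite{BerensteinFominZelevinsky05} quoted later in the paper.
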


\begin{example}[{$\mathfrak{sl}_{3}$ example \cite[Example 8.2.9]{qin2020dual}}]
\label{exa:quantum_cluster_SL3}

Consider the first example in Section \ref{subsec:A-frist-example}.
The set of vertices is $I=\{1,2,3\}$ such that $I_{\ufv}=\{1\}$
and $I_{\fv}=\{2,3\}$. We choose the initial seed $t$ which consists
of the cluster $(X_{1},X_{2},X_{3})$ and the matrix $(b_{ij})=\left(\begin{array}{ccc}
0 & -1 & 1\\
1 & 0 & -1\\
-1 & 1 & 0
\end{array}\right)$.

The mutation at the only unfrozen vertex $1$ generates the new seed
$t'=\mu_{1}t$, which consists of the cluster $(X_{1}',X_{2},X_{3})$
and the matrix $(b_{ij}')=\left(\begin{array}{ccc}
0 & 1 & -1\\
-1 & 0 & 0\\
1 & 0 & 0
\end{array}\right)$. In this example, the set of all seeds is $\Delta^{+}=\{t,t'\}$.

For the classical case $\kk=\Z$, it is straightforward to check
that the exchange relation is $X_{1}\cdot X_{1}'=X_{3}+X_{2}$. For
the quantum case $\kk=\Z[v^{\pm}]$, we can choose $\Lambda=\left(\begin{array}{ccc}
0 & -1 & 1\\
1 & 0 & 0\\
-1 & 0 & 0
\end{array}\right)$. Then the quantized exchange relation is $X_{1}*X_{1}'=vX_{3}+v^{-1}X_{2}$.
\end{example}

\subsection{Upper cluster algebras and cluster varieties\label{subsec:Upper-cluster-algebras}}

Let there be given any initial seed $t_{0}$ and let $\Delta^{+}$
denote the corresponding set of seeds. Identify the skew-fields $\cF(t)$
by \eqref{eq:exchange_relation}, $t\in\Delta^{+}$. The upper cluster
algebra is defined as 
\[
\upClAlg:=\bigcap_{t\in\Delta^{+}}\kk[\Mc(t)].
\]
By the Laurent phenomenon (Theorem \ref{thm:Laurent_phenomenon}),
we have $\clAlg\subset\upClAlg$. It often happens that $\clAlg=\upClAlg$.
Moreover, the upper cluster algebra has the following useful property.
\begin{thm}[Starfish Theorem \cite{BerensteinFominZelevinsky05}]
Under the full rank assumption, for any $t\in\Delta^{+}$, we have
\begin{align*}
\upClAlg & =  \kk[\Mc(t)]\bigcap(\cap_{k\in I_{\ufv}}\kk[\Mc(\mu_{k}t)]).
\end{align*}
 
\end{thm}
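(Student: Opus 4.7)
The inclusion $\upClAlg \subseteq \kk[\Mc(t)] \cap \bigcap_{k\in I_{\ufv}} \kk[\Mc(\mu_k t)]$ is immediate from the definition of $\upClAlg$ as the intersection of $\kk[\Mc(s)]$ over all $s \in \Delta^+$. The content of the theorem lies in the reverse inclusion, which I plan to establish by induction on the length of a mutation sequence connecting $t$ to an arbitrary seed $s \in \Delta^+$.

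The key reduction is to introduce, for each seed $s$, the ``star intersection''
$$R(s) := \kk[\Mc(s)] \cap \bigcap_{k \in I_{\ufv}} \kk[\Mc(\mu_k s)],$$
and to prove that $R(s)$ is mutation-invariant: for every $j \in I_{\ufv}$, the field isomorphism $\mu_j^* \colon \cF(\mu_j s) \xrightarrow{\sim} \cF(s)$ restricts to an isomorphism $R(\mu_j s) \xrightarrow{\sim} R(s)$. Granting this, any $f \in R(t)$ automatically lies in $R(s)$, hence in $\kk[\Mc(s)]$, for every $s \in \Delta^+$, so $f \in \upClAlg$.

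To prove mutation-invariance, one inclusion follows from the other by $\mu_j^2 = \id$, so it suffices to show that $f \in R(s)$ implies $f \in \kk[\Mc(\mu_k \mu_j s)]$ for every $k \in I_{\ufv}$. The case $k = j$ is immediate from $\mu_j\mu_j s = s$, and $f \in \kk[\Mc(\mu_j s)]$ is built into the definition of $R(s)$. For $k \neq j$, the argument rests on the UFD structure of $\kk[\Mc(s)]$ together with the full rank assumption, which ensures that each exchange binomial
$$P_k := X^{\sum_i [b_{ik}]_+ f_i} + X^{\sum_i [-b_{ik}]_+ f_i}$$
is irreducible in $\kk[\Mc(s)]$ and that $P_k, P_{k'}$ are coprime for $k \neq k'$ (without full rank these two monomials could coincide and the binomial would reduce to a monomial, breaking irreducibility). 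The exchange relation $X_k \cdot X_k' = P_k$ then yields an explicit criterion: an element $f \in \kk[\Mc(s)]$ lies in $\kk[\Mc(\mu_k s)]$ if and only if, when $f$ is written as a Laurent polynomial in $X_k(s)$ with coefficients in $\kk[X_i(s)^{\pm}]_{i\neq k}$, each negative-power coefficient is divisible by the matching power of $P_k$. Applying this criterion in the cluster at $s$ and transporting it to the cluster at $\mu_j s$, together with the $B$-matrix mutation rule controlling how $P_k^{(s)}$ and $P_k^{(\mu_j s)}$ are related, yields $f \in \kk[\Mc(\mu_k \mu_j s)]$.

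The main obstacle is the combinatorial bookkeeping comparing the exchange binomials and their divisibility conditions across two consecutive mutations; this is where the full rank hypothesis is essential, since without it the $P_k$ need not be irreducible or coprime and the divisibility criterion collapses. In the quantum setting, one must additionally verify that these UFD-based arguments survive the passage to the twisted product $*$; since the twist introduces only scalar factors in $v^{\Z}$, irreducibility and coprimality of the $P_k$ are preserved, provided one checks compatibility with the form $\Lambda'$ at each mutated seed.
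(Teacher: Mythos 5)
The paper itself gives no proof of this theorem: it is quoted verbatim from the cited source \cite{BerensteinFominZelevinsky05} (with the quantum case from \cite{BerensteinZelevinsky05}), so your proposal can only be compared with the argument there. Your skeleton is exactly theirs: reduce the theorem to mutation-invariance of the ``star'' intersection $R(s)=\kk[\Mc(s)]\cap\bigcap_{k}\kk[\Mc(\mu_{k}s)]$, characterize membership in $\kk[\Mc(\mu_{k}s)]$ by divisibility of the negative $X_{k}$-coefficients by powers of the exchange binomial $P_{k}$, and use coprimality of the $P_{k}$ to handle the cross terms. Two points, however.

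First, your auxiliary claim that full rank makes each $P_{k}$ \emph{irreducible} is false and, fortunately, not needed. Writing $P_{k}=X^{[-\col_{k}\tB]_{+}}(1+X^{\col_{k}\tB})$, take $\col_{k}\tB=3f_{1}$: then $1+X_{1}^{3}=(1+X_{1})(1-X_{1}+X_{1}^{2})$ is reducible. What full rank actually buys is that $\col_{k}\tB\neq0$ (so $P_{k}$ is not a unit, which is all the single-mutation divisibility criterion requires) and that no two columns of $\tB$ are parallel (so $P_{k}$ and $P_{k'}$ are coprime for $k\neq k'$); this is the ``coprime seed'' hypothesis of \cite{BerensteinFominZelevinsky05}. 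Second, and more seriously, the sentence ``applying this criterion \dots together with the $B$-matrix mutation rule \dots yields $f\in\kk[\Mc(\mu_{k}\mu_{j}s)]$'' is where the entire theorem lives: in \cite{BerensteinFominZelevinsky05} this occupies the chain of Lemmas 4.1--4.9, carried out by reducing to the rank-two case (freezing all indices other than $j,k$) and performing an explicit computation comparing the two exchange binomials before and after mutation at $j$. As written you assert this step rather than prove it, so the proposal is a correct outline of the standard proof with its core lemma left as a black box, not a complete argument.
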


Let us describe the geometric counterpart for the upper cluster algebra
$\upClAlg$. Take $\kk=\C$. Then each Laurent polynomial ring $\kk[\Mc(t)]$
is the ring of functions of the split torus $T(t)=(\C^{*})^{I}$.
The mutation map $\mu_{k}^{*}:\cF(\mu_{k}t)\simeq\cF(t)$ induces
a birational map $\mu_{k}:T(t)\dashrightarrow T(\mu_{k}t)$. The cluster
$K_2$ variety\footnote{Following \cite{fock2016symplectic}, we call $\AVar$ the cluster $K_2$-variety because it has a canonical $K_2$-class, see \cite[Section 2.5]{FockGoncharov09a}.} $\AVar$ is defined as the union of tori $\cup_{t\in\Delta^{+}}T(t)$
glued by the mutation maps, and the upper cluster algebra is ring
of functions $\kk[\AVar]$ on $\AVar$.

For completeness, let us sketch the construction of the cluster Poisson
variety $\XVar$ dual to $\AVar$. For each lattice $\Mc(t)=\oplus\Z f_{i}$,
we can construct a dual lattice $N(t)=\oplus\Z e_{i}$ with the pairing
$\langle e_{i},f_{i}\rangle=\frac{1}{d_{i}}$. The Laurent polynomial
ring $\kk[N(t)]=\oplus_{n\in N(t)}\kk Y^{n}$ is the ring of function
of a dual torus $T^{\vee}(t)$. One can define the mutation map $\mu_{k}:T^{\vee}(t)\dashrightarrow T^{\vee}(\mu_{k}t)$
using mutation of $y$-variables (defined in Section \eqref{subsec:Cluster-expansions},
see \cite{FominZelevinsky07} or \cite[(2.3)]{qin2020dual} for the
mutation rule). The corresponding cluster Poisson variety $\XVar$
is the union of the dual tori $\cup_{t\in\Delta^{+}}T^{\vee}(t)$
glued by the mutation maps.

We refer the reader to \cite{gross2013birational} for more details
on cluster varieties.

\subsection{Cluster structure on quantum groups: an $\mathfrak{sl}_{3}$ example\label{subsec:cluster_q_gp_example}}

The quantized coordinate ring associated to the first example in Section
\ref{subsec:A-frist-example} is a quantum cluster algebra. We make
a brief introduction to this algebra and its bases, following the
convention in \cite{Kimura10}.

Consider the Lie algebra $\mathfrak{g}=\mathfrak{sl}_{3}$ and its
Weyl group element $w=w_{0}$, where $w_{0}$ denotes the longest element.
We choose a reduced word $\ow=s_{1}s_{2}s_{1}$ for $w$.

The quantized enveloping algebra $\envAlg(\mathfrak{n}_{-})$ is the
$\C(q)$-algebra generated by $F_{1}$, $F_{2}$ (called Chevalley
generators) satisfying the $q$-Serre relation: 
\begin{align*}
F_{1}^{2}F_{2}-(q+q^{-1})F_{1}F_{2}F_{1}+F_{1}F_{2} & =  0.
\end{align*}

One can construct the following PBW generators for $\envAlg(\mathfrak{n}_{-})$
(called root vectors, see \cite{Lus:intro}):

\begin{align*}
F_{-}(\beta_{1}) & =  F_{1},\\
F_{-}(\beta_{3}) & =  F_{2},\\
F_{-}(\beta_{2}) & =  F_{1}F_{2}-qF_{2}F_{1},
\end{align*}
where we view the vectors $\beta_{i}$ as a formal symbol for simplicity.

For any $\uc\in\N^{3}$ (called Lusztig's parametrization), we construct
the ordered product 
\[
F_{-}(\uc):=F_{-}(\sum c_{i}\beta_{i}):=F_{-}(\beta_{3})^{c_{3}}F_{-}(\beta_{2})^{c_{2}}F_{-}(\beta_{1})^{c_{1}}
\]
where $\sum c_{i}\beta_{i}$ should be viewed as a formal sum. 

The set $\{F_{-}(\uc)|\forall\uc\}$ is a basis for $\envAlg(\mathfrak{n}_{-})$,
called the PBW basis. There is a perfect pairing on $\envAlg(\mathfrak{n}_{-})$,
called Lusztig's bilinear form $(\ ,\ )_{L}$ (alternatively, we can
use Kashiwara's bilinear form). Then we construct the dual PBW basis $\{F_{-}^{\opup}(\uc)|\forall\uc\}$ using the bilinear form $(\ ,\ )_{L}$.
Following \cite{Kimura10}, explicitly, we have 

\begin{align*}
F_{-}^{\opup}(c_{i}\beta_{i}) & =  \prod_{s=1}^{c_{i}}(1-q^{2s})\cdot F_{-}(c_{i}\beta_{i})\\
F_{-}^{\opup}(\uc) & =  F_{-}^{\opup}(\sum c_{3}\beta_{3})F_{-}^{\opup}(\sum c_{2}\beta_{2})F_{-}^{\opup}(\sum c_{1}\beta_{1})
\end{align*}

\begin{defn}
The quantum unipotent subgroup $\qO[N_{-}(w)]$ is the $\Q(q)$-algebra
spanned by the dual PBW basis.
\end{defn}

In this example, let us compute the dual PBW generators for $\qO[N_{-}(w)]$:

\begin{align*}
F_{-}^{\opup}(\beta_{1}) & =  (1-q^{2})F_{1}\\
F_{-}^{\opup}(\beta_{3}) & =  (1-q^{2})F_{2}\\
F_{-}^{\opup}(\beta_{2}) & =  (1-q^{2})(F_{1}F_{2}-qF_{2}F_{1})
\end{align*}

There is the dual bar involution $\sigma$ on $\qO[N_{-}(w)]$, see
\cite{Kimura10}. For example, we have

\begin{align*}
\sigma(F_{i}) & =-q^{2}F_{i},\\
\sigma(F_{1}F_{2}) & =q^{-1}\sigma(F_{2})\sigma(F_{1}).
\end{align*}

Let $<$ denote the lexicographical order on the set of all $\uc\in\N^{3}$. 
\begin{defn}
The dual canonical basis (or upper global basis) $B_{-}^{\opup}=\{B_{-}^{\opup}(\uc)|\forall\uc\}$
for $\qO[N_{-}(w)]$, also denoted by $\dCan$, is the unique basis
subject to the following conditions:
\begin{itemize}
\item $B_{-}^{\opup}(\uc)$ are $\sigma$-invariant;
\item $B_{-}^{\opup}(\uc)\in F_{-}^{\opup}(\uc)+\sum_{\uc'<\uc}q\Z[q]F_{-}^{\opup}(\uc')$.
\end{itemize}
\end{defn}

For example, we can explicitly check that

\begin{align*}
B_{-}^{\opup}(\beta_{i})&= F_{-}^{\opup}(\beta_{i}),\\
B_{-}^{\opup}(\beta_{1}+\beta_{3})&=(1-q^{2})(F_{2}F_{1}-qF_{1}F_{2}).
\end{align*}

Let us define $X_{1}=q^{-1}F_{-}^{\opup}(\beta_{1})$, $X_{1}'=q^{-1}F_{-}^{\opup}(\beta_{3})$,
$X_{2}=q^{-\frac{3}{2}}F_{-}^{\opup}(\beta_{2})$ and $X_{3}=q^{-\frac{3}{2}}B_{-}^{\opup}(\beta_{1}+\beta_{3})$.
It is straightforward to check that they verify the exchange relation
among quantum cluster variables as in Example \ref{exa:quantum_cluster_SL3}:

\begin{align*}
X_{1}X_{1}' & =  q^{\frac{1}{2}}X_{3}+q^{-\frac{1}{2}}X_{2}
\end{align*}
Thus, we have seen that $\qO[N_{-}(w)]$ is the quantum cluster algebra
$\bClAlg$, because they are both generated by these cluster variables.

Let $\qO[N_{-}^{w}]$ denote the localization of $\qO[N_{-}(w)]$
at $X_{2}$, $X_{3}$ (frozen variables). It is called the quantum
unipotent cell. Then $\qO[N_{-}^{w}]$ equals the corresponding (localized)
quantum cluster algebra $\clAlg$.

\section{Topological models for bases}

\label{sec:Topological-models}

In this section, following \cite{thurston2014positive}, we briefly
introduce topological models to provide intuition to readers unfamiliar
with cluster algebras and their bases.

\subsection{Skein algebras and cluster algebras}

A marked surface $\Sigma=(S,M)$ consists of an oriented topological surface
$S$ and a finite set $M$ of marked points in $S$. As a topological
surface, $\Sigma$ is viewed as $S\backslash M$. We require that
each connected component of the boundary $\partial S$ contains at
least one marked point. The marked points contained in the interior
of $S$ are called punctures. 

For simplicity, we make the following assumption. Many results can
be generalized without this assumption.

\begin{Assumption}
We assume that $\Sigma$ has no punctures. 
\end{Assumption}
For technical reasons, we exclude surfaces with empty boundary or containing connected components which are discs with $1$ or $2$ marked points.

We consider curves $C_{i}$ in $\Sigma$ that either end at marked
points or are closed loops. A multicurve (or a diagram) is a finite
union of curves, denoted by $C=\cup C_{i}$. We will consider (multi)curves
up to isotopy fixing the marked points and the crossings. The isotopy
class of $C$ is denoted by $[C]$.

Choose the base ring $\kk=\Z$. The Skein algebra $\Sk(\Sigma)$ is
the quotient of the free $\kk$-module $\oplus_{[C]}\kk[C]$ by the
Skein relations in Figure \ref{fig:Skein-relations}, see \cite{thurston2014positive}.
The multiplication in the Skein algebra is given by the union

\begin{align*}
[C]\cdot[C'] & =  [C\cup C'].
\end{align*}
Note that the empty set provides the multiplicative unit for $\Sk(\Sigma)$.

\begin{figure}
\caption{Skein relations}
\label{fig:Skein-relations}
\subfloat[the crossing resolution]{
\begin{minipage}{
	   0.55\textwidth}
	   \centering
\begin{tikzpicture}[scale=0.80] 

\begin{scope}[xshift=-70,scale=0.2]     
\draw[fill=black!20,dashed] (0,0) circle (4);     
\draw[thick] (-3,-3) to (3,3);     
\draw[thick] (-3,3) to (3,-3); 
\end{scope} 

\node (=) at (-1.25,0) {$=$}; 

\begin{scope}[xshift=0,scale=0.2]     
\draw[fill=black!20,dashed] (0,0) circle (4);     
\draw[thick] (-3,-3) to [out=60,in=-60] (-3,3);     
\draw[thick] (3,-3) to [out=120,in=-120] (3,3); 
\end{scope} 

\node (+) at (1.25,0) {$+$}; 

\begin{scope}[xshift=70,scale=0.2]     
\draw[fill=black!20,dashed] (0,0) circle (4);     
\draw[thick] (-3,-3) to [out=60,in=120] (3,-3);     
\draw[thick] (-3,3) to [out=-60,in=-120] (3,3); 
\end{scope} 

\end{tikzpicture}
\end{minipage}
}
\hfill
\subfloat[the unknot removal]{
\begin{minipage}[c]{
	   0.38\textwidth}
	   \centering
\begin{tikzpicture}[scale=0.80]

\begin{scope}[xshift=-35,scale=0.2]     
\draw[fill=black!20,dashed] (0,0) circle (4);     
\draw[thick] (0,0) circle (2); 
\end{scope} 

\node (=) at (0,0) {$=$}; 

\node (const) at (0.5,0.05) {$-2$}; 

\begin{scope}[xshift=50,scale=0.2]     
\draw[fill=black!20,dashed] (0,0) circle (4); 
\end{scope}  

\end{tikzpicture}
\end{minipage}
}
%\hfill
\subfloat[the monogon removal (for an arc ending at the boundary)]{
\begin{minipage}[c]{
	   0.55\textwidth}
	   \centering
\begin{tikzpicture}[scale=0.80]
\begin{scope}[xshift=-35,scale=0.2] 	
\begin{scope} 	
\clip (0,0) circle (4); 	
\draw[fill=black!20,thick] (-5,-3) to [in=180,out=20] (0,-2) to [in=160,out=0] (5,-3) to [line to] (5,5) to (0,5) to (-5,5); 	
\node (pt) at (0,-2) [marked] {}; 	
\draw[thick] (pt) to [out=60,in=0] (0,2) to [out=180,in=120] (pt); 	
\end{scope} 	
\draw[dashed] (0,0) circle (4); 
\end{scope} 
\node (=) at (0,0) {$=$}; 
\node (0) at (0.5,0.05) {$0$}; 
\end{tikzpicture}
\end{minipage}
}
\end{figure}
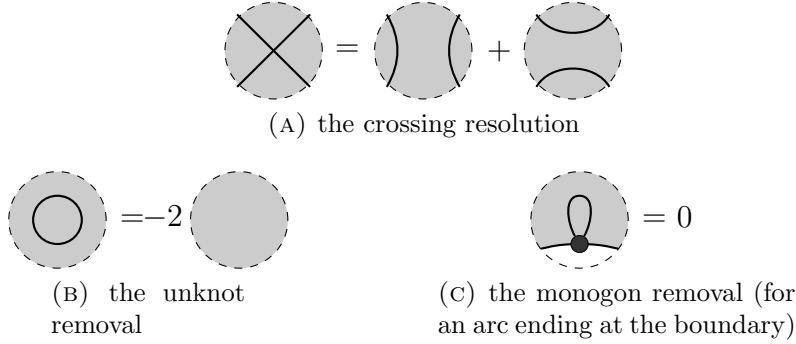

A curve is said to be simple if it is non-contractible and has no
self-crossing. An arc $\gamma$ is a simple curve ending at marked
points. A triangulation\footnote{We should not confuse the symbol $\Delta$ for a triangulation and
$\Delta^{+}$ for the set of seeds.} $\Delta$ is a maximal collection of pairwise non-isotopic non-crossing arcs $\gamma_{i}$
in $\Sigma$. 

Following \cite{FominShapiroThurston08}, we associate a seed $t_{\Delta}$ to $\Delta$ as follows. Associate to each $\gamma_i\in \Delta$ a vertex denoted by $i$. The $i$-th cluster variable of $t_{\Delta}$ is defined to be the Skein algebra element $[\gamma_i]$, which is frozen if $\gamma_i$ is a boundary arc. For each pair of arcs $\gamma_i,\gamma_j\in\Delta$, we define $b_{ij}$
to be the following sum over all triangles $T$ in $\Delta$:\footnote{Here, we view a triangle $T$ as an oriented circle via homotopy, whose orientation is induced by that of $S$, and we investigate the relative positions of the arcs $\gamma_i$ and $\gamma_j$ on the circle.} 
\begin{align} \label{eq:b_matrix}
b_{ij}:=\sum_{T\subset\Delta}\begin{cases}
0 & \mbox{if \ensuremath{\gamma_i} and \ensuremath{\gamma_j} are not both in \ensuremath{T};}\\
1 & \mbox{if \ensuremath{\gamma_j} is the arc immediately clockwise of \ensuremath{\gamma_i} in \ensuremath{T};}\\
-1 & \mbox{if \ensuremath{\gamma_j} is the arc immediately counterclockwise of \ensuremath{\gamma_i} in \ensuremath{T}.}
\end{cases}
\end{align}
See Examples \ref{example:Annulus}.

In this way, we obtain a classical cluster algebra $\bClAlg=\bClAlg(t_{\Delta})$,
which is independent of the triangulation we choose. We skip the details
and refer the reader to \cite{FominShapiroThurston08} for the precise
construction. Such cluster structures are closely related to the Teichmüller
theory for $\Sigma$, see \cite{fomin2018cluster}\cite{FockGoncharov06a}.

For unpunctured $\Sigma$, \cite{muller2016skein} proposed a natural
quantization of $\bClAlg(t_{\Delta})$ and $\Sk(\Sigma)$.
\begin{thm}[\cite{muller2016skein}]
We have natural inclusions of $\kk$-algebras $\bClAlg\subset\Sk(\Sigma)\subset\bUpClAlg$.
When there are at least two marked points on each connected component
of the boundary $\partial S$, we have $\bClAlg=\Sk(\Sigma)=\bUpClAlg$.
\end{thm}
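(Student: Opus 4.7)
The plan is to prove the two inclusions separately and then establish equality under the boundary hypothesis.

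For the inclusion $\bClAlg \subset \Sk(\Sigma)$, I would fix a triangulation $\Delta$ and identify its associated seed $t_{\Delta}$ via \eqref{eq:b_matrix}. The initial cluster variables $[\gamma_i]$ with $\gamma_i\in\Delta$ visibly lie in $\Sk(\Sigma)$, so it suffices to show the property is stable under mutation. Every mutation $\mu_k$ at an unfrozen vertex is modeled by flipping the internal arc $\gamma_k\in\Delta$ inside the unique quadrilateral $Q_k$ bounded by four arcs of $\Delta$, producing a new arc $\gamma_k^*$. The key lemma is that applying the crossing-resolution Skein relation (Figure~\ref{fig:Skein-relations}(a)) to the single transverse intersection of $[\gamma_k]$ and $[\gamma_k^*]$ inside $Q_k$ gives exactly the two Ptolemy monomials, namely the products of the cluster variables on the two pairs of opposite sides of $Q_k$. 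One checks that this matches the exchange relation \eqref{eq:exchange_relation} on the nose; in the quantized setting of Muller one additionally tracks framings to recover the correct powers of $v$.

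For $\Sk(\Sigma)\subset\bUpClAlg$, I would show that every diagram $[C]$ admits a Laurent expansion in every cluster. Fix a triangulation $\Delta$ and isotope $C$ to minimize $|C\cap\Delta|$. Induct on this intersection number: at an outermost crossing of $C$ with some $\gamma_k\in\Delta$, apply the Skein relation to split $[C]$ into a sum of two diagrams with strictly fewer crossings with $\Delta$, each of which lies (after bookkeeping with boundary arcs) in $\kk[\Mc(t_\Delta)]$ by induction. By the Starfish Theorem, to conclude $[C]\in\bUpClAlg$ it is enough that this Laurent property holds for $t_\Delta$ and for each adjacent seed $\mu_k t_\Delta$; but both are covered by applying the same argument to the triangulation obtained by flipping $\gamma_k$.

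For the equality under the two-marked-points-per-boundary hypothesis, the efficient route is Muller's local acyclicity theorem. Under this hypothesis one can exhibit a triangulation whose principal quiver is locally acyclic (concretely, one uses that there are enough boundary arcs to sink-remove vertices in an acyclic fashion), and local acyclicity immediately yields $\bClAlg=\bUpClAlg$. To close the sandwich, it remains to show $\Sk(\Sigma)\subset\bClAlg$: any simple arc arises as a cluster variable because one can iteratively flip to a triangulation containing it, and any closed loop $\alpha$ can be expressed as a polynomial in cluster variables by cutting $\alpha$ along an arc of the triangulation it crosses — such a cutting arc exists precisely because each boundary component carries $\geq 2$ marked points, which guarantees the loop is not homotopic into an unreachable corner.

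The main obstacle is the last step: showing that \emph{every} closed loop lies in $\bClAlg$. Unlike arcs, closed loops are never cluster variables, and the argument requires delicate combinatorial control of how Skein resolutions of a loop unfold into cluster monomials. This is where the boundary hypothesis is essential — without it, certain loops on once-marked annular pieces resist such an expansion, and $\Sk(\Sigma)$ can strictly contain $\bClAlg$.
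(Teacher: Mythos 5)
The survey you are reading does not prove this theorem; it is quoted from \cite{muller2016skein}, so your proposal can only be measured against Muller's argument, whose overall architecture you do reproduce: realize flips as crossing resolutions to get $\bClAlg\subset\Sk(\Sigma)$, prove a Laurent property for diagrams to get $\Sk(\Sigma)\subset\bUpClAlg$, and use local acyclicity for the equality. Two points in your write-up are genuinely wrong as stated, though.

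First, in the step $\Sk(\Sigma)\subset\bUpClAlg$, you propose to ``apply the Skein relation to split $[C]$ into a sum of two diagrams with strictly fewer crossings with $\Delta$.'' The Skein relation resolves a crossing \emph{inside} a diagram; a crossing of $C$ with a triangulation arc $\gamma_k$ is not a crossing of the diagram $C$, since $\Delta$ is a reference object and not part of $C$. The correct mechanism is to multiply first: $[\gamma_k]\cdot[C]=[C\cup\gamma_k]$ does contain that crossing, and resolving it shows $[\gamma_k]\cdot[C]$ is a sum of classes of diagrams with strictly fewer intersections with $\Delta$. Iterating, a suitable monomial $\prod_i[\gamma_i]^{N_i}$ times $[C]$ becomes a polynomial in arcs compatible with $\Delta$ (closed components disjoint from $\Delta$ are contractible and are removed by the unknot relation; arcs disjoint from $\Delta$ are isotopic to arcs of $\Delta$ by maximality), whence $[C]$ is a Laurent polynomial in the cluster of $t_\Delta$. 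Without this correction your induction does not start.

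Second, your ``main obstacle'' in the last paragraph is not an obstacle. Once you have the chain $\bClAlg\subset\Sk(\Sigma)\subset\bUpClAlg$ and you have established $\bClAlg=\bUpClAlg$ via local acyclicity under the two-marked-points-per-boundary-component hypothesis, the equality $\bClAlg=\Sk(\Sigma)=\bUpClAlg$ follows immediately from the sandwich; there is nothing left to prove about closed loops, and no separate argument that every loop lies in $\bClAlg$ is needed. (Relatedly, ``a triangulation whose principal quiver is locally acyclic'' is a category error: local acyclicity is a property of the cluster algebra, established via a cover by acyclic cluster localizations, not of a single quiver.) With these two repairs your outline is the standard one.
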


Table \ref{table:compare_topology_cluster} summarizes analogous notions
and structures appearing in the topology of $\Sigma$ and the corresponding
cluster algebra. The symbol $\sim$ means a correspondence or an analogy.

\begin{table}

\caption{Comparison: topology and cluster theory}
\label{table:compare_topology_cluster}%
\begin{tabular}{|c|c|c|}
\hline 
Topology &  & Cluster theory\tabularnewline
\hline 
\hline 
triangulation $\Delta$ & $\sim$ & seed $t_{\Delta}$\tabularnewline
\hline 
arc & $\sim$ & cluster variable\tabularnewline
\hline 
boundary arc & $\sim$ & coefficient/frozen variable\tabularnewline
\hline 
$\cup\gamma_{i}$ for $\gamma_{i}\in\Delta$ & $\sim$ & cluster monomial\tabularnewline
\hline 
union & $\sim$ & multiplication\tabularnewline
\hline 
crossing resolution &  & algebra relation\tabularnewline
\hline 
\end{tabular}
\end{table}

\begin{example}[Annulus]
\label{example:Annulus}

We consider an annulus with two marked points on its boundary, see
Figure \ref{fig:annulus}. Denote its boundary arcs by $b_{1},b_{2}$.
Choose the initial triangulation $\Delta=\{x_{1},x_{2}\}\cup\{b_{1},b_{2}\}$.
By rotating boundary components, it is clear that this marked surface
has infinitely many triangulations, i.e., infinitely many cluster
variables. 

Let us associate vertices $1,2,3,4$ to the arcs $x_1,x_2,b_1,b_2$ respectively. By \eqref{eq:b_matrix}, the matrix $(b_{ij})$ for the initial seed $t_\Delta$ is given by
\begin{align*}
(b_{ij})=\left(\begin{array}{cccc}
0 & -2&1&1\\
2 & 0&-1&-1\\
-1&1&0&0\\
-1&1&0&0
\end{array}\right).
\end{align*}

\begin{figure}
\caption{Annulus}
\label{fig:annulus}	
\begin{tikzpicture}[scale=0.25] 	
\draw[color=blue]  (0,0) circle (10); 	
\draw[color=blue]  (0,0) circle (4); 	
\node[color=blue] (b1) at (10.5,0) {$b_1$}; 	
\node[color=blue] (b2) at (3.5,0) {$b_2$}; 
\draw node[circle,fill,inner sep=0pt,minimum size=1pt] (vA) at (0,10) {A}; 
\draw node[circle,fill,inner sep=0pt,minimum size=1pt] (vB) at (0,4) {B}; 
\draw (0,10) .. controls (4,8) and (8,3) .. node[right]{$x_1$} (8,0); 	
\draw (8,0) .. controls (8,-5) and (8,-8) .. (0,-8); 	
\draw (0,-8) .. controls (-8,-8) and (-8,-5) .. (-8,0); 	
\draw (-8,0) .. controls (-8,3) and (-8,4) .. (0,4); 
\draw (vA) edge node[right]{$x_2$}  (vB);
\end{tikzpicture} 

\end{figure}
\end{example}

By evaluating the frozen variables to $1$ in $\Sk(\Sigma)$, we obtain
the coefficient-free Skein algebra $\Sk'(\Sigma)$. 
\begin{example}[Kronecker type]
\label{example:Kronecker}

We continue Example \ref{example:Annulus}, but removing the frozen
vertices and consider the coefficient-free Skein algebra $\Sk'(\Sigma)$.
Then $\Sk'(\Sigma)$ is the following classical cluster algebra $\bClAlg$
without frozen variables.

Denote the initial cluster variables by $X_i=[x_i]$, $i=1,2$. We have the initial seed $t_{0}=\left((X_{1},X_{2}),B\right)$ whose
$B$-matrix is $B=\left(\begin{array}{cc}
0 & -2\\
2 & 0
\end{array}\right)$. Its quiver is the Kronecker quiver $Q:2\Longrightarrow1$. 

The cluster variables $X_{n},n\in\Z$, are computed recursively via
the exchange relations
\begin{align*}
X_{n+2} & =  X_{n}^{-1}(1+X_{n+1}^{2}).
\end{align*}
The seeds are $t_{n}=((X_{n+1},X_{n+2}),B)$ for $n\in2\Z,$ and $t_{n}=((X_{n+2},X_{n+1}),-B)$,
for $n\in2\Z+1$.

In this example, we have $\bClAlg=\clAlg=\upClAlg=\kk[X_{n}]_{n\in\Z}$.
Its cluster monomials do NOT form a basis.
\end{example}

\subsection{Bangle basis (generic basis)}

By a bangle, we mean a multicurve $C$ without self-crossing or contractible
components. $[C]$ is called a bangle element.

It follows from the Skein relations for $\Sk(\Sigma)$ that any multicurve
$[D]$ can be written as a finite sum of bangle elements $[C]$. We
further have the following result.
\begin{thm}[\cite{musiker2013bases}\cite{thurston2014positive}]
\label{thm:bangle_basis}The bangle elements form a basis for $\Sk(\Sigma)$. 
\end{thm}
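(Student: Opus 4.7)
The proof naturally splits into a spanning claim and a linear independence claim, and the plan is to treat each in turn.

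For spanning, I would argue by induction on a complexity measure. Given a multicurve $D$, let $c(D)=(\text{total number of crossings},\text{number of contractible components})$, ordered lexicographically. If $D$ has any crossing (self or mutual), the crossing resolution of Figure \ref{fig:Skein-relations}(a) expresses $[D]$ as a sum of two diagram classes each with strictly smaller first coordinate. Once $D$ is crossing-free, any contractible closed loop component can be stripped off using the unknot-removal relation $[D]=-2\,[D\setminus\text{loop}]$, lowering the second coordinate; any monogon arc component kills $[D]$ outright via Figure \ref{fig:Skein-relations}(c). Iterating this procedure terminates and exhibits $[D]$ as a $\kk$-linear combination of bangle elements, so the bangles span $\Sk(\Sigma)$.

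For linear independence, fix a triangulation $\Delta$ of $\Sigma$ and the associated seed $t_\Delta$. Since $\Sk(\Sigma)\subset\bUpClAlg\subset\kk[\Mc(t_\Delta)]$, every bangle element $[C]$ has a Laurent expansion in the initial cluster $\{[\gamma_i]\mid\gamma_i\in\Delta\}$. I would use the explicit expansion formula of \cite{musiker2013bases}, which writes $[C]$ as a positive sum of Laurent monomials indexed by perfect matchings of an associated snake/band graph, and identify its extremal Laurent monomial (the unique monomial maximal in the dominance order to be introduced in Section \ref{sec:Tropical-properties-and-bases}). This extremal exponent records the geometric intersection numbers of $C$ with the arcs of $\Delta$, together with the homotopy data of the loop components. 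Because on an unpunctured surface a simple multicurve without contractible components is determined up to isotopy by its intersection pattern with a triangulation (and the winding of each loop component), two distinct bangles must yield two distinct extremal monomials. Distinct leading monomials then force any $\kk$-linear relation among bangles to be trivial.

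The main obstacle is controlling the extremal monomial in the presence of closed-loop components, since the unknot relation $[\text{unknot}]=-2$ can create subtle cancellations between the matching expansions of loops and arcs. To get around this, the plan is to first prove the analogous leading-monomial statement for the subfamily of \emph{bracelet} elements, where each loop is replaced by its bracelet version (concentric copies joined by crossings resolved in a single prescribed way); for bracelets the leading exponent is clean and matches the $g$-vector / theta-function description coming from \cite{gross2018canonical} and \cite{MandelQin2021}. The bangle elements are then related to the bracelet elements by a unitriangular change of basis governed by the Chebyshev-type identities between a loop and its $k$-fold concentric copy, and this unitriangularity propagates linear independence from bracelets to bangles. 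A purely topological alternative, carried out in \cite{thurston2014positive}, is to use intersection numbers with $\Delta$ directly as tropical coordinates on the space of integral laminations and invoke injectivity of the resulting combinatorial invariant, bypassing the cluster-algebraic machinery altogether.
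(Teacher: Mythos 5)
Your outline is sound, but note that the paper does not actually prove Theorem \ref{thm:bangle_basis}: it records only the spanning half (the sentence preceding the theorem, which is exactly your crossing-resolution/unknot-removal induction) and cites \cite{musiker2013bases} and \cite{thurston2014positive} for the rest. Measured against those references, your reconstruction is essentially the standard argument: spanning by reduction to crossingless diagrams without contractible components, and linear independence by showing that each bangle is pointed in the seed $t_\Delta$ and that the map sending a bangle to its dominance-maximal exponent is injective, the latter being the classical fact that integral laminations on an unpunctured marked surface are determined by their coordinates with respect to a triangulation (\cite{fomin2018cluster}, and Thurston's ``tropical coordinates'' route). Two remarks. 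First, your worry about cancellations coming from the unknot relation is moot: bangles by definition have no contractible components, so the factor $-2$ never enters their Laurent expansions; what you do need is that a product of pointed elements is pointed with additive degrees, which holds under the full rank assumption and disposes of the loop components without any detour. Second, your proposed detour through bracelets runs the logic backwards relative to the paper: there, Theorems \ref{thm:bracelet_basis} and \ref{thm:band_basis} are \emph{deduced from} Theorem \ref{thm:bangle_basis} via the unitriangular Chebyshev change of basis, so using bracelets to prove bangle independence would be circular unless you establish the bracelet leading-term statement independently (which the theta-function description of \cite{gross2018canonical} does provide, but at considerably greater cost). The one genuinely nontrivial input in either version --- injectivity of the lamination-to-leading-exponent map --- you correctly identify but do not prove; that is acceptable for a sketch at the level of this survey, since it is precisely the content of the cited sources.
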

The basis is called the bangle basis.

\cite{geiss2020generic} showed that such bangle elements are generic
basis elements for upper cluster algebras, see Section \ref{sec:generic_basis}.
\begin{example}
\label{example:bangle}We continue Example \ref{example:Annulus}. Let
$L$ denote the non-contractible simple loop in the annulus (unique
up to isotopy). The bangle elements are either the cluster monomials,
such as $[x_{1}]^{m_{1}}[x_{2}]^{m_{2}}[b_{1}]^{m_{3}}[b_{2}]^{m_{4}}$,
or the elements $[L]^{m_{1}}[b_{1}]^{m_{2}}[b_{2}]^{m_{3}}$, $m_{i}\geq0$.
See Figure \ref{fig:bangles}.

\begin{figure}
\caption{Bangles}
\label{fig:bangles}	
\begin{tikzpicture}[scale=0.25] 	
\draw[color=blue]  (0,0) circle (10); 	
\draw[color=blue]  (0,0) circle (4); 	
\node[color=blue] (b1) at (10.5,0) {$b_1$}; 	
\node[color=blue] (b2) at (3.5,0) {$b_2$}; 
\draw node[circle,fill,inner sep=0pt,minimum size=1pt] (vA) at (0,10) {A}; 
\draw node[circle,fill,inner sep=0pt,minimum size=1pt] (vB) at (0,4) {B}; 
\draw (0,10) .. controls (4,8) and (8,3) .. node[right]{$x_1$} (8,0); 	
\draw (8,0) .. controls (8,-5) and (8,-8) .. (0,-8); 	
\draw (0,-8) .. controls (-8,-8) and (-8,-5) .. (-8,0); 	
\draw (-8,0) .. controls (-8,3) and (-8,4) .. (0,4); 
\draw (vA) edge node[right]{$x_2$}  (vB);

%bangle
	\draw[color=red]  (0,0) circle (5); 	
\node[color=red] (b1) at (0,-7) {$L^2$}; 	\draw[color=red]  (0,0) circle (6);
\end{tikzpicture} 
\end{figure}
\end{example}

\subsection{Bracelet basis (theta basis)}

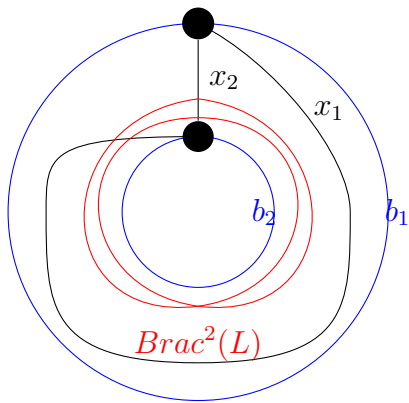
\begin{figure}
\caption{Bracelets}
\label{fig:bracelets}	
\begin{tikzpicture}[scale=0.25] 	
\draw[color=blue]  (0,0) circle (10); 	
\draw[color=blue]  (0,0) circle (4); 	
\node[color=blue] (b1) at (10.5,0) {$b_1$}; 	
\node[color=blue] (b2) at (3.5,0) {$b_2$}; 
\draw node[circle,fill,inner sep=0pt,minimum size=1pt] (vA) at (0,10) {A}; 
\draw node[circle,fill,inner sep=0pt,minimum size=1pt] (vB) at (0,4) {B}; 
\draw (0,10) .. controls (4,8) and (8,3) .. node[right]{$x_1$} (8,0); 	
\draw (8,0) .. controls (8,-5) and (8,-8) .. (0,-8); 	
\draw (0,-8) .. controls (-8,-8) and (-8,-5) .. (-8,0); 	
\draw (-8,0) .. controls (-8,3) and (-8,4) .. (0,4); 
\draw (vA) edge node[right]{$x_2$}  (vB);

	%bracelet 	
\draw[color=red] (0,-5) .. controls (-8,-6) and (-8,5) .. (0,6); 	\draw[color=red] (0,6) .. controls (8,5) and (8,-6) .. (0,-5); 	\draw[color=red] (0,5) .. controls (-7,5) and (-7,-4) .. (0,-5); 	\draw[color=red] (0,-5) .. controls (7,-4) and (7,5) .. (0,5); 	\node at (0,-7) {$\alert{\mathop{Brac}^2(L)}$};

\end{tikzpicture} 

\end{figure}

Assume that $C$ is a bangle. We can denote $C=\cup w_{i}C_{i}$,
$w_{i}\geq0$, such that $C_{i}$ and $C_{j}$ are non-isotopic curves,
and $w_{i}$ denote the multiplicity of $C_{i}$ appearing in $C$
(up to isotopy).

Then, for any closed loop $C_{i}$, we can replace $w_{i}C_{i}$ by
a closed loop having winding number $w_{i}$ and minimal self-crossings ($(w_{i}-1)$-many), denoted
by $\Brac^{w_{i}}(C_{i})$, see Figures \ref{fig:bracelets} and \ref{fig:diagram-permutation}. The new
multicurve $C'$ obtained from $C$ is called a bracelet, and $[C']\in\Sk(\Sigma)$
the bracelet element. The following result easily follows from the
analogous statement for the bangle basis (Theorem \ref{thm:bangle_basis}).
\begin{thm}[\cite{musiker2013bases}\cite{thurston2014positive}]
\label{thm:bracelet_basis}The bracelet elements form a basis for
$\Sk(\Sigma)$.
\end{thm}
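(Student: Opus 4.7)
The plan is to reduce Theorem \ref{thm:bracelet_basis} to the already-established bangle basis theorem (Theorem \ref{thm:bangle_basis}) by producing an explicit unitriangular change of basis between bracelet elements and bangle elements. Bracelets and bangles are indexed by the same combinatorial data, namely a finite collection of pairwise non-isotopic simple curves $C_i$ (arcs or closed loops) decorated with non-negative multiplicities $w_i$: in a bangle, $w_i$ disjoint parallel copies of $C_i$ appear, while in a bracelet every closed loop $C_i$ of multiplicity $w_i$ is replaced by the single self-crossing loop $\Brac^{w_i}(C_i)$, arcs being left unchanged. I would fix this bijection at the outset so that the two spanning sets have matching index sets.

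The key computation is the behavior of a single closed loop $C$. Applying the crossing-resolution Skein relation to one of the $(w-1)$ self-crossings of $\Brac^{w}(C)$, and using the unknot-removal relation to absorb any contractible component that appears, I would derive the Chebyshev-type recursion
\begin{align*}
[\Brac^{w+1}(C)] \;=\; [C]\cdot[\Brac^{w}(C)] \;-\; [\Brac^{w-1}(C)],
\end{align*}
with the initial conditions $[\Brac^0(C)]=1$ and $[\Brac^1(C)]=[C]$. By induction this yields
\begin{align*}
[\Brac^w(C)] \;=\; [C]^w + (\text{polynomial in }[C]\text{ of degree} < w),
\end{align*}
so that $[\Brac^w(C)]$ expands in the bangle basis as the bangle $w\cdot C$ plus bangles involving strictly fewer parallel copies of $C$.

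I would then extend this from a single loop to an arbitrary bracelet by multiplying these identities over the (disjoint, hence commuting) closed-loop components $C_i$ and tensoring with the arc components, which are common to the bracelet and the bangle. This gives, for any bangle $\gamma$, an expansion
\begin{align*}
[\text{bracelet}(\gamma)] \;=\; [\text{bangle}(\gamma)] \;+\; \sum_{\gamma' \prec \gamma} c_{\gamma',\gamma}\,[\text{bangle}(\gamma')],
\end{align*}
where $\prec$ is the partial order in which $\gamma'$ has the same arcs and the same underlying set of loop components as $\gamma$ but strictly smaller total loop winding. Refining $\prec$ to any total order, the transition matrix from bracelets to bangles is upper unitriangular with integer entries, hence invertible over $\kk$. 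Combined with Theorem \ref{thm:bangle_basis} this forces the bracelet elements to form a $\kk$-basis of $\Sk(\Sigma)$.

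The main obstacle is the Chebyshev recursion itself: one has to verify carefully that resolving one self-crossing of $\Brac^{w+1}(C)$ produces exactly the two diagrams isotopic to $C\cup\Brac^{w-1}(C)$ and $\Brac^{w}(C)\cup(\text{unknot or nothing})$, with the unknot contribution handled by the $-2$ from the monogon/unknot-removal relation to yield the sign and shape above. The rest of the argument is bookkeeping.
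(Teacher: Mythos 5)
Your approach is the same as the paper's: the paper dispatches this theorem with the remark that it "easily follows from the analogous statement for the bangle basis (Theorem \ref{thm:bangle_basis})," and the Chebyshev unitriangularity argument you spell out is precisely the mechanism that makes this "easy." The structure of your proof is sound.

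However, there is one concrete inaccuracy. You take the initial conditions of the recursion to be $[\Brac^0(C)] = 1$ and $[\Brac^1(C)] = [C]$. With the recursion $[\Brac^{w+1}(C)] = [C]\cdot[\Brac^w(C)] - [\Brac^{w-1}(C)]$, these initial conditions produce the Chebyshev polynomials of the \emph{second} kind $U_k$ (so you would get $[\Brac^2(C)] = [C]^2 - 1$). That is the recursion for the \emph{band} elements, not the bracelets. The paper's Chebyshev theorem gives $[\Brac^k(L)] = T_k([L])$, where $T_0(z) = 2$, and indeed $[\Brac^2(L)] = [L]^2 - 2$, which can be verified directly: resolving the unique self-crossing of $\Brac^2(L)$ gives the disjoint union $L^2$ in one smoothing and a lone contractible loop (contributing $-2$) in the other. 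So either one runs the recursion only for $w \geq 2$ with base cases $[\Brac^1(C)] = [C]$ and $[\Brac^2(C)] = [C]^2 - 2$, or one adopts the purely formal convention "$[\Brac^0(C)] = 2$" to match $T_0$; the empty diagram itself evaluates to $1$ and does not satisfy the recursion at $w=1$ with your initial data. Fortunately this slip does not damage the proof: both $T_k$ and $U_k$ are monic of degree $k$ for $k \geq 1$, so the claim $[\Brac^w(C)] = [C]^w + (\text{lower order})$ and the resulting upper-unitriangular transition matrix to the bangle basis hold either way, and Theorem \ref{thm:bangle_basis} still yields the conclusion.
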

The basis is called the bracelet basis.

\begin{example}
We continue Examples \ref{example:Annulus} \label{example:bracelet}.
The bangle elements are either the cluster monomials or the elements
$[\Brac^{m_{1}}(L)][b_{1}]^{m_{2}}[b_{2}]^{m_{3}}$, $m_{i}\geq0$.
See Figure \ref{fig:bangles}.
\end{example}

By \cite{MandelQin2021}, the (quantized) bracelet elements belong
to the (quantized) theta basis for the corresponding upper cluster
algebra.

\subsection{Chebyshev polynomials and bases}

Let us compute the bracelet elements $\Brac^{k}(L)$ in Example \ref{example:bracelet}
explicitly.
\begin{defn}
The Chebyshev polynomials of the first kind are polynomials $T_{k}(z)$,
$k\in\N$, in an indeterminate $z$, subject to the following recursive
definition:

\begin{align*}
T_{0}(z) & =2,\\
T_{1}(z) & =z,\\
T_{k+1}(z) & =zT_{k}(z)-T_{k-1}(z).
\end{align*}
\end{defn}

They have the following properties:

\begin{align*}
T_{k}(z)T_{l}(z) & =T_{k+l}(z)+T_{|k-l|}(z),\ \forall k,l\in\N \\
T_{k}(e^{x}+e^{-x}) & =e^{kx}+e^{-kx},\forall k\in\N.
\end{align*}
In particular, $\forall M\in SL_{2}$, its trace satisfies $\mathrm{tr}(M^{k})=T_{k}(\mathrm{tr}M)$.

\begin{thm}[\cite{musiker2013bases}]
For any $k\in\N$, we have $[\Brac^{k}(L)]=T_{k}([L])$.
\end{thm}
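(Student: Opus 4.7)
The plan is by induction on $k$. The key observation is that the Chebyshev recursion $T_{k+1}(z) = z\,T_{k}(z) - T_{k-1}(z)$ mirrors a Skein-algebra identity
\begin{equation*}
[L] \cdot [\Brac^{k}(L)] \;=\; [\Brac^{k+1}(L)] + [\Brac^{k-1}(L)] \qquad (\star)
\end{equation*}
valid for $k \geq 1$, where we adopt the convention $[\Brac^{0}(L)] = 2$ matching $T_{0}(z) = 2$. Granting $(\star)$ together with the base cases $[\Brac^{0}(L)] = 2 = T_{0}([L])$ and $[\Brac^{1}(L)] = [L] = T_{1}([L])$, a routine induction finishes the proof: if $[\Brac^{j}(L)] = T_{j}([L])$ for all $j \leq k$, then
\[
[\Brac^{k+1}(L)] \;=\; [L]\cdot[\Brac^{k}(L)] - [\Brac^{k-1}(L)] \;=\; [L]\,T_{k}([L]) - T_{k-1}([L]) \;=\; T_{k+1}([L]).
\]

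To establish $(\star)$ I work inside a small annular neighborhood $A$ of $L$. I represent $\Brac^{k}(L)$ by its standard model in $A$: a closed curve winding $k$ times around $A$ whose $(k-1)$ transverse self-crossings are packed into a single small arc. I represent the extra factor $L$ on the left of $(\star)$ by a parallel push-off of $L$ inside $A$, isotoped so that it crosses $\Brac^{k}(L)$ transversally at exactly two points forming a small bigon disjoint from the self-crossings of $\Brac^{k}(L)$. The element $[L]\cdot[\Brac^{k}(L)]$ is then represented by this union diagram. Applying the crossing-resolution Skein relation of Figure \ref{fig:Skein-relations} at one of the two bigon crossings expresses it as a sum of two smoothed diagrams. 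A direct inspection of the local picture shows that one smoothing merges the two components into a single closed curve wrapping $k+1$ times around $A$ with $k$ self-crossings, isotopic to $\Brac^{k+1}(L)$, while the other smoothing, after an isotopy eliminating the second (now-redundant) bigon crossing and an application of the unknot relation to absorb any contractible component produced in the process, yields a single closed curve wrapping $k-1$ times with $(k-2)$ self-crossings, isotopic to $\Brac^{k-1}(L)$.

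The main obstacle is the topological bookkeeping in the ``reducing'' smoothing: one must verify that the second crossing of the bigon together with the first smoothing cancels exactly one wrap of the merged curve, so the residual diagram has the correct winding number $k-1$, and that no extra scalar is introduced by stray contractible components. The cleanest way to carry this out is to pass to the universal cover $\mathbb{R} \times [0,1]$ of $A$, where both $L$ and $\Brac^{k}(L)$ lift to explicit families of horizontal arcs and the two smoothings become visibly isotopic to the lifts of $\Brac^{k\pm 1}(L)$, reducing $(\star)$ to a single local and combinatorial check.
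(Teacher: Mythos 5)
Your overall strategy is correct and matches the standard approach of \cite{musiker2013bases} and \cite{thurston2014positive}: one reduces the theorem to the Skein-algebraic recursion
\[
[L]\cdot[\Brac^{k}(L)] = [\Brac^{k+1}(L)] + [\Brac^{k-1}(L)] \qquad (k\geq 1),
\]
with $[\Brac^{0}(L)]=2$, and then the Chebyshev recursion for $T_k$ does the rest by an immediate induction. The identity $(\star)$ itself is correct, and the decision to prove it by introducing a bigon between a push-off of $L$ and $\Brac^{k}(L)$ and resolving one of the two crossings is a legitimate way to organize the computation.

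However, your account of the ``reducing'' smoothing contains a genuine gap. Recall that, in the present conventions, multicurves are taken \emph{up to isotopy fixing the crossings}, so a crossing can never be removed by an isotopy; it can only be removed by applying the crossing-resolution Skein relation. Thus the phrase ``an isotopy eliminating the second (now-redundant) bigon crossing'' cannot be made sense of. More concretely: after resolving one bigon crossing, the resulting diagram has $k$ self-crossings (the $k-1$ self-crossings of $\Brac^{k}(L)$ plus the surviving bigon crossing), not $k-2$. It is a connected curve of winding $k-1$, but it is \emph{not} isotopic to $\Brac^{k-1}(L)$: it has two extra crossings. To equate its class with $[\Brac^{k-1}(L)]$ one must apply the Skein relation twice more, and each step produces a contractible loop (so involves the value $-2$); the two resulting sign contributions of $-1$ each (the standard ``kink'' identity $[\mathrm{kink}]=[\mathrm{smooth}]+(-2)[\mathrm{smooth}]=-[\mathrm{smooth}]$) multiply to $+1$, which is why the plus sign in $(\star)$ comes out right. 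Your claim that ``no extra scalar is introduced by stray contractible components'' is therefore misleading: contractible components do appear and contribute crucially; they just conspire to give the coefficient $+1$. Even the base case $k=1$ illustrates the issue: the reducing smoothing gives a winding-zero figure-eight, which is not isotopic to anything crossing-free (and ``$k-2=-1$ self-crossings'' is nonsensical); its value $2$ is obtained only by a further Skein resolution, $[\mathrm{fig.\,8}]=-2+4=2$. Passing to the universal cover is a sensible way to keep track of winding numbers, but it does not remove the need for these additional Skein-relation applications and the associated sign bookkeeping, which is the genuinely nontrivial part of the argument.
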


In particular, we see that $[\Brac^{2}(L)]=[L]^{2}-2$ in Figure \ref{fig:bracelets}.

\cite{thurston2014positive} observed that one can associate the trivial
$k$-element permutation to the bangle $kL$ and a non-trivial permutation
for the bracelet $\Brac^{k}(L)$, see Figure \ref{fig:diagram-permutation}. Correspondingly, \cite{thurston2014positive}
proposed considering the formal average of all multicurves corresponding
to any $k$-element permutation. More precisely, $kL$ and $\Brac^{k}L$
only differ by a link diagram contained in a small rectangular neighborhood
(we project the link diagram on the surface and forget its orientation).
And such a link diagram represents a $k$-element permutation. By changing
the link diagrams, we obtain new multicurves, see \cite[Figure 1]{thurston2014positive}. 

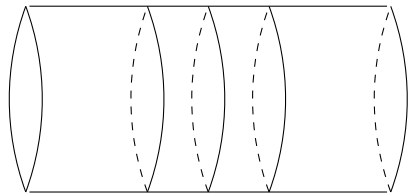
\begin{figure}
\caption{Diagrams and permutations}
\label{fig:diagram-permutation}
\subfloat[A bangle $3L$ represents the trivial permutation.]{
\begin{minipage}{
	   0.8\textwidth}
	   \centering
\begin{tikzpicture}[scale=0.80] 
\draw (5,-1.5) arc (-20:20:4.5);
\draw[dashed] (5,-1.5) arc (-160:-200:4.5);
\draw (-1,-1.5) arc (-160:-200:4.5);
\draw (-1,-1.5) arc (-20:20:4.5);

\draw (3,-1.5) arc (-20:20:4.5);
\draw (2,-1.5) arc (-20:20:4.5);
\draw (1,-1.5) arc (-20:20:4.5);
\draw[dashed] (3,-1.5) arc (-160:-200:4.5);
\draw[dashed] (2,-1.5) arc (-160:-200:4.5);
\draw[dashed] (1,-1.5) arc (-160:-200:4.5);

\node (v1) at (-1.12,1.58) {};
\node (v2) at (5.12,1.58) {};

\node (v3) at (-1.12,-1.5) {};
\node (v4) at (5.12,-1.5) {};

\draw  (v1) edge (v2);
\draw  (v3) edge (v4);

\end{tikzpicture}
\end{minipage}
}
\hfill

\subfloat[A bracelet $\Brac^3(L)$ represents a cyclic permutation.]{
\begin{minipage}[c]{
	   0.8\textwidth}
	   \centering
\begin{tikzpicture}[scale=0.80]
\draw (5,-1.5) arc (-20:20:4.5);
\draw[dashed] (5,-1.5) arc (-160:-200:4.5);
\draw (-1,-1.5) arc (-160:-200:4.5);
\draw (-1,-1.5) arc (-20:20:4.5);

%\draw (3,-1.5) arc (-20:20:4.5);
%\draw (2,-1.5) arc (-20:20:4.5);
%\draw (1,-1.5) arc (-20:20:4.5);
\draw[dashed] (3,-1.5) arc (-160:-200:4.5);
\draw[dashed] (2,-1.5) arc (-160:-200:4.5);
\draw[dashed] (1,-1.5) arc (-160:-200:4.5);

\node (v1) at (-1.12,1.58) {};
\node (v2) at (5.12,1.58) {};

\node (v3) at (-1.12,-1.5) {};
\node (v4) at (5.12,-1.5) {};

\draw  (v1) edge (v2);
\draw  (v3) edge (v4);

\node (v5) at (1,1.58) {};
\node (v6) at (2,1.58) {};
\node (v7) at (3,1.58) {};
\node (v8) at (1,-1.5) {};
\node (v9) at (2,-1.5) {};
\node (v10) at (3,-1.5) {};

\draw  plot[smooth, tension=.7] coordinates {(v8) (2.2,0) (v6)};
\draw  plot[smooth, tension=.7] coordinates {(v9) (3.2,0) (v7)};
\draw  plot[smooth, tension=.7] coordinates {(v10) (3,-0.5)(1.5,0.5) (v5)};
\end{tikzpicture}
\end{minipage}
}
\end{figure}

Let $\Band^{k}(L)$ denote the formal average of the multicurves corresponding
to all $k$-elements permutations and call it a band curve. Let us
compute the band element $[\Band^{k}(L)]$ explicitly.
\begin{defn}
The Chebyshev polynomials of the second kind are polynomials $U_{k}(z)$,
$k\in\N$, in an indeterminate $z$, subject to the following recursive
definition:

\begin{align*}
U_{0}(z) & =  1,\\
U_{1}(z) & =  z,\\
U_{k+1}(z) & =  zU_{k}(z)-U_{k-1}(z).
\end{align*}
\end{defn}

They have the following properties:

\begin{align*}
U_{k}(z)U_{l}(z) & =  U_{k+l}(z)+U_{k+l-2}(z)+\cdots+U_{|k-l|}(z),\ \forall k,l\in\N \\
U_{k}(e^{x}+e^{-x}) & =  e^{kx}+e^{(k-2)x}+\cdots+e^{-kx},\ \forall k\in N.
\end{align*}

\begin{thm}[\cite{thurston2014positive}]
For any $k\in\N$, we have $[\Band^{k}(L)]=U_{k}([L])$.
\end{thm}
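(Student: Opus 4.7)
The plan is to proceed by induction on $k$, showing that the elements $[\Band^{k}(L)]\in\Sk(\Sigma)$ satisfy the defining three-term recursion for the Chebyshev polynomials $U_{k}$ evaluated at $[L]$. Specifically, I would verify the three relations $[\Band^{0}(L)]=1$, $[\Band^{1}(L)]=[L]$, and $[L]\cdot[\Band^{k}(L)]=[\Band^{k+1}(L)]+[\Band^{k-1}(L)]$ for all $k\geq 1$, from which the identity $[\Band^{k}(L)]=U_{k}([L])$ follows at once by induction.

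The base cases will be immediate from the definition of $\Band^{k}(L)$ as the formal average of multicurves indexed by permutations in $S_{k}$: for $k=0$, the average is over the trivial group and corresponds to the empty multicurve, which represents $1\in\Sk(\Sigma)$; for $k=1$, only the identity permutation contributes, and its associated multicurve is the single loop $L$ itself.

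For the inductive step, I would interpret $[L]\cdot[\Band^{k}(L)]$ by pushing the extra copy of $L$ into a small annular neighborhood of the existing $k$ parallel loops, so that it becomes a $(k+1)$-st parallel strand. The resulting element is the formal average $\frac{1}{k!}\sum_{\sigma\in S_{k}}[D_{\sigma,*}]$, where $D_{\sigma,*}$ is the $(k+1)$-strand diagram whose restriction to the first $k$ strands realizes $\sigma$ and whose $(k+1)$-st strand runs uncrossed alongside. To reach the symmetric average over $S_{k+1}$, I would repeatedly apply the crossing-resolution Skein relation to commute the new strand past the others; each resolved crossing produces one term that contributes to $[\Band^{k+1}(L)]$ and one term where two strands are joined by a short arc which, after homotopy in the annular neighborhood together with the unknot-removal relation, reduces to a $(k-1)$-strand band diagram contributing to $[\Band^{k-1}(L)]$.

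The hard part will be verifying that the combinatorial bookkeeping yields precisely the coefficients $1$ on each of $[\Band^{k+1}(L)]$ and $[\Band^{k-1}(L)]$ with no extra terms, since the embedding $S_{k}\hookrightarrow S_{k+1}$ as the stabilizer of the last strand has index $k+1$ and the Skein resolutions must account for this asymmetry. A more conceptual route, which I expect to be cleaner, is to identify the annular neighborhood of $L$ with the Temperley-Lieb algebra on $k$ strands via the Kauffman bracket, under which the $S_{k}$-averaging becomes the Jones-Wenzl projector; the desired identity then follows from the well-known fact that the closure of the Jones-Wenzl projector evaluates to the Chebyshev polynomial of the second kind in the loop variable, combined with the Skein-algebraic trace sending each annular diagram to its image in $\Sk(\Sigma)$.
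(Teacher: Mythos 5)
The paper does not prove this theorem (it cites Thurston's article), so I compare your proposal against the argument in the literature. Your second, ``more conceptual'' route is the correct one and essentially reproduces Thurston's argument. The Skein relations given in Figure~\ref{fig:Skein-relations} are the Kauffman bracket specialized at $A=1$ (loop value $-2$), at which specialization $(1+e_i)^2 = 1+2e_i+e_i^2 = 1$; hence each braid generator becomes an involution in the Temperley--Lieb algebra $TL_k$, the unsigned permutation diagrams give a well-defined algebra map $\Q[S_k]\to TL_k$, and a direct check shows the image of the uniform symmetrizer $\frac{1}{k!}\sum_{\sigma\in S_k}\sigma$ is the Jones--Wenzl idempotent $f_k$. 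One point you should make explicit: the ``well-known fact'' you invoke must concern the \emph{annular} closure of $f_k$ around $L$ (each closure strand contributing $[L]$, not the scalar $-2$), since the closure in a disk merely returns the number $U_k(-2)$. The annular identity $\widehat{f}_k = U_k([L])$ is obtained by closing the Wenzl recursion for $f_{k+1}$ around $L$ and using the partial-trace property of $f_k$ to simplify the correction term, giving the three-term recursion $\widehat{f}_{k+1}=[L]\,\widehat{f}_k-\widehat{f}_{k-1}$ that you were aiming for in the first approach.

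That first approach, however, has a genuine gap as written. After expressing $[L]\cdot[\Band^{k}(L)]=\frac{1}{k!}\sum_{\sigma\in S_{k}}[D_{\sigma,*}]$, the added strand runs \emph{parallel} to all of the others and therefore has no crossings to resolve; ``applying the crossing-resolution Skein relation to commute the new strand past the others'' is not an available operation, and introducing crossings by an isotopy and then resolving them simply returns the original diagram. What is actually required is to apply the Skein relation in reverse --- replace a parallel pair of arcs by a crossing minus a cap-cup --- and then show that the resulting $\frac{1}{k!}$-weighted sums collapse to $[\Band^{k+1}(L)]+[\Band^{k-1}(L)]$. You correctly flag the index-$(k{+}1)$ bookkeeping as ``the hard part,'' but do not carry it out; carrying it out carefully is precisely re-deriving the Wenzl recursion sketched above, so the first route is a disguised version of the second rather than an independent argument, and in its current form the inductive step is not established. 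Make the Temperley--Lieb argument precise (specialization $A=1$, symmetrizer equals $f_k$, annular rather than disk closure) and you have the proof.
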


For example, we have $[\Band^{2}(L)]=[L]^{2}-1$ in Figure \ref{fig:bands}.

\subsection{Band basis (triangular basis/dual canonical basis)}

\begin{figure}
\caption{Bands}
\label{fig:bands}	
\begin{tikzpicture}[scale=0.25] 	
\draw[color=blue]  (0,0) circle (10); 	
\draw[color=blue]  (0,0) circle (4); 	
\node[color=blue] (b1) at (10.5,0) {$b_1$}; 	
\node[color=blue] (b2) at (3.5,0) {$b_2$}; 
\draw node[circle,fill,inner sep=0pt,minimum size=1pt] (vA) at (0,10) {A}; 
\draw node[circle,fill,inner sep=0pt,minimum size=1pt] (vB) at (0,4) {B}; 
\draw (0,10) .. controls (4,8) and (8,3) .. node[right]{$x_1$} (8,0); 	
\draw (8,0) .. controls (8,-5) and (8,-8) .. (0,-8); 	
\draw (0,-8) .. controls (-8,-8) and (-8,-5) .. (-8,0); 	
\draw (-8,0) .. controls (-8,3) and (-8,4) .. (0,4); 
\draw (vA) edge node[right]{$x_2$}  (vB);

	%bracelet 	
\node at (3,-5) {$\alert{\mathop{Band}^2(L)}=\frac{1}{2}(\mathop{Brac}^2(L)+L^2)$};

\end{tikzpicture} 

\end{figure}

Assume that $C$ is a bangle. We can denote $C=\cup w_{i}C_{i}$,
$w_{i}\geq0$, such that $C_{i}$ and $C_{j}$ are non-isotopic curves,
and $w_{i}$ denotes the multiplicity of $C_{i}$ appearing in $C$
(up to isotopy). Then, for any closed loop $C_{i}$, we replace $w_{i}C_{i}$
by the corresponding band curve $\Band^{w_{i}}(C_{i})$, see \cite[Figure 1]{thurston2014positive},
and obtain a formal average of multicurves $C'$. The corresponding
element $[C']\in\Sk(\Sigma)$, which will be
called a band element, is defined naturally. The following result easily follows from the
analogous statement for the bangle basis (Theorem \ref{thm:bangle_basis}).
\begin{thm}[\cite{thurston2014positive}]
\label{thm:band_basis}The band elements form a basis for $\Sk(\Sigma)$.
\end{thm}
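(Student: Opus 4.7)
The plan is to deduce the band basis from the bangle basis (Theorem \ref{thm:bangle_basis}) via a unitriangular change of basis, using the fact that the Chebyshev polynomials $U_k(z)$ have the form $z^k + (\text{polynomial in } z \text{ of degree} < k)$.

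First, I would set up a bijection between the indexing sets. Every bangle decomposes uniquely as $C=\bigcup_i w_i C_i$ with the $C_i$ pairwise non-isotopic simple curves (arcs or loops) and multiplicities $w_i\geq 1$; the corresponding band element $[\Band(C)]$ replaces each factor $[C_i]^{w_i}$ by $U_{w_i}([C_i])$ whenever $C_i$ is a closed loop, and leaves the arc factors untouched. So band elements and bangle elements are indexed by the same set.

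Next, I would record the expansion of the band element in the bangle basis. Using $[\Band^k(L)]=U_k([L])$ and the fact that the bangle element $[C]$ equals the commutative product $\prod_i [C_i]^{w_i}$ in $\Sk(\Sigma)$ (these factors do not cross, so the Skein relations degenerate to ordinary multiplication on disjoint unions), I can write
\begin{align*}
[\Band(C)] \;=\; \prod_{C_i \text{ arc}} [C_i]^{w_i}\;\cdot\;\prod_{C_j \text{ loop}} U_{w_j}([C_j]).
\end{align*}
Expanding each $U_{w_j}([C_j]) = [C_j]^{w_j}+\sum_{k<w_j}\alpha_{j,k}[C_j]^{k}$ shows that $[\Band(C)] = [C] + \sum_{C'} \lambda_{C,C'} [C']$, where each $C'$ appearing in the sum has the same arc components as $C$ and the same set of loop isotopy classes, but with at least one loop multiplicity $w_j'< w_j$.

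Now I would introduce a partial order on bangles: $C' \preceq C$ iff $C'$ and $C$ have the same arc components and the same loop isotopy classes, with $w_j'\leq w_j$ for each loop, and at least one strict inequality for $C'\prec C$. With respect to this order, the transition matrix from the band family to the bangle basis is unitriangular with $1$'s on the diagonal, hence invertible over $\kk=\Z$. Since the bangle elements form a $\kk$-basis of $\Sk(\Sigma)$ by Theorem \ref{thm:bangle_basis}, so do the band elements.

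The only real subtlety will be to make the above ordering argument rigorous: one must verify that the ``lower-order'' bangles produced by expanding $U_{w_j}$ are genuinely bangles (simple and without contractible components), which is clear since they use the same curves as $C$, just with smaller multiplicities. Once this is checked, the triangular argument is automatic; I do not foresee any other obstacle.
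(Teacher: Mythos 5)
Your proposal is correct and is essentially the argument the paper (and Thurston) has in mind when it says the result ``easily follows from the analogous statement for the bangle basis'': the monicity of the Chebyshev polynomials $U_k$ gives a unitriangular transition matrix from bands to bangles with respect to the componentwise order on loop multiplicities, and the lower sets are finite, so invertibility over $\Z$ is automatic. One small imprecision worth fixing: since $U_{w_j}$ has nonzero terms in degrees $w_j, w_j-2, \dots$, some loop components can drop to multiplicity zero in the expansion, so ``$C'$ has the same set of loop isotopy classes'' should read ``the loop isotopy classes of $C'$ form a subset of those of $C$, with multiplicities bounded above termwise''; with that adjustment the triangularity argument goes through unchanged.
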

The basis is called the band basis.

\begin{example}
We continue Examples \ref{example:Annulus} and \ref{example:bangle}\label{example:band}.
The band elements are either the cluster monomials or the elements
$[\Band^{m_{1}}(L)][b_{1}]^{m_{2}}[b_{2}]^{m_{3}}$, $m_{i}\geq0$.
See Figure \ref{fig:bangles}.
\end{example}

\begin{conjecture}
\label{conj:band_triangular}The (quantized) band basis coincides
with the common triangular basis in the sense of \cite{qin2017triangular}
(Section \ref{subsec:define_triangular-basis}), after localization
at the frozen variables. 
\end{conjecture}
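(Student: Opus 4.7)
The plan is to verify that the quantized band basis satisfies the defining axioms of the common triangular basis, as characterized in the forthcoming Section \ref{sec:triangular_basis}. Recall that the common triangular basis is the unique basis of the (localized) quantum upper cluster algebra which is bar-invariant and, for every seed $t \in \Delta^+$, unitriangular with respect to the PBW-type basis associated to $t$. Since the band basis, the common triangular basis, and the set of cluster monomials coincide on the sub-collection of bangles consisting only of arcs, the problem reduces to verifying the two axioms for band elements of the form $[\Band^{\bw}(\underline{L})]\cdot X^{\bm}$, where the $L_i$ are simple closed loops and $X^{\bm}$ is a monomial in arcs. Throughout, I would use the quantization of $\Sk(\Sigma)$ constructed in \cite{muller2016skein}, so that all constructions take place inside the quantum upper cluster algebra.

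First, I would establish bar-invariance. The quantized Skein algebra carries a natural bar involution fixing $[L]$ for each unoriented simple loop, and the Chebyshev polynomials of the second kind $U_k$ have integer coefficients symmetric in $q \leftrightarrow q^{-1}$ after the standard $v^{\Lambda/2}$-normalization. Thus $[\Band^k(L)] = U_k([L])$ is bar-invariant, and the corresponding property for products $[\Band^{\bw}(\underline{L})]\cdot X^{\bm}$ follows by an appropriate choice of scalar in $q^{\frac{1}{2}\Z}$.

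Next, and more delicately, I would verify the unitriangularity in each seed $t_\Delta$. Fix a triangulation $\Delta$ and expand $[\Band^{\bw}(\underline{L})]\cdot X^{\bm}$ in the cluster variables $\{[\gamma_i]\}$ by resolving each crossing of the underlying curves with the arcs of $\Delta$ via the Skein relations of Figure~\ref{fig:Skein-relations}. The leading term (under the dominance order of Section~\ref{sec:Tropical-properties-and-bases}) should correspond to the ``all-positive-resolution'' diagram, and the recursion $U_{k+1}(z) = z U_k(z) - U_{k-1}(z)$ matches inductively with an exchange-relation computation on the leading Laurent degrees. Crucially, the subtracted $U_{k-1}$ term is exactly what distinguishes $\Band^k(L)$ from $\Brac^k(L)=T_k([L])$: the $-2$ correction in the unknot-removal axiom produces precisely the correction needed for the triangular basis condition rather than the bracelet/theta basis condition.

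The main obstacle will be verifying unitriangularity in \emph{every} seed, not merely in the initial one, since mutations of triangulations in surfaces (including flips and iterated mutations reaching seeds not realized by triangulations) can be combinatorially intricate, and one needs uniform control over the dominance order. A feasible route is to establish compatibility of the band basis with mutation of quantum seeds in the style of \cite{qin2020dual}: once unitriangularity is checked for a single initial seed together with bar-invariance, one propagates it by showing that the band-element expansion is compatible with quantum mutations. Alternatively, one can exploit the identification of the bracelet basis with the theta basis \cite{MandelQin2021} together with the triangular Chebyshev change of basis $T_k = U_k - U_{k-2}$ to transfer known properties from the theta basis to the band basis; this should reduce Conjecture~\ref{conj:band_triangular} to a comparison between the theta basis and the common triangular basis on loop-type elements, which is the essential combinatorial content.
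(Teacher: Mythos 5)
This statement is a \emph{conjecture} in the paper, not a theorem, and the paper does not prove it. In fact, the remark immediately following Conjecture \ref{conj:band_triangular} states explicitly that ``the only known evidence for Conjecture \ref{conj:band_triangular} is the widely known Example \ref{example:Annulus},'' and that even in that one case the Chebyshev recursion for the dual canonical basis was established separately in \cite{Lamp10}, with the matching of frozen variables requiring the correction technique of \cite{Qin12}. So there is no proof in the paper to compare against, and your text should not be presented as a proof of the statement.

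As a strategy sketch, what you have written is a reasonable outline of how one might try to attack the conjecture, and you correctly identify the genuine obstruction: establishing degree-triangularity uniformly over \emph{every} seed $t\in\Delta^+$, not just over seeds realized by triangulations of $\Sigma$. But several things would need to be filled in before this could become a proof. First, your characterization of the common triangular basis (``bar-invariant and unitriangular against the PBW-type basis in every seed'') is not quite the one in Definition \ref{def:triangular_basis}: you omit the pointedness condition and the requirement that the basis contain the cluster monomials of $t$ and $t[1]$, and the triangularity condition in the paper is phrased as a unitriangularity under multiplication by $X_i(t)$ in the basis $\can^t$ itself, not as a change of basis against the distinguished functions $\Inj_g^t$ (these are compared only \emph{a posteriori} in the proof of Lemma \ref{lem:exist_tri_func} inside the formal completion). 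Second, the heuristic that the ``all-positive resolution'' diagram dominates and that the $U_{k+1}=zU_k-U_{k-1}$ recursion propagates the leading degrees would have to be made precise with a careful bookkeeping of the dominance order $\prec_t$ after each Skein resolution; this is exactly the kind of uniform control that the paper notes is unavailable. Third, your alternative route via $T_k=U_k-U_{k-2}$ and the bracelet--theta identification of \cite{MandelQin2021} reduces the conjecture to another open comparison (theta basis versus common triangular basis), so it transfers rather than resolves the difficulty. In short, the plan is plausible as a research program, but it contains the same open gap the paper records, and it should be labeled a conjecture, not a proof.
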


\begin{rem}
The only known evidence for Conjecture \ref{conj:band_triangular}
is the widely known Example \ref{example:Annulus}, see \cite[Remark 4.22]{thurston2014positive}.
For that case, the common triangular basis is the dual canonical basis
(after a change of frozen variables \cite[Section 4]{qin2020dual}\cite[Section 9]{Qin12}),
and the Chebyshev recursion was found for the dual canonical basis
\cite{Lamp10}.
\end{rem}

\section{Tropical properties and bases\label{sec:Tropical-properties-and-bases}}

\subsection{Cluster expansions and $g$-vectors\label{subsec:Cluster-expansions}}

Let there be given any initial seed $t=((X_{i}),(b_{ij}))$. Let us
define its Laurent monomials $Y_{k}:=X{}^{\col_{k}\tB}$ and $Y^{n}:=X{}^{\tB n}$
for any $n\in\N^{I_{\ufv}}$. $Y_{k}$ are called its $y$-variables.
\begin{thm}[\cite{FominZelevinsky07}\cite{DerksenWeymanZelevinsky09}\cite{Tran09}...\cite{gross2018canonical}]

For any given seed $t'\in\Delta^{+}$, the (quantum) cluster monomial $X(t')^{m}$ in $t'$ for any $m\in\N^{I}$
must have the following Laurent expansion in $\kk[\Mc(t)]$:

\begin{equation}
X(t')^{m}=X{}^{g}\cdot(\sum_{n\in\N^{I_{\ufv}}}c_{n}Y{}^{n})\label{eq:cluster_expansion}
\end{equation}
for some $c_{n}\in\kk$ and $g\in\Mc(t)$. Moreover, $c_{0}=1$.
\end{thm}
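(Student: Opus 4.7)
The plan is to induct on the length $\ell$ of a shortest mutation sequence $t = t_0 \to t_1 \to \cdots \to t_\ell = t'$. The base case $\ell = 0$ is immediate: $X(t')^m = X^m$, so the claimed expansion holds with $g = m$ and $c_0 = 1$ the only non-zero coefficient. For the inductive step, write $t' = \mu_k t''$ with $t''$ at mutation distance $\ell - 1$ from $t$. Since cluster monomials multiply (up to a $v$-power in the quantum case) in the twisted Laurent polynomial ring, it suffices to treat a single cluster variable $X_k(t')$. The exchange relation, read in the chart $t''$, gives
\begin{equation*}
X_k(t') = X(t'')^{a_-}\bigl(1 + Y_k(t'')\bigr), \qquad a_\pm := -f_k(t'') + \sum_{i}[\pm b_{ik}(t'')]_+ f_i(t''),
\end{equation*}
which is already of the desired form in $t''$. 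The task is therefore to re-express $X(t'')^{a_-}$ and $Y_k(t'') = X(t'')^{\col_k \tB(t'')}$ in the chart $t$ and to check that the shape $X^g\bigl(\sum_n c_n Y^n\bigr)$ is preserved under multiplication.

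Because $a_-$ and $\col_k\tB(t'')$ may have negative entries, the induction hypothesis does not apply to these monomials directly; the cleanest way around this is to first prove the statement in the principal-coefficient enlargement of the initial seed (adjoining a frozen block whose $B$-submatrix is the $I_\ufv \times I_\ufv$ identity). In that setting, every cluster variable has a well-defined $g$-vector in $\Mc(t)$ and a well-defined $F$-polynomial in the principal $y$-variables, and both are computed by an explicit recursion along the mutation sequence; this is the content of \cite{FominZelevinsky07,DerksenWeymanZelevinsky09} in the skew-symmetric case, extended to the skew-symmetrizable and quantum settings in \cite{Tran09}. The expansion in the original seed $t$ is then recovered from the principal-coefficient expansion via the separation-of-additions formula of \cite{FominZelevinsky07}, with $Y_k$ playing the role of the principal $y_k$; the specialization preserves the factorization $X^g \cdot F(Y)$ with $F(0) = 1$.

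The main obstacle is controlling the polynomial structure itself, namely verifying that only non-negative powers of $Y_k$ appear and that the constant term is precisely $1$. The crucial observation is that the two monomials $X(t'')^{a_+}$ and $X(t'')^{a_-}$ in the exchange relation have $g$-vectors (in $\Mc(t)$, after re-expression) differing exactly by $\col_k \tB$, so factoring out the ``tropically smaller'' of the two always leaves a binomial of the form $1 + Y_k^{\epsilon}(\cdots)$ with $\epsilon \in \{\pm 1\}$. Propagating this structure through products and along the mutation recursion is the technical heart of the argument, and it is what forces the coefficients $c_n$ to be supported on $\N^{I_\ufv}$ rather than on $\Z^{I_\ufv}$. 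A fully uniform and conceptual proof, valid also in the quantum setting, is provided by \cite{gross2018canonical}: cluster monomials are realized as sums over broken lines in the cluster scattering diagram, and the associated scattering functions are manifestly polynomial in the $Y$-variables with constant term $1$, delivering both the existence of the expansion and the normalization $c_0 = 1$ by construction.
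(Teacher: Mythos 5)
This theorem is stated in the paper purely as a citation---the paper gives no proof of its own, so there is nothing to match your argument against; it simply defers to \cite{FominZelevinsky07}, \cite{DerksenWeymanZelevinsky09}, \cite{Tran09} and \cite{gross2018canonical}. Your proposal is a fair summary of how the cited works establish the result: the principal-coefficient device, the $g$-vector/$F$-polynomial recursions, separation of additions, and finally the broken-line/scattering-diagram picture of \cite{gross2018canonical} in the general (skew-symmetrizable, quantum) case. That is consistent with the paper's intent.

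However, treated as a self-contained proof your inductive scheme has a real gap, which you half-acknowledge. After writing $X_k(t') = X(t'')^{a_-}\bigl(1 + Y_k(t'')\bigr)$, the induction hypothesis only gives you pointed $t$-expansions of the genuine cluster monomials $X_i(t'')^{m_i}$ with $m_i \ge 0$; it says nothing about the Laurent monomials $X(t'')^{a_-}$ and $Y_k(t'')$, whose $k$-th exponent is negative. Your fix---pass to principal coefficients, track $g$-vectors and $F$-polynomials along the mutation sequence, and invoke separation of additions---is exactly the content of \cite{FominZelevinsky07} and \cite{DerksenWeymanZelevinsky09}, but you state it without verifying the two key facts that make the recursion close: (i) that $F$-polynomials remain honest polynomials in $\N^{I_\ufv}$ with constant term $1$ after each mutation, and (ii) that the $g$-vector recursion is compatible with the tropical sign of the $k$-th $g$-component (this is where the proof in \cite{DerksenWeymanZelevinsky09} genuinely relies on positivity of $F$-polynomials, or, in \cite{gross2018canonical}, on consistency of the scattering diagram). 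Also, your ``$1 + Y_k^{\epsilon}(\cdots)$ with $\epsilon \in \{\pm 1\}$'' is imprecise: once the $g$-vector (the dominance-maximal degree) is factored out, the remaining factor always has the form $1 + (\text{terms in } \N^{I_\ufv}_{>0})$, so $\epsilon = +1$; the ambiguity you allow reflects not having pinned down which exchange-relation monomial is the dominant one after re-expression in $t$. In short, your reduction to the literature is appropriate for the survey context, but the ``technical heart'' you name is not resolved by your sketch, so this should be read as a pointer to the references rather than a proof.
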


Note that, under the full rank assumption, by \eqref{eq:cluster_expansion},
the cluster monomial $X(t')^{m}$ uniquely determines the vector $g$
(called its \textbf{extended $g$-vector}) and the polynomial $F=\sum_{n\in\N^{I_{\ufv}}}c_{n}Y^{n}$
with constant term $1$ (called its \textbf{$F$-polynomial}).

Let $\pr_{I_{\ufv}}:\Z^{I}\rightarrow\Z^{I_{\ufv}}$ denote the natural
projection. $\pr_{I_{\ufv}}g$ is called the principal $g$-vector.
\begin{example}
\label{exa:pointedness}In Example \ref{exa:quantum_cluster_SL3},
we have 

\[
X_{1}'=X_{1}^{-1}\cdot X_{3}\cdot(1+X_{2}\cdot X_{3}^{-1})=X^{(-1,0,1)^{T}}\cdot(1+Y_{1}),
\]
where $\cdot$ denotes the commutative product.
\end{example}

The following conjecture was formulated for the classical case in
\cite{FominZelevinsky02}, and its quantum version is also expected.
\begin{conjecture}[Positivity conjecture ]
\label{conj:positivity}The coefficients $c_{n}$ for cluster monomials
are contained in $\N$ (or in $\N[v^{\pm}]$ for the quantum case).
\end{conjecture}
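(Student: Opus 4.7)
The plan is to prove positivity by identifying each cluster monomial with an element of a basis that has manifestly non-negative Laurent expansions by construction. The natural candidate, well suited to the tropical framework developed later in the paper, is the theta basis of Gross--Hacking--Keel--Kontsevich. The strategy is to (i) build a scattering diagram in $\Mc(t)_\R$ associated to the initial seed $t$, (ii) attach a theta function $\vartheta_g$ to each lattice point $g \in \Mc(t)$ via a sum over broken lines, (iii) show that each cluster monomial $X(t')^m$ coincides with the theta function indexed by its extended $g$-vector, and (iv) read off positivity from the broken-line expansion.

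More concretely, I would first construct, for a given initial seed $t$, the consistent scattering diagram $\mathfrak{D}_t$ whose incoming walls are the hyperplanes $(\col_k \tB)^\perp$ carrying wall-crossings generated by the $y$-variables $Y_k$. Next, for each chamber $\sigma$ and each $g \in \Mc(t)$, define $\vartheta_g^\sigma$ as the sum over broken lines with asymptotic direction $-g$ ending at a generic base point in $\sigma$, each contributing its attached Laurent monomial weighted by products of wall-crossing coefficients. The key algebraic step is to show that when $g$ is the extended $g$-vector of a cluster monomial $X(t')^m$ and the base point is chosen in the cluster chamber of the initial seed $t$, one has $X(t')^m = \vartheta_g^{t}$; this is done by mutation induction, comparing both sides under a single mutation $\mu_k$ via the exchange relation \eqref{eq:exchange_relation} and the wall-crossing across the wall dual to $k$. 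Since each broken line contributes a monomial with coefficient that is a product of wall function coefficients, positivity of $c_n$ in \eqref{eq:cluster_expansion} reduces to positivity of the wall functions of $\mathfrak{D}_t$.

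The main obstacle is precisely the positivity of the scattering diagram itself, i.e.\ showing that all wall functions lie in $\N[[Y_1,\dots,Y_{|I_{\ufv}|}]]$ (respectively $\N[v^{\pm}][[\cdots]]$ in the quantum setting). In the skew-symmetric case this is the deepest input: it can be obtained from a categorification, either via Donaldson--Thomas invariants of the associated Jacobi algebra (the wall coefficients being counts of semistable objects), or equivalently via cohomological Hall algebras, from which Davison's theorem yields quantum positivity of the wall functions. For the skew-symmetrizable case not covered directly, I would reduce to the skew-symmetric case by the standard folding trick, unfolding $t$ to a skew-symmetric seed $\widetilde{t}$ with a group action, transporting the scattering diagram and the cluster monomials through the folding, and checking that positivity descends.

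An alternative route, more in the spirit of Sections \ref{sec:triangular_basis} and \ref{sec:generic_basis}, would be to realize cluster monomials either as classes of indecomposable rigid objects in a $2$-Calabi--Yau categorification (using the Caldero--Chapoton formula, which gives expansions with coefficients that are Euler characteristics, and their quantum lifts via Poincar\'e polynomials of quiver Grassmannians), or as simple modules in a monoidal categorification, where structure constants are dimensions of graded pieces and hence positive. In all three approaches, the crux is the same: one must produce a positive geometric or categorical model for a single cluster variable, and then propagate positivity through mutations. The scattering diagram approach has the advantage of being uniform and of treating coefficient specializations cleanly, which is why I would adopt it as the primary strategy.
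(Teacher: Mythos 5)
The statement in question is stated in the paper as a conjecture, not a theorem, and the paper does not prove it: the surrounding remark is a survey of what is known, attributing the classical skew-symmetric case to Lee--Schiffler, the classical skew-symmetrizable case to Gross--Hacking--Keel--Kontsevich via scattering diagrams and theta functions, the quantum skew-symmetric case to Davison (via Donaldson--Thomas / cohomological Hall algebra methods), and explicitly recording that the quantum skew-symmetrizable case \emph{remains open}. Your overall strategy---identify cluster monomials with theta functions, reduce positivity of Laurent coefficients to positivity of wall functions, and obtain wall-function positivity from DT/CoHA input in the skew-symmetric case---is indeed the GHKK/Davison route and correctly reproduces the known partial results. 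You are also right that this is a different route from Lee--Schiffler's elementary combinatorial argument for the skew-symmetric classical case, trading directness for uniformity and compatibility with coefficient specializations.

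However, your proposal contains a genuine gap, and it is precisely the one the paper flags. You claim to settle the skew-symmetrizable case ``by the standard folding trick, unfolding $t$ to a skew-symmetric seed $\widetilde t$ with a group action.'' This reduction does not work in general: not every skew-symmetrizable exchange matrix admits an unfolding to a skew-symmetric one with a compatible finite group action, and even when an unfolding exists, compatibility with mutation sequences and with the scattering-diagram/theta-function machinery has to be arranged and is not automatic. In fact GHKK do \emph{not} handle the classical skew-symmetrizable case by folding; they prove positivity of the (classical) scattering diagram directly, by an inductive perturbation argument at the level of wall functions, which applies to skew-symmetrizable data as such. At the quantum level no such direct positivity of wall functions is available for skew-symmetrizable data, the DT/CoHA input exists only in the skew-symmetric setting, and folding does not bridge the gap; that is exactly why the quantum skew-symmetrizable case is still open, as the paper states. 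So your argument would establish the known cases but not the full conjecture, and the step you would need to repair is the passage from skew-symmetric to skew-symmetrizable in the quantum scattering diagram.
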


\begin{rem}
First consider Conjecture \ref{conj:positivity} for the classical
case $\kk=\Z$. Partial results were due to \cite{MusikerSchifflerWilliams09}\cite{HernandezLeclerc09}\cite{Nakajima09}.
The conjecture was verified for skew-symmetric seeds by \cite{LeeSchiffler13}.
It was verified for all skew-symmetrizable seeds by \cite{gross2018canonical}.

Next consider the quantum case $\kk=\Z[v^{\pm}]$. Partial results
were due to \cite{Qin10} \cite{KimuraQin14}. The conjecture was verified
for skew-symmetric seeds by \cite{DavisonMaulikSchuermannSzendroi13}\cite{davison2016positivity}. 

Conjecture \ref{conj:positivity} remains open for skew-symmetrizable
quantum seeds.
\end{rem}

\subsection{Pointedness and dominance orders}

Let there be given any seed $t=((X_{i}),(b_{ij}))$ subject to the
full rank assumption. We are interested in the following type of Laurent
polynomials, which share a similar form with the cluster monomials
in \eqref{eq:cluster_expansion}.
\begin{defn}[Pointedness, dominance order \cite{qin2017triangular}]
Any element $Z\in \kk[\Mc(t)]$ contained in $ X{}^{g}\cdot(1+\Sigma_{n>0}\kk Y{}^{n})$, for some
$g\in\Mc(t)=\Z^{I}$, is said to be $g$-pointed. We define its
degree by $\deg Z=g$.

A subset $\boldsymbol{Z}=\{Z_{g}|g\in\Mc(t)\}\subset\kk[\Mc(t)]$
with $g$-pointed elements $Z_{g}$ is said to be $\Mc(t)$-pointed.
\end{defn}

In order to justify the definition of the degree for $Z$, we need
to introduce the following partial order on $\Mc(t)$ (viewed as the
lattice of Laurent degrees).
\begin{defn}
The dominance order $\prec_{t}$ on $\Mc(t)$ is defined such that we have
$\deg X(t)^{g}Y(t)^{n}\prec_{t}\deg X(t)^{g}$ $\forall n>0$. 

Equivalently, $g'\prec_{t}g$ if and only if there exists some $0\neq n\in\N^{I_{\ufv}}$
such that $g'=g+\tB n$.
\end{defn}

\begin{rem}
\label{rem:pointedness}The definitions of pointedness, degree, and
dominance order are inspired by representation theory. 

More precisely, inspired by the monoidal categorification of cluster
algebras \cite{HernandezLeclerc09} (see Section \ref{subsec:The-common-triangular-in-monoidal}),
we hope that $Z_{g}$ can be compared with the module character $\chi S(w)$
of a highest weight module $S(w)$ for some algebra (such as a quantum
affine algebra \cite{HernandezLeclerc09}). From this point of view, $g$ plays
the role of the highest weight $w$ for $S(w)$. Therefore, the $g$-pointed
element $Z_{g}$ should be parametrized by its degree $g$. 

Our dominance order is inspired by the dominance order for stratification
of graded quiver varieties in \cite{Nakajima09}.
\end{rem}

\subsection{Injective-reachability\label{subsec:Injective-reachability}}
\begin{defn}
A seed $t$ is injective-reachable if there is another seed $t[1]\in\Delta^{+}$
and a permutation $\sigma$ of $I_{\ufv}$, such that its cluster
variables $X_{\sigma k}(t[1])$, $k\in I_{\ufv}$, have the principal
$g$-vector $-f_{k}$ , where $f_{k}$ denote the $k$-th unit vector.
\end{defn}

An (upper) cluster algebra is said to be injective-reachable if its
seeds are. Unless otherwise specified, we make the following assumption
from now on.

\begin{Assumption}\label{assumption:injective-reachable}

The seeds are injective-reachable.

\end{Assumption}

We denote the cluster variable $I_{k}(t):=X_{\sigma k}(t[1])$. For
any $m\in\N^{I_{\ufv}}$, define the ordered product $I(t)^{m}:=v^{\alpha}\prod I_{k}(t)^{m_{k}}$
using the twisted product, where $\alpha\in\Z$ is chosen so that
this is a pointed element.
\begin{rem}
The injective-reachable assumption holds for (almost) all well-known
cluster algebras arising from representation theory or higher Teichm\"uller
theory (except for once-punctured closed surfaces), see \cite{qin2019bases}
for a list.

If a seed $t$ is injective-reachable, so are all seeds in $\Delta^{+}$
\cite{muller2015existence}\cite[Lemma 5.1.1]{qin2017triangular}.

A seed $t$ is injective-reachable if and only if there exists a green
to red sequence\emph{ }in the sense of \cite{keller2011cluster}.

When $t$ is skew-symmetric, one can construct the (completed) Jacobian
algebra $\widehat{J}$ from the corresponding quiver with potential
\cite{DerksenWeymanZelevinsky08,DerksenWeymanZelevinsky09}. Then
$t$ is injective-reachable if and only if the indecomposable injective
modules $I_{k}$ of $\widehat{J}$ can be constructed from the simple
modules by finitely many mutations, whence the name injective-reachable.
\end{rem}

\begin{example}
In Example \ref{exa:pointedness}, we have $\deg X_{1}'=(-1,0,1)^{T}\equiv-f_{1}\mod\Z f_{2}\oplus\Z f_{3}$.
Therefore, the injective-reachable assumption holds.
\end{example}

\subsection{Tropical properties\label{subsec:Tropical-properties}}

Let us define the piecewise linear map $\phi_{k}:\Mc(t)\simeq\Mc(\mu_{k}t)$
such that, for any $g=(g_{i})\in\Mc(t)=\Z^{I}$, its image $g'=(g'_{i})\in\Mc(\mu_{k}t)=\Z^{I}$
is given by

\begin{align*}
g_{i}'  =  \begin{cases}
-g_{k} & i=k\\
g_{i}+[b_{ik}]_{+}g_{k} & i\neq k,\ g_{k}\geq0\\
g_{i}+[-b_{ik}]_{+}g_{k} & i\neq k,\ g_{k}<0
\end{cases}.
\end{align*}
They are called tropical transformations. Then we have the following
result.
\begin{thm}[{\cite[Conjecture 6.10]{FominZelevinsky07}\cite{DerksenWeymanZelevinsky09}\cite{gross2018canonical}}]
If the the extended $g$-vector of a cluster monomial with respect
to the initial seed $t$ is $g$, then $\phi_{k}g$ is its extended
$g$-vector with respect to the initial seed $\mu_{k}t$.
\end{thm}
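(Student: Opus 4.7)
The plan is to substitute the $t$-variables in terms of the $\mu_k t$-variables inside the $t$-expansion of the cluster monomial and to extract the leading Laurent exponent with respect to the dominance order $\prec_{\mu_k t}$. By hypothesis, $X(t')^m = X(t)^g \cdot F(Y(t))$ with $F = 1 + \sum_{n > 0} c_n Y(t)^n$, so that $X(t')^m = \sum_n c_n X(t)^{g + \tB n}$. It suffices to prove that, after substitution, the maximal Laurent exponent (in $\prec_{\mu_k t}$) of the resulting expansion is exactly $\phi_k g$, and that its coefficient is $1$.

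The substitution is governed by $X_i(t) = X_i(\mu_k t)$ for $i \neq k$ and, by the exchange relation, the two equivalent factored forms
\[ X_k(t) = X(\mu_k t)^{-f_k + [b_k]_+}(1 + Y_k(\mu_k t)) = X(\mu_k t)^{-f_k + [-b_k]_+}(1 + Y_k(\mu_k t)^{-1}), \]
where $b_k = \col_k \tB(t)$. First I would establish the following lemma: for any $h \in \Mc(t)$ with $h_k \geq 0$, expanding $X(t)^h$ via the first form using the binomial theorem produces a Laurent polynomial in the $\mu_k t$-variables whose maximum exponent in $\prec_{\mu_k t}$ equals $-h_k f_k + h_k [b_k]_+ + \sum_{i \neq k} h_i f_i$, matching $\phi_k h$ on the branch $h_k \geq 0$; the symmetric statement using the second form handles $h_k \leq 0$. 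The proof is immediate, since the subleading binomial terms differ from the leading one by positive multiples of $\col_k \tB(\mu_k t) = -b_k$, hence are strictly dominated in $\prec_{\mu_k t}$.

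The main obstacle is that when $g_k$ and the $k$-th components of the subdegrees $g + \tB n$ have opposite signs, the two linear branches of $\phi_k$ would have to be applied to different monomials of $F$; moreover, an individual monomial $X(t)^{g + \tB n}$ fails to be a Laurent polynomial in the $\mu_k t$-variables when its $k$-th component is negative. To overcome this, one must invoke the \emph{sign coherence} property: all exponents $g + \tB n$ contributing to $F$ with nonzero coefficient have $k$-th component of the same sign as $g_k$ (or zero). Sign coherence was conjectured by Fomin--Zelevinsky and proved in the skew-symmetric case by \cite{DerksenWeymanZelevinsky09} via quivers with potential, and in general by \cite{gross2018canonical} via scattering diagrams; this is the deepest input. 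Once sign coherence is available, we apply the appropriate factored form of $X_k(t)$ uniformly, and the linearity of the selected branch of $\phi_k$ together with the fact that the constant term of each binomial expansion is $1$ shows that the resulting $\mu_k t$-expansion has maximum exponent $\phi_k g$ with leading coefficient $1$. This produces the cluster expansion $X(t')^m = X(\mu_k t)^{\phi_k g} F'(Y(\mu_k t))$ with $F'(0) = 1$, establishing that $\phi_k g$ is the extended $g$-vector of the cluster monomial with respect to $\mu_k t$.
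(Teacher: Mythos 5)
The paper offers no proof of this statement: it is imported from the literature (the $g$-vector recursion conjectured by Fomin--Zelevinsky, proved by Derksen--Weyman--Zelevinsky in the skew-symmetric case and by Gross--Hacking--Keel--Kontsevich in general), so there is no in-paper argument to compare yours against; I can only assess the sketch on its own terms. Your framing is right in spirit --- this is a deep statement whose known proofs all pass through some form of sign coherence --- but the specific sign-coherence claim your argument rests on is false, and without it the reduction collapses.

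You assert that all exponents $g+\tB n$ occurring with nonzero coefficient in the $t$-expansion of a cluster monomial have $k$-th component of the same sign as $g_k$. Already in the Kronecker case (Example \ref{example:Kronecker}) this fails: the cluster variable $X_4=X_2^{-1}(1+X_3^2)=X_2^{-1}+X_1^{-2}X_2^{-1}+2X_1^{-2}X_2+X_1^{-2}X_2^{3}$ is pointed at $g=(0,-1)$, yet its exponent vectors $(0,-1)$, $(-2,-1)$, $(-2,1)$, $(-2,3)$ have second components $-1,-1,1,3$ of both signs. So for $k=2$ you can neither apply one factored form of $X_k(t)$ uniformly nor apply a single linear branch of $\phi_k$ to all monomials; the monomials with negative $k$-th exponent are individually non-Laurent in the $\mu_k t$-variables, and Laurentness of the sum is restored only through cancellations that destroy your term-by-term leading-exponent bookkeeping. (Such cancellations can in principle even change which exponent is $\prec_{\mu_k t}$-maximal, so the conclusion cannot be read off monomial by monomial.) The sign coherence that actually enters the known proofs is that of the $c$-vectors, and passing from it to the $g$-vector recursion requires genuinely different machinery: the Nakanishi--Zelevinsky tropical duality between $C$- and $G$-matrices, the DWZ theory of mutations of decorated representations and their injective presentations, or the identification of cluster monomials with theta functions on scattering diagrams in \cite{gross2018canonical}. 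Your preliminary lemma (the binomial expansion of $X(t)^h$ for a single exponent $h$ of definite sign in position $k$) is correct, but the main reduction to it does not go through.
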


Denote the tropical semifield $\Z^{T}=(\Z,\max,+)$. We refer the
reader to \cite[Section 2]{gross2018canonical} for the basics of
tropicalization of cluster varieties $\AVar$ and $\XVar$ on $\Z^{T}$.
Let us give the following simplified definition, hiding all geometric
details.
\begin{defn}
The tropical points\footnote{These are the tropical points for the cluster Poisson variety $\XVar$
of the Langlands dual seed \cite{FockGoncharov09}\cite{gross2013birational}.} are the equivalence classes in $\sqcup_{t\in\Delta^{+}}\Mc(t)$,
such that $g\in\Mc(t)$ and $\phi_{k}g\in\Mc(\mu_{k}t)$ are equivalent. We denote the set of tropical points by $\tropMc$.
\end{defn}

Let $[g]$ denote the equivalence class of $g\in\Mc(t)$ in $\tropMc$.
It can be shown that $[g]$ has exactly one representative $g'$ in
$\Mc(t')$ for any $t'\in\Delta^{+}$. Recall that we have a natural identification $\tropMc(t)=\oplus_{i\in I} \Z f_i\simeq\Z^{I}$. Then we obtain $\tropMc\simeq \tropMc(t)\simeq\Z^{I}$ as sets. Note that, while $\tropMc$ does not depend on $t$, the isomorphism $\tropMc\simeq \Z^{I}$ does.
\begin{defn}
An element $Z\in\upClAlg$ is said to be $[g]$-pointed, or parametrized
by $[g]$, if for any $t'\in\Delta^{+}$, $Z$ is $g'$-pointed in
$\kk[\Mc(t')]$, where $g'$ is the representative of $[g]$ in $\Mc(t')$.

A set $Z=\{Z_{[g]}|\forall[g]\in\tropMc\}$ is said to be parametrized
by the tropical points if its elements $Z_{[g]}$ are parametrized
by the tropical points $[g]$.
\end{defn}

\begin{conjecture}[Fock-Goncharov conjecture \cite{FockGoncharov06a,FockGoncharov09}]
\label{conj:FG} The upper cluster algebra $\upClAlg$ possesses
a basis such that it is parametrized by the tropical points.\footnote{\cite{FockGoncharov06a,FockGoncharov09} further expected the basis
to be positive, i.e., its structure constants are non-negative.}
\end{conjecture}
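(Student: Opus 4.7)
The plan is to construct an explicit basis $\{Z_{[g]} : [g] \in \tropMc\}$ indexed by the tropical points, such that for every seed $t \in \Delta^+$ the element $Z_{[g]}$ is $g(t)$-pointed in $\kk[\Mc(t)]$, where $g(t)$ is the representative of $[g]$ in $\Mc(t)$. I would realize this via one of the three families advertised earlier in the paper. The most direct candidate, given Assumptions \ref{assumption:full_rank} and \ref{assumption:injective-reachable}, is the generic basis: fix an initial skew-symmetric seed $t_0$ (passing to a covering in the skew-symmetrizable case), form the associated completed Jacobian algebra $\widehat{J}$, and for each $g \in \Mc(t_0) \simeq \Z^I$ define $Z_g$ to be the Caldero--Chapoton character of a generic decorated representation of $\widehat{J}$ whose index equals $g$.

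The first step is to show that $Z_g \in \upClAlg$. By the Starfish Theorem it suffices to check Laurentness in each one-step mutation ring $\kk[\Mc(\mu_k t_0)]$, which follows from the mutation formula for Caldero--Chapoton characters together with generic vanishing of $E$-invariants. Next, I would establish linear independence and spanning of $\{Z_g\}_{g \in \Mc(t_0)}$. Linear independence is immediate from pointedness: the distinct leading Laurent monomials $X^g$ make the change-of-basis matrix from $\{Z_g\}$ to any other pointed collection upper unitriangular under the dominance order $\prec_{t_0}$. For spanning, given $Z \in \upClAlg$ I would take the $\prec_{t_0}$-maximal degree $g$ appearing in its Laurent expansion; subtracting a suitable scalar multiple of $Z_g$ strictly decreases this maximum. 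Since the set of degrees occurring is bounded above and $\prec_{t_0}$ is noetherian on any such bounded-above subset of $\Mc(t_0)$, iteration terminates and yields a finite expression $Z = \sum c_g Z_g$.

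The remaining step is tropical compatibility: the element produced from $t_0$ with input $g$ must coincide with the one produced from $\mu_k t_0$ with input $\phi_k g$. This reduces to the statement that mutation of decorated representations of $\widehat{J}$ implements the piecewise-linear map $\phi_k$ on indices, which is the core of Derksen--Weyman--Zelevinsky mutation theory. Once established for a single mutation, it propagates around the exchange graph, producing a well-defined assignment $[g] \mapsto Z_{[g]}$ on $\tropMc$ and exhibiting the desired basis.

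The main obstacle is the spanning step conjoined with tropical compatibility: together they force the generic characters to exhaust $\upClAlg$ and to be globally consistent across all seeds simultaneously. This is precisely where the injective-reachable assumption is indispensable, since it guarantees that $\widehat{J}$ admits enough rigid and generic objects to realize a pointed element of every index; dropping this assumption (for instance for once-punctured closed surfaces) leaves the conjecture genuinely open. In the quantum setting one must additionally track the twisted product and bar-involution, which is why one typically pivots to the common triangular basis of \cite{qin2017triangular,qin2019bases} or to the quantum theta basis from scattering diagrams \cite{gross2018canonical,davison2016positivity} to complete the analogous argument.
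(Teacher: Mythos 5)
The statement you are trying to prove is a \emph{conjecture}: the paper offers no proof of Conjecture \ref{conj:FG}, and indeed the remark immediately following it records that the theta basis fails to be a basis for $\upClAlg$ in general and that \cite{gross2018canonical} suggest the conjecture must be \emph{modified} in the general case. The conjecture as stated carries none of the standing hypotheses you invoke --- it is asserted for an arbitrary upper cluster algebra, not merely for skew-symmetric, injective-reachable ones satisfying the full rank assumption. So no argument built on the generic basis (or on any of the three families) can settle it as stated; what your outline targets is the special case that the paper records separately as Theorem \ref{thm:exist_generic_basis}, whose proof is exactly the route you sketch (Plamondon's tropical parametrization of generic cluster characters plus Theorem \ref{thm:tropical_imply_basis}).

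Even restricted to that special case, two steps of your proposal have gaps. First, ``passing to a covering in the skew-symmetrizable case'' is not available: the generic basis machinery (quivers with potential, Jacobian algebras, $E$-invariants) is only developed for skew-symmetric seeds, which is why Theorem \ref{thm:exist_generic_basis} is stated with that hypothesis. Second, your spanning argument asserts that the set of dominance-order degrees occurring in the Laurent expansion of an arbitrary $Z\in\upClAlg$ is bounded above and that the subtraction procedure terminates. This is the genuinely hard point: a priori the iteration could produce an infinite formal series in $\kk\llbracket\Mc(t_0)\rrbracket$ rather than a finite linear combination, and controlling this requires comparing supports across \emph{all} seeds simultaneously using the tropical compatibility and injective-reachability. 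That is precisely the content of \cite[Theorem 4.3.1]{qin2019bases}, which you are implicitly assuming rather than proving. If you cite that theorem (as the paper does), your argument for the special case is fine; as a self-contained proof it is incomplete, and as a proof of Conjecture \ref{conj:FG} in its stated generality it cannot succeed.
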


\begin{rem}
The \emph{theta basis $\Theta$} in \cite{gross2018canonical} provides
such a basis for many $\upClAlg$, but fails in the general case.
It was suggested in \cite{gross2018canonical} Conjecture \ref{conj:FG}
should be modified in general.
\end{rem}

\subsection{Bases with good tropical properties\label{subsec:Bases-with-good-tropical}}

We have seen three families of bases via topological models in Section
\ref{sec:Topological-models}. Any of them, if it exists, is known to
be parametrized by the tropical points (and thus verifies Conjecture
\ref{conj:FG}). Therefore, the tropical property in Conjecture \ref{conj:FG}
cannot uniquely determine a basis. 

In general, there exist infinitely many bases parametrized by the
tropical points: see \cite[Theorem 1.2.1]{qin2019bases} for a description
of the infinite space of all such bases. At this moment, the three
families of bases that we have seen in Section \ref{sec:Topological-models}
are the most well-known and interesting ones. See Section \ref{sec:Further-topics}
for a further discussion.

By the following lemma, any basis parametrized by the tropical
points contains all cluster monomials.
\begin{lem}[{\cite[Lemma 3.4.12]{qin2019bases}}]
\label{lem:cluster_monomial_tropical}Assume that the upper cluster algebra
$\upClAlg$ satisfies the injective-reachable assumption. If there
is an element $Z$ in $\upClAlg$ and a cluster monomial $M$, such
that they are both parametrized by the same tropical point, then $Z=M$.
\end{lem}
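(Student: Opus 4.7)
The plan is to pick a seed where $M$ takes its simplest form — a pure Laurent monomial — and then invoke the injective-reachable assumption to force $Z=M$. Since $M$ is a cluster monomial, there exists a seed $t_M\in\Delta^+$ in which $M=X(t_M)^m$ for some $m\in\N^I$. Because both $Z$ and $M$ are parametrized by the same tropical point $[g]$, both are $m$-pointed at $t_M$, so their difference takes the form
\begin{align*}
Z - M \;=\; X(t_M)^m \sum_{n>0} c_n\, Y(t_M)^n,
\end{align*}
a finite Laurent polynomial in $\kk[\Mc(t_M)]$ with no constant term. The goal reduces to showing that every $c_n$ vanishes.

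Suppose for contradiction that some $c_n\neq 0$. I would choose $n_0>0$ minimal in the componentwise partial order on $\N^{I_{\ufv}}$ among such indices, and set $g^*:=m+\tB(t_M)n_0$. By this minimality, $g^*$ is \emph{maximal} in the dominance order $\prec_{t_M}$ among the degrees of monomials appearing in $Z-M$ at $t_M$, and $g^*\prec_{t_M}m$ strictly.

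The next step is to transport this to the seed $t_M[1]$ supplied by Assumption \ref{assumption:injective-reachable}. Let $\phi$ denote the piecewise-linear tropical transformation along a mutation sequence from $t_M$ to $t_M[1]$, so $\phi(m)=g(t_M[1])$ is the representative of $[g]$ at $t_M[1]$. The crucial combinatorial input — essentially the defining content of injective-reachability — is the following dominance-reversal principle: for any $n>0$, one has $\phi(m+\tB(t_M)n)\succ_{t_M[1]}\phi(m)$ strictly. This can be verified by unpacking the piecewise-linear mutation formula, using that the injective cluster variables $I_k(t_M)=X_{\sigma k}(t_M[1])$ carry principal $g$-vectors $-f_k$ at $t_M$. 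Applied to $g^*$, it yields $\phi(g^*)\succ_{t_M[1]}g(t_M[1])$.

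Finally, I would check that the Laurent expansion of $Z-M$ at $t_M[1]$ actually carries the monomial $X(t_M[1])^{\phi(g^*)}$ with nonzero coefficient. This directly contradicts the fact that both $Z$ and $M$ are $g(t_M[1])$-pointed at $t_M[1]$, which forces every degree appearing in $Z-M$ there to be $\prec_{t_M[1]}g(t_M[1])$, and one concludes $Z=M$. The main obstacle lies precisely in this nonvanishing check: one must rule out cancellation of the distinguished contribution coming from $c_{n_0}X(t_M)^{g^*}$ against contributions from the lower-dominance monomials of $Z-M$ at $t_M$. The standard way to handle this is an induction along the mutation sequence from $t_M$ to $t_M[1]$, showing that tropical transformation tracks the dominance-maximal part of Laurent expansions of pointed quantities, so that the $n_0$-contribution remains isolated at the top throughout the sequence and cannot be annihilated.
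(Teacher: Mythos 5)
The paper only quotes this lemma from \cite[Lemma 3.4.12]{qin2019bases}, so I am judging your argument on its own logic and against the argument in that reference. The decisive problem is that your proof only ever uses the hypothesis that $Z$ is pointed at the \emph{two} seeds $t_M$ and $t_M[1]$ with tropically related degrees, and that information does not determine $Z$. In the coefficient-free Kronecker algebra of Example \ref{example:Kronecker}, take $M=X_1$ and $Z=X_1+X_3$. At $t_0$ one has $Z=X_1(1+Y_2+Y_1Y_2)$, so $Z$ is $(1,0)$-pointed, the same degree as $M$. At $t_0[1]=((X_3,X_4),B)$ one computes $X_1=X_3^{-1}+X_3^{-1}X_4^{-2}+2X_3X_4^{-2}+X_3^{3}X_4^{-2}$, which is $(3,-2)$-pointed with $(3,-2)=\phi((1,0))$, and since the extra term $X_3$ has degree $(1,0)\prec_{t_0[1]}(3,-2)$, the sum $Z$ is still $(3,-2)$-pointed there. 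Thus $Z$ and $M$ satisfy every hypothesis your argument invokes, yet $Z\neq M$; so no argument of this shape can close. (The lemma survives because $Z$ fails compatibility at the adjacent seed $\mu_2 t_0$, where its degree is $(-1,0)$ instead of the required tropical transform $(1,0)$.) The proof in \cite{qin2019bases} correspondingly has to exploit pointedness at further seeds — in effect it bounds the support of the $F$-polynomial of $Z$ at $t_M$ by comparing degrees across several seeds, and for $m\in\N^{I}$ that bound collapses the support to $\{0\}$.

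The same example also refutes your ``dominance-reversal principle'': with $m=(1,0)$ and $n=(0,1)$ one gets $m+\tB n=(-1,0)$ and $\phi((-1,0))=(1,0)$ (the degree of $X_3$ at $t_0[1]$), and $(1,0)\prec_{t_0[1]}(3,-2)=\phi(m)$ — the dominance order is preserved, not reversed. The tropical transformation is piecewise linear and reverses the order only on certain cones, so your choice of a componentwise-minimal $n_0$ gives no control over where $\phi(g^*)$ lands. Finally, the non-cancellation step that you correctly identify as the main obstacle cannot be repaired by ``tracking the dominance-maximal part along the mutation sequence'': a single Laurent monomial of $\kk[\Mc(t_M)]$, rewritten at an adjacent seed, is in general an infinite formal Laurent series (e.g.\ $X_1^{-1}=X_3(1+X_2^2)^{-1}$), and $\phi$ governs degrees of pointed elements of $\upClAlg$, not of individual monomials of an expansion, so there is no monomial-by-monomial transport of $c_{n_0}X(t_M)^{g^*}$ to a term of degree $\phi(g^*)$ at $t_M[1]$.
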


Moreover, the tropical property provides the following criterion for
verifying a basis.
\begin{thm}[{\cite[Theorem 4.3.1]{qin2019bases}}]
\label{thm:tropical_imply_basis}Assume that the upper cluster algebra
satisfies the injective-reachable assumption. Let $S=\{S_{[g]}|\forall[g]\}$
denote a subset of $\upClAlg$ whose elements $S_{[g]}$ are parametrized
by the tropical points $[g]$. Then $S$ is a basis of $\upClAlg$.
\end{thm}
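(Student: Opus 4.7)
The plan is to verify that $S$ is both linearly independent and spanning in $\upClAlg$, both arguments exploiting the pointed form $S_{[g]} = X^{g_{t_0}} + \sum_{m\prec_{t_0} g_{t_0}} \kappa_{[g],m}\, X^{m}$ of each basis element in a chosen initial seed $t_0$, where $g_{t_0}$ denotes the representative of $[g]$ in $\Mc(t_0)$.

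For linear independence, I would argue by extracting a leading term. Given a finite relation $\sum_{[g]\in J} c_{[g]} S_{[g]} = 0$ with all $c_{[g]}$ nonzero, pick $[g^{*}]\in J$ whose representative $g^{*}_{t_0}$ is $\prec_{t_0}$-maximal among $\{g_{t_0}:[g]\in J\}$. Any other $S_{[g]}$ with $[g]\in J$ can produce the monomial $X^{g^{*}_{t_0}}$ only if $g^{*}_{t_0}\prec_{t_0}g_{t_0}$, contradicting maximality. Hence the coefficient of $X^{g^{*}_{t_0}}$ in the relation equals $c_{[g^{*}]}$, forcing $c_{[g^{*}]}=0$, a contradiction.

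For spanning, I would run an iterative reduction. Starting from $Z\in\upClAlg$ with finite-support Laurent expansion $\sum_{m\in F}c_m X^m$ in $t_0$, pick a $\prec_{t_0}$-maximal $g\in F$ and replace $Z$ by $Z-c_g S_{[g]}\in\upClAlg$. Pointedness of $S_{[g]}$ guarantees that the monomial $X^{g}$ is killed and that only new monomials with degree $\prec_{t_0}g$ are introduced. Iterating yields a candidate expansion $Z = \sum a_{[g]} S_{[g]}$, and Lemma \ref{lem:cluster_monomial_tropical} confirms that at each stage the leading piece we peel off is uniquely determined by its tropical label.

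The main obstacle will be proving that this reduction terminates, i.e.\ that only finitely many $a_{[g]}$ are nonzero. Since $\prec_{t_0}$ admits infinite descending chains on $\Mc(t_0)$ in general, pointedness alone does not suffice. The injective-reachable assumption should enter through the dual cluster $t_0[1]$: its cluster variables $I_k(t_0)$ have principal $g$-vectors $-f_k$ in $t_0$, furnishing a second pointed structure which bounds the Laurent support of any $Z\in\upClAlg$ in a direction transverse to $\prec_{t_0}$ and thereby confines the subtraction algorithm to a finite region. Equivalently, one could invoke the existence of some already-known reference basis parametrized by tropical points (e.g.\ the theta basis of \cite{gross2018canonical}, available under our assumptions) and argue that the transition matrix between $S$ and that reference basis is unipotent with respect to $\prec_{t_0}$, hence invertible with finitely supported columns via a completion-and-reduction argument.
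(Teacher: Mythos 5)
Your skeleton is the right one and the linear-independence half is complete: in a finite relation, picking a $\prec_{t_0}$-maximal degree and reading off the coefficient of the corresponding leading monomial does the job, because each $S_{[g]}$ is $g_{t_0}$-pointed with leading coefficient $1$ and distinct tropical points have distinct $t_0$-representatives.

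The spanning half, however, has a genuine gap, and it is exactly where you say it is. Your iterative subtraction is sound at each step (it produces an element of $\upClAlg$ with finite Laurent support), but since $\prec_{t_0}$ admits infinite strictly decreasing chains in $\Mc(t_0)$, nothing in the argument as written prevents the process from running forever. Saying that injective-reachability \emph{should} bound the support via $t_0[1]$ is the correct intuition, but it is the whole content of the theorem and cannot be left as a speculation: one must actually prove a \emph{support bound} to the effect that any element of $\upClAlg$ which is pointed simultaneously in $t_0$ and in $t_0[1]$ has Laurent support in $\Mc(t_0)$ confined to a bounded polyhedral region. That is a nontrivial statement about how Laurent supports transform under mutation, and it is where injective-reachability genuinely enters (it is precisely the theorem that the paper's cited reference proves before deducing the basis statement). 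Your alternative route via the theta basis does not bypass the issue: writing $S_{[g]}=\theta_{[g]}+\sum_{h\prec_{t_0} g}b_{[g],[h]}\theta_{[h]}$ gives a unitriangular transition with finitely supported columns, but a unitriangular operator of this type need not have an inverse with finitely supported columns when the order admits infinite descending chains, so you are again forced to prove the same finiteness/support result. Finally, the appeal to Lemma~\ref{lem:cluster_monomial_tropical} inside the reduction is misplaced: it identifies cluster monomials with tropical labels, which plays no role in the subtraction step; the uniqueness of the leading coefficient at each stage already follows from pointedness alone.
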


Theorem \ref{thm:tropical_imply_basis} immediately implies the existence
of the generic basis, see Section \ref{sec:generic_basis}.

\section{The triangular bases from quantum groups and monoidal categories\label{sec:triangular_basis}}

We assume the injective-reachable assumption throughout this section.

\subsection{Triangular basis\label{subsec:define_triangular-basis}}
\begin{defn}[Triangular basis \cite{qin2017triangular}]
\label{def:triangular_basis}Consider the quantum case $\kk=\Z[v^{\pm}]$.
For any seed $t\in\Delta^{+}$, a triangular basis $\can^{t}$ of
$\upClAlg$ is a $\kk$-basis such that
\begin{itemize}
\item $\can^{t}$ contains the quantum cluster monomials in $t,t[1]$.
\item (pointedness) $\can^{t}=\{\can_{g}^{t}|g\in\Mc(t)\}$ such that $\can_{g}^{t}$
are $g$-pointed.
\item (bar-invariance) $\can_{g}^{t}$ are invariant under the bar-involution
$\overline{v^{\alpha}X(t)^{m}}=v^{-\alpha}X(t)^{m}$.
\item (degree triangularity) $\forall i\in I$, $\exists\alpha\in\Z$, such
that
\[
v^{\alpha}X_{i}(t)*\can_{g}^{t}\in\can_{f_{i}+g}^{t}+\sum_{g'\prec_{t}g+f_{i}}v^{-1}\Z[v^{-1}]\can_{g'}^{t}.
\]
\end{itemize}
\end{defn}

Definition \ref{def:triangular_basis} can be generalized for subalgebras\footnote{One might need to restrict to subalgebras of an upper cluster algebra
when Conjecture \ref{conj:FG} fails, see \cite{gross2018canonical}\cite{zhou2020cluster}.} of the upper cluster algebra, see \cite[Section 6.2]{qin2020dual}.
Here we only consider it for upper cluster algebras for simplicity.
\begin{lem}[{\cite[Lemma 6.1.4]{qin2020dual}}]
\label{lem:exist_tri_func}For any $t\in \Delta^+$, the triangular basis $\can^{t}$
is unique if it exists. \label{thm:triangularL_basis_formal_series}
\end{lem}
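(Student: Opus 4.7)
The plan is to prove uniqueness via an inductive Kazhdan--Lusztig style argument, with cluster monomials serving as anchors and the degree triangularity propagating equality. Suppose $\can^t$ and $(\can')^t$ are two triangular bases of $\upClAlg$ at $t$. Since both contain the quantum cluster monomials in $t$ and in $t[1]$ by Definition \ref{def:triangular_basis}, and each such cluster monomial is uniquely specified by its degree as a bar-invariant pointed Laurent polynomial in $\upClAlg$, the two bases agree on the cone of degrees of $t$- and $t[1]$-cluster monomials.

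For a general degree $g \in \Mc(t)$, I would fix a well-founded refinement of the dominance order $\prec_t$ and inductively establish $\can_g^t = (\can')_g^t$. In the inductive step, one chooses $h$ such that $g = h + f_i$ for some $i \in I$ with $h$ already covered (feasible because iterated $+f_i$-increments from $t[1]$-anchors, whose principal $g$-vectors lie in $-\N^{I_{\ufv}}$, reach every principal $g$-vector in $\Z^{I_{\ufv}}$, while frozen components are free by invertibility of frozen variables). The degree triangularity then yields
\[
v^{\alpha_i} X_i(t) * \can_h^t = \can_g^t + \sum_{g' \prec_t g} c_{g', i}(v)\, \can_{g'}^t,
\]
and the analogous identity for $(\can')^t$ with coefficients $c'_{g', i}(v)$, all lying in $v^{-1}\Z[v^{-1}]$. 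By the inductive hypothesis the two left-hand sides coincide and $\can_{g'}^t = (\can')_{g'}^t$ for $g' \prec_t g$, so subtracting yields $\can_g^t - (\can')_g^t$ as a $v^{-1}\Z[v^{-1}]$-linear combination of bar-invariant basis elements $\can_{g'}^t$. Since $\can_g^t - (\can')_g^t$ is itself bar-invariant, each scalar coefficient is simultaneously bar-invariant and in $v^{-1}\Z[v^{-1}]$, and therefore vanishes, giving $\can_g^t = (\can')_g^t$.

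The main obstacle is arranging the well-founded refinement of $\prec_t$: the dominance order on $\Mc(t) = \Z^I$ is not a priori noetherian, and the propagation step $h \mapsto h + f_i$ need not respect $\prec_t$ directly. A workable refinement combines a linear functional $\ell$ on $\Mc(t)$ satisfying $\ell(\tB n) < 0$ for $n \in \N^{I_{\ufv}} \setminus \{0\}$ (so that $g' \prec_t g$ implies $\ell(g') < \ell(g)$) with a secondary order placing $h$ before $g = h + f_i$ along anchoring chains. The finite Laurent support of each pointed element ensures that every inductive step involves only finitely many $g'$, making the recursion effective.
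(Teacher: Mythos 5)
Your three mechanisms --- anchoring at the cluster monomials of $t$ and $t[1]$, propagating along $+f_i$-steps via degree triangularity, and killing coefficients that are simultaneously bar-invariant and in $v^{-1}\Z[v^{-1}]$ --- are exactly the right ones, and that last cancellation step is carried out correctly. The gap is in the organization of the induction, and you have correctly located it without resolving it. To prove $\can^t_g=(\can')^t_g$ you need two kinds of predecessors at once: the chain predecessor $h$ with $g=h+f_i$, and \emph{all} of the finitely many $g'\prec_t g$ occurring in the two triangularity expansions. A linear functional $\ell$ refining $\prec_t$ must satisfy $\ell(\col_k\tB)<0$ for every $k\in I_{\ufv}$, and this is incompatible with $\ell(f_i)>0$ for all $i$: already for the Kronecker seed $B=\left(\begin{smallmatrix}0&-2\\2&0\end{smallmatrix}\right)$ one is forced to take $\ell(f_2)<0<\ell(f_1)$, so the chain step $h\mapsto h+f_2$ \emph{decreases} $\ell$ while its error terms also have strictly smaller $\ell$. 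No secondary order repairs this, because the degrees $g'\prec_t g$ are in general not on the chain of $g$: to cover them you must restart a chain from \emph{their} anchors, whose intermediate degrees (and hence whose own error terms) bear no relation to $g$. The resulting dependency relation is not shown to be well-founded, and there is no evident monotone quantity making it so; as written, the recursion does not terminate.

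This is precisely why the paper does not argue inside $\upClAlg$. It introduces the distinguished functions $\Inj^t_g=v^{\alpha}X^m*X(t)^{m'}*I(t)^{m''}$ (your anchors multiplied all the way up in a single stroke), observes that they form a topological basis of the formal completion $\kk\left\llbracket\Mc(t)\right\rrbracket$, and applies Lusztig's lemma there: the recursion determining the coefficients of the candidate $S_g$ on the $\Inj^t_{g'}$ runs over the dominance-order intervals $\{g'':g'\preceq_t g''\preceq_t g\}$, which are finite under the full rank assumption, so that recursion \emph{is} well-founded. Uniqueness of the solution $S$ in the completion then forces $\can^t=S$ for any triangular basis. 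To salvage your argument you would have to either pass to this interval-indexed recursion in the completion, or prove directly that your dependency graph admits no infinite descending chains; the latter is not established by the proposed refinement of $\prec_t$.
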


\begin{proof}
Although we do not have a dual PBW basis for $\upClAlg$, we can construct
an analogous set: for any $g\in\Mc(t)$, we can always construct a
unique $g$-pointed element $\Inj_{g}^{t}$ of the form $v^{\alpha}X^{m}*X(t)^{m'}*I(t)^{m''}$,
for some $\alpha\in\Z$, $m\in\Z^{I_{\fv}}$, $m'=[\pr_{I_{\ufv}}g]_{+}$
and $m''=[-\pr_{I_{\ufv}}g]_{+}$, where $\pr_{I_{\ufv}}g$ is the
principal part of $g$.

The set $\{\Inj_{g}^{t}|\forall g\}$ is NOT a basis for $\upClAlg$.
Nevertheless, with the help of this set and the dominance order, there
is a unique solution $S$ in the formal completion $\kk\left\llbracket \Mc(t)\right\rrbracket :=\kk[\Mc(t)]\otimes_{\kk[Y_{k}|k\in I_{\ufv}]}\kk\llbracket Y_{k}|k\in I_{\ufv}\rrbracket$
via Lusztig's lemma (see \cite[Lemma 8.4]{Nakajima04} \cite[7.10]{Lusztig90}),
such that $S$ satisfies the conditions in Definition \ref{def:triangular_basis}.
See \cite[Theorem 6.1.3]{qin2020dual} for an elementary calculation.
Therefore, if $\can^{t}$ exists, $\can^{t}=S$.
\end{proof}
The $g$-pointed elements $\Inj_{g}^{t}$ in the proof of Lemma \ref{lem:exist_tri_func}
will be called the distinguished functions. They form a topological
basis for the ring of the formal Laurent series $\kk\left\llbracket \Mc(t)\right\rrbracket $,
see \cite[Section 4.1]{qin2019bases}\cite[Section 2.2.2]{davison2019strong}.
\begin{rem}
Unfortunately, we cannot verify the existence of $\can^{t}$ by elementary
calculation, because we only have a candidate in the ring of the formal
Laurent series (see the proof of Lemma \ref{def:triangular_basis}),
and it is hard to tell if the candidate only consists of Laurent polynomials.
\end{rem}

\begin{rem}
The notion of the triangular basis was first proposed by \cite{BerensteinZelevinsky2012}
for \emph{acyclic seeds} (i.e. seeds corresponding to \emph{valued
quivers} without oriented cycles). Their definition is very different
from ours. It was shown in \cite{qin2019compare,qin2020dual} that
their triangular basis equals ours, when the seed is acyclic. 
\end{rem}

\begin{defn}[Common triangular basis]
\label{def:common_triangular_basis} Take $\kk=\Z[v^{\pm}]$. A $\kk$-basis
$\can$ of $\upClAlg$ is said to be the common triangular basis if
$\can$ is the triangular basis $\can^{t}$ for all seeds $t\in\Delta^{+}$.

Note that the common triangular basis in Definition \ref{def:common_triangular_basis}
contains all quantum cluster monomials. By \cite[Proposition 6.4.3]{qin2020dual},
it is parametrized by the tropical points. 
\end{defn}

\begin{rem}
Definition \ref{def:common_triangular_basis} is simpler than but
equivalent to the original definition in \cite{qin2017triangular}, which
imposes the extra condition that $\can$ is parametrized by the tropical
points.
\end{rem}

\subsection{From the dual canonical basis to the common triangular basis\label{subsec:From-dual-canonical-to-triangular}}

\subsubsection*{Cluster structure on quantum groups}

Following the convention of \cite{Kimura10}\cite{kimura2017twist},
we generalize the $\mathfrak{sl}_{3}$ example in Section \ref{subsec:cluster_q_gp_example}
to any Kac-Moody algebra $\frg$ and any element $w$ in its Weyl
group $W$.

As before, we choose any reduced word $\ow$. Then we can construct
the quantum unipotent subgroup $\qO[N_{-}(w)]$ using the dual PBW
basis. It is a quantum analog of the ring of functions $\C[N_{-}(w)]$
for the unipotent subgroup $N_{-}(w)=N_{-}\cap wNw^{-}$. Note that,
for $\frg$ a semi-simple Lie algebra and $w_{0}$ the longest element,
$N_{-}(w)=N_{-}$.

The $q$-center of $\qO[N_{-}(w)]$ is defined as 
\begin{align*}
Z_{q}  =  \{P\in\qO[N_{-}(w)]|\forall x\in\qO[N_{-}(w)],\exists\alpha\in\Z\text{ such that }Px=q^{\alpha}xP\}.
\end{align*}
In fact, it corresponds to the monoid generated by the frozen variables
of the corresponding quantum cluster algebra.

The quantum unipotent cell $\qO[N_{-}^{w}]$ is defined as the localization
of $\qO[N_{-}(w)]$ at the $q$-center. It is a quantum analog of
the ring of functions $\C[N_{-}^{w}]$ for the unipotent cell $N_{-}^{w}=N_{-}\cap BwB$.

It is known that $\qO[N_{-}(w)]$ has the dual canonical basis $\dCan$.
Then the dual canonical basis for $\qO[N_{-}^{w}]$ is defined as
the localization of $\dCan$ at the $q$-center:
\begin{align}
\{q^{\alpha}bP^{-1}|b  \in  \dCan,P\in Z_{q}\},\label{eq:localization}
\end{align}
where the factor $q^{\alpha}$ is chosen such that $q^{\alpha}bP^{-1}$
is invariant under the dual bar involution. See \cite{Kimura10}\cite{kimura2017twist}
for more details.

Following \cite{GeissLeclercSchroeer10,GeissLeclercSchroeer11}\cite{GY13,goodearl2020integral},
we have a natural quantum seed $t_{0}=t_{0}(\ow)$ associated to $\ow$.
\begin{thm}[\cite{GeissLeclercSchroeer10,GeissLeclercSchroeer11}\cite{goodearl2016berenstein,goodearl2020integral}]
\label{thm:quantum_gp_cluster}Consider quantum cluster algebras
for the quantum case $\kk=\Z[v^{\pm}]=\Z[q^{\pm\frac{1}{2}}]$. We
have natural isomorphisms between $\Q(q^{\frac{1}{2}})$-algebras:
\begin{align*}
\qO[N_{-}(w)]\otimes\Q(q^{\frac{1}{2}}) & \simeq  \bClAlg(t_{0}(\ow))\otimes\Q(q^{\frac{1}{2}}),\\
\qO[N_{-}^{w}]\otimes\Q(q^{\frac{1}{2}}) & \simeq  \clAlg(t_{0}(\ow))\otimes\Q(q^{\frac{1}{2}}).
\end{align*}
\end{thm}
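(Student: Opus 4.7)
My plan is to produce the isomorphism by constructing an explicit initial quantum seed $t_0(\ow)$ inside $\qO[N_-(w)]$, verifying that its iterated mutations still land in $\qO[N_-(w)]$, and then checking that the resulting quantum cluster algebra captures all of $\qO[N_-(w)]$ after extending scalars. The second isomorphism will follow from the first by localization, since the frozen variables of $t_0(\ow)$ correspond under the construction to generators of the $q$-center $Z_q$.

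First I would construct the cluster data attached to the reduced word $\ow = s_{i_1}\cdots s_{i_\ell}$. For each $k \in \{1,\dots,\ell\}$, set $w_{\le k} = s_{i_1}\cdots s_{i_k}$ and let $D(\ow,k)$ be the unipotent quantum minor $D_{w_{\le k}\varpi_{i_k},\,\varpi_{i_k}}$ (a specific quantum analog of a generalized minor). Declare the vertex $k$ to be frozen exactly when $i_k$ does not appear again to the right in $\ow$, and define the exchange matrix $\tB(\ow)$ combinatorially from $\ow$ as in the GLS or GY recipe. I would then check (i) that each $D(\ow,k)$ lies in $\qO[N_-(w)]$ and is $q$-central in the frozen cases, (ii) that the $D(\ow,k)$ quasi-commute according to a compatible matrix $\Lambda(\ow)$ built from inner products of chamber weights, so $t_0(\ow)$ is a genuine quantum seed, and (iii) that $\{D(\ow,k)\}_k$ is algebraically independent (this follows from comparing highest weight components against the dual PBW basis).

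The main technical step is verifying that the mutations of $t_0(\ow)$ in the $q$-torus $\cF(t_0(\ow))$ actually reproduce the unipotent quantum minors attached to neighbouring reduced words, i.e., establishing the full set of quantum $T$-system (or quantum determinantal) identities
\[
D(\ow,k) * D(\mu_k \ow,k) = v^{\alpha_1} \prod_{j:\, b_{jk}>0} D(\ow,j)^{b_{jk}} + v^{\alpha_2} \prod_{j:\, b_{jk}<0} D(\ow,j)^{-b_{jk}}
\]
in $\qO[N_-(w)]$. For symmetric $\frg$, I would follow GLS and categorify: identify $\qO[N_-(w)]$ with the quantum Grothendieck ring of a subcategory $\mathcal{C}_w$ of $\mod\Pi$ (preprojective algebra), realise the $D(\ow,k)$ as dual semicanonical basis elements attached to the GLS rigid modules $M_k(\ow)$, and deduce the exchange relations from short exact sequences $0\to M_k \to \bigoplus M_j^{[b_{jk}]_+} \to M'_k \to 0$ together with their duals. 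For the general symmetrizable case, I would instead invoke the Goodearl–Yakimov framework: realise $\qO[N_-(w)]$ as an iterated Ore extension of CGL type, apply the Goodearl–Yakimov theorem producing a canonical quantum cluster structure on every symmetric CGL extension, and then match that cluster structure with $t_0(\ow)$ by comparing prime elements and Cauchon diagrams.

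Having the seed inside $\qO[N_-(w)]$ and the exchange relations, the Laurent phenomenon (Theorem \ref{thm:Laurent_phenomenon}) embeds $\bClAlg(t_0(\ow)) \hookrightarrow \qO[N_-(w)]$. For the reverse inclusion after $\otimes \Q(q^{1/2})$, I would argue that the dual PBW generators $F^{\opup}_-(\beta_r)$ associated with the root ordering from $\ow$ coincide up to $q^{1/2}$-scalars with certain quantum cluster variables obtained from $t_0(\ow)$ by an explicit chain of mutations reproducing the chamber structure of $\ow$; since these generate $\qO[N_-(w)]$ by definition, the inclusion $\qO[N_-(w)]\otimes \Q(q^{1/2}) \subseteq \bClAlg(t_0(\ow))\otimes \Q(q^{1/2})$ follows. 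Finally, localising at the frozen $D(\ow,k)$ corresponds exactly to inverting the generators of $Z_q$, giving the second isomorphism $\qO[N_-^w]\otimes\Q(q^{1/2}) \simeq \clAlg(t_0(\ow))\otimes\Q(q^{1/2})$. The hardest part is the quantum $T$-system verification in the symmetrizable case, where preprojective categorification is unavailable and one is forced into the more delicate Ore-extension machinery of Goodearl–Yakimov.
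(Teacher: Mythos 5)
The paper does not actually prove this theorem: it is stated as a quotation of Gei\ss--Leclerc--Schr\"oer and Goodearl--Yakimov, with a pointer to \cite[Section 8]{qin2020dual} for an exposition. Measured against those references, your outline is an accurate reconstruction of the real strategy: the initial quantum seed built from unipotent quantum minors $D_{w_{\le k}\varpi_{i_k},\varpi_{i_k}}$ with last occurrences frozen, the quantum determinantal ($T$-system) identities as the key computational input, the preprojective-algebra categorification in the symmetric case versus the CGL/Ore-extension machinery of Goodearl--Yakimov in the symmetrizable case, and surjectivity after extending scalars via the dual PBW generators. The identification of the frozen monoid with the $q$-center $Z_q$, which you use to deduce the second isomorphism by localization, also matches the paper's own remark in Section \ref{subsec:From-dual-canonical-to-triangular}.

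Two soft spots worth flagging. First, the Laurent phenomenon (Theorem \ref{thm:Laurent_phenomenon}) only places $\bClAlg(t_{0}(\ow))$ inside the initial quantum torus $\kk[\Mc(t_{0}(\ow))]$; it does not by itself give the containment $\bClAlg(t_{0}(\ow))\subseteq\qO[N_{-}(w)]$. That inclusion is a genuinely nontrivial step in the references: in the symmetric case it comes from realizing every cluster variable as the (quantum) cluster character of a reachable rigid module in the Frobenius category attached to $w$, and in the general case from the Goodearl--Yakimov identification of all cluster variables with normalized prime elements of intermediate subalgebras. Your sketch does contain the ingredients that supply this, but the logical role you assign to the Laurent phenomenon is misplaced. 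Second, the notation $D(\mu_{k}\ow,k)$ conflates seed mutation with moves on reduced words; only mutations realizing braid moves correspond to changing $\ow$, and the one-step exchange relation should instead be stated as the explicit quantum determinantal identity for the mutated minor. Neither issue changes the overall architecture, which is sound and consistent with the cited proofs.
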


We refer the reader to \cite[Section 8]{qin2020dual} for a brief
introduction to the above isomorphisms.

\subsubsection*{Triangular basis on quantum groups}
\begin{thm}[\cite{qin2020dual}]
\label{thm:can_triangular_basis} The dual canonical basis for $\qO[N_{-}^{w}]$
is the common triangular basis for $\clAlg(t_{0}(\ow))$ up to scalar
multiples in $q^{\frac{1}{2}\Z}$. In particular, all quantum cluster
monomials $X(t)^{m}$ belong to the dual canonical basis up to scalar
multiples.
\end{thm}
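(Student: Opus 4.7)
The plan is to identify the dual canonical basis $\dCan$ of $\qO[N_-^w]$, after normalizing by appropriate scalars in $q^{\frac{1}{2}\Z}$, as the common triangular basis of $\clAlg(t_0(\ow))$ by verifying the axioms of Definitions \ref{def:triangular_basis} and \ref{def:common_triangular_basis} at every seed $t \in \Delta^+$, and then invoking the uniqueness statement of Lemma \ref{lem:exist_tri_func}.

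First, I would verify the triangular basis conditions at the initial seed $t_0(\ow)$. Pointedness of $\dCan$ with respect to $\Mc(t_0)$ follows from the unitriangular change of basis between $\{F_-^{\opup}(\uc)\}$ and $\{B_-^{\opup}(\uc)\}$ in Lusztig's parametrization (off-diagonal entries in $q\Z[q]$), combined with the identification of Lusztig's parameters $\uc \in \N^{\ell(w)}$ with initial extended $g$-vectors under the cluster algebra isomorphism of Theorem \ref{thm:quantum_gp_cluster}; this unitriangularity is exactly unitriangularity under the dominance order $\prec_{t_0}$. Bar-invariance is the defining property of $\dCan$. The subtle condition is degree triangularity: for every initial cluster variable $X_i(t_0)$ and every $b \in \dCan$, one needs $v^\alpha X_i(t_0) * b$ to have leading term (under $\prec_{t_0}$) in $\dCan$ with lower-order error in $v^{-1}\Z[v^{-1}] \dCan$. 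Since the initial quantum cluster variables are themselves \emph{real} elements of $\dCan$ via Geiss-Leclerc-Schr\"oer's description as unipotent quantum minors, this reduces to the structural theory of multiplication by a real element in $\dCan$, which at the categorical level reflects the head/socle decomposition of convolution with the simple module categorifying $X_i(t_0)$ in the corresponding KLR (quiver Hecke) algebra.

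Second, I would propagate the triangular basis property from $t_0(\ow)$ to an arbitrary seed $t \in \Delta^+$ by induction along mutation sequences. Pointedness at $t$ transports via the tropical transformation rule for $g$-vectors; cluster variables at $t$ remain real elements of $\dCan$ by successive refinements due to Kimura, Kashiwara-Kim, and Kang-Kashiwara-Kim-Oh, so the same multiplicative machinery that handled $t_0$ produces degree triangularity at $t$; and cluster monomials at $t$ and $t[1]$ are contained in $\dCan$ by the same categorification input. Therefore $\dCan$ is the unique triangular basis $\can^t$ at every seed $t$, and the uniqueness in Lemma \ref{lem:exist_tri_func} forces the identification to be consistent across seeds. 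This yields the common triangular basis and, in particular, the claimed inclusion of all quantum cluster monomials in $\dCan$ up to scalars in $q^{\frac{1}{2}\Z}$.

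The main obstacle is the degree triangularity condition, which demands fine control of the multiplicative structure of $\dCan$ with respect to real elements. Historically this has only been accessible through monoidal categorification via KLR algebras, where one decomposes the convolution of the simple module categorifying a cluster variable with an arbitrary simple module in the Grothendieck ring, with exactly the $v^{-1}\Z[v^{-1}]$ error control demanded by Definition \ref{def:triangular_basis}. Handling this uniformly across \emph{every} seed and for \emph{all} symmetrizable Kac-Moody types (rather than only the acyclic or symmetric cases treated by the predecessors listed after Conjecture \ref{conj:FZ_conj}) is precisely the difficulty resolved in \cite{qin2020dual}.
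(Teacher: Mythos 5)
Your overall skeleton (verify the triangular-basis axioms at $t_{0}(\ow)$, propagate to all seeds, invoke the uniqueness of Lemma \ref{lem:exist_tri_func}) is consistent with the general strategy, but the key technical input you propose is precisely the one the paper says must be avoided. You reduce degree triangularity to the multiplicative structure of $\dCan$ with respect to real elements via the head/socle decomposition of convolution with simple modules over quiver Hecke algebras, and you propagate reality of cluster variables to all seeds via Kimura, Kashiwara--Kim and Kang--Kashiwara--Kim--Oh. That machinery is only available for \emph{symmetric} Kac--Moody algebras: for symmetrizable non-symmetric $\frg$ the self-dual simple modules of the quiver Hecke algebra categorify a basis that is \emph{different} from $\dCan$ (see the discussion in Section \ref{sec:Further-topics}), and the structure constants of $\dCan$ need not be positive, so there is no head/socle control on products with real elements. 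The Remark (Obstruction) immediately following the theorem states exactly this, and says that \cite{qin2020dual} relies instead on an analysis of tropical properties \cite{qin2019bases} rather than on positivity, geometric representation theory, or categorification. As written, your argument only recovers the symmetric case already treated in \cite{qin2017triangular} and \cite{Kang2018}; moreover your induction is circular with respect to the ``in particular'' clause, since you assume at each seed that the cluster variables are real elements of $\dCan$, which outside the symmetric setting is part of what the theorem asserts.

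What is missing is the replacement for the categorical input: one establishes unitriangularity of $\dCan$ over the distinguished functions $\Inj_{g}^{t}$ by comparing the lexicographic order on Lusztig's parametrizations with the dominance order (cf. Table \ref{table:compare_quantum_groups_cluster} and \cite[Section 9.1]{qin2020dual}), and one then uses the tropical machinery of \cite{qin2019bases} --- compatible pointedness of the basis at the seeds attached to reduced words, together with criteria in the spirit of Lemma \ref{lem:cluster_monomial_tropical} and Theorem \ref{thm:tropical_imply_basis} --- to upgrade the triangular basis at $t_{0}(\ow)$ to the common triangular basis at all seeds of $\Delta^{+}$ without invoking simple modules at any stage.
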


\begin{rem}[Obstruction]
 It is well-known that the structure constants of $\dCan$ are positive
for a symmetric Kac-Moody algebra $\frg$, the proof of which can be based
on geometric representation theory or monoidal categorification via
quiver Hecke algebras. But this property is NOT true for symmetrizable
$\frg$.

To prove Theorem \ref{thm:can_triangular_basis}, we rely on an analysis
of tropical properties \cite{qin2019bases} instead of the positivity
(or geometric representation theory as in \cite{qin2017triangular},
or categorification). See \cite{qin2020dual} for details.
\end{rem}

\subsubsection*{A comparison between quantum groups and cluster theory}

A comparison of analogous notions and structures in quantum groups
and cluster theory is summarized in Table \ref{table:compare_quantum_groups_cluster},
where $\sim$ means a correspondence or an analogy. Here, to any reduced
word $\ow'$ of $w$, we can associate a seed $t(\ow')\in\Delta^{+}=\Delta_{t_{0}(\ow)}^{+}$.
But the set $\Delta^{+}$ is usually infinite and, in particular,
not every seed arises from $\ow'$. A Lusztig parametrization $\uc$
can be translated into a vector $g\in\Mc(t_{0})$ via a linear map,
and one can compare the (partial) orders on both sides, see \cite[Section 9.1]{qin2020dual}
and \cite{casbi2020dominance}\cite{casbi2019newton}.

\begin{table}
\caption{Comparison: quantum groups and cluster algebras}
\label{table:compare_quantum_groups_cluster}%
\begin{tabular}{|c|c|c|}
\hline 
Quantum groups &  & cluster theory\tabularnewline
\hline 
\hline 
reduced words $\ow'$ for fixed $w$ & $\subset$ & seeds $t\in\Delta^{+}$\tabularnewline
\hline 
Lusztig parametrizations $\uc$ & $\sim$ & $g$-vectors in $\Mc(t_{0})$\tabularnewline
\hline 
lexicographical order &  & dominance order\tabularnewline
\hline 
dual PBW basis $F_{-1}^{\opup}(\uc,\ow)$ & $\neq$ & distinguished functions $I_{g}^{t}$\tabularnewline
\hline 
dual canonical basis & $=$ & common triangular basis $\can$\tabularnewline
\hline 
crystal structure &  & ?\tabularnewline
\hline 
representations &  & \tabularnewline
\hline 
\end{tabular}

\end{table}

\begin{rem}[Generalization of $\dCan$]
Note that the notion of the common triangular basis makes sense for
all injective-reachable quantum cluster algebras, which do not necessarily
arise from quantum groups. In view of Theorem \ref{thm:can_triangular_basis},
the common triangular basis $\can$ is the generalization of the dual
canonical basis in cluster theory.
\end{rem}

\subsection{The common triangular basis in monoidal categories\label{subsec:The-common-triangular-in-monoidal}}

\subsubsection*{Monoidal categorification of cluster algebras}

\cite{HernandezLeclerc09} proposed the notion of monoidal categorification
for classical cluster algebras. Roughly speaking, for a given cluster
algebra $\bClAlg$, one wants to find a monoidal category $(\cC,\otimes)$,
so that its Grothendieck ring $K_{0}(\cC)$ is isomorphic to the cluster
algebra $\bClAlg$. Moreover, the cluster monomials should correspond
to isoclasses of simple modules.
\begin{rem}
Our requirement for monoidal categorification is weaker than the original
proposal by \cite{HernandezLeclerc09}, which demanded that the cluster
monomials are in bijection with all \emph{real simple modules}. It
still remains open if they are in bijection, see the reachability
conjectures \cite[Remark 5.9]{qin2020analog}.
\end{rem}

\cite{HernandezLeclerc09} considered the quantum affine algebra $\envAlg(\widehat{\mathfrak{g}})$
for a simply-laced simple Lie algebra $\mathfrak{g}$ (Dynkin type
$ADE$). Its finite dimensional module category $\mod\envAlg(\widehat{\mathfrak{g}})$
is a monoidal category. For any level $N\in\N$, \cite{HernandezLeclerc09}
considered the level-N monoidal subcategories $\cC_{N}$ of $\mod\envAlg(\widehat{\mathfrak{g}})$.
They showed that $K_{0}(\cC_{N})$ is a classical cluster algebra.
They expected that $\cC_{N}$ provides the monoidal categorification
for this cluster algebra, for which one needs to verify that the cluster
monomials correspond to simple modules.

The $v$-deformation\footnote{It is usually called the $t$-deformed Grothendieck ring $K_{t}(\cC)$ in literature.
We use the quantum parameter $v$ instead of $t$ in our convention.} $K_{v}(\cC)$ of $K_{0}(\cC)$ can be constructed using Nakajima's
quiver varieties \cite{Nakajima04}\cite{VaragnoloVasserot03}, and
it is isomorphic to the corresponding quantum cluster algebra.
\begin{thm}[{\cite[Theorem 1.2.1(II)]{qin2017triangular}}]

The set of (bar-invariant) elements in $K_{v}(\cC_{N})$ corresponding
to the simple modules provides the common triangular basis for the
corresponding quantum cluster algebra (after localization at the frozen
variables, see \eqref{eq:localization}).

In particular, the cluster monomials correspond to simple modules
in $\cC_{N}$, thus verifying (a weaker form of) \cite[Conjecture 13.2]{HernandezLeclerc09}.
\end{thm}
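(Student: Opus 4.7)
The plan is to first realize the $v$-deformation $K_{v}(\cC_{N})$ geometrically as the convolution algebra on Nakajima's graded quiver varieties, so that for each simple module $L(\bpi)\in\cC_{N}$ (parametrized by a dominant monomial $\bpi$) the corresponding class $[L(\bpi)]\in K_{v}(\cC_{N})$ arises from an intersection cohomology complex. Verdier duality then yields bar-invariance of these classes. Using Nakajima's transition formula, $[L(\bpi)]$ expands in the standard-module basis as $\stdM(\bpi)+\sum_{\bpi'<\bpi}v^{-1}\Z[v^{-1}]\,\stdM(\bpi')$, while $\stdM(\bpi)$ is itself pointed as a (twisted) Laurent monomial in the Hernandez-Leclerc initial cluster. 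Identifying the weight ordering on dominant monomials with the cluster-theoretic dominance order $\prec_{t_{0}}$ attached to the HL initial seed $t_{0}$ then shows that $\{[L(\bpi)]\}$ is an $\Mc(t_{0})$-pointed, bar-invariant family.

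To verify the triangular basis axioms at $t_{0}$, I would use that the initial cluster variables of the HL seed are classes of Kirillov-Reshetikhin modules, and that their products with an arbitrary simple class expand in the IC basis with leading term given by a pointed monomial and correction terms in $v^{-1}\Z[v^{-1}]$; this follows from $T$-system relations together with positivity of stalks of IC sheaves on graded quiver varieties. I would then identify the cluster monomials in $t_{0}$ and in the shifted seed $t_{0}[1]$ with specific real simples (including those supported at injective dimension vectors for $t_{0}[1]$). By uniqueness of the triangular basis (Lemma \ref{lem:exist_tri_func}), this identifies $\{[L(\bpi)]\}$ with $\can^{t_{0}}$.

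To upgrade this to the \emph{common} triangular basis, I would verify the axioms at every seed $t\in\Delta^{+}$. Pointedness and bar-invariance are intrinsic to the basis, and the degree at each $t$ is governed by the tropical transformations $\phi_{k}$, so the nontrivial step is degree triangularity at an arbitrary mutated seed $\mu_{k}t_{0}$. I would proceed by induction on mutation distance, using the exchange relation for $X_{k}(\mu_{k}t_{0})$ combined with the triangularity at $t_{0}$ and the compatibility of the bilinear form $\Lambda$ with the twisted product under mutation. Once this propagates throughout $\Delta^{+}$, uniqueness at each seed identifies $\{[L(\bpi)]\}$ with $\can^{t}$ for every $t$.

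The main obstacle will be this inductive degree triangularity, since exchange relations introduce many new terms whose expansion in the IC basis is not a priori controlled by $v^{-1}\Z[v^{-1}]$. I expect to handle this geometrically, using the $*$-product on equivariant $K$-theory of Nakajima quiver varieties together with positivity for perverse sheaves on graded quiver varieties to bound the correction terms; alternatively, one may route the argument through the tropical machinery of \cite{qin2019bases}, first showing that $\{[L(\bpi)]\}$ is parametrized by tropical points of $\tropMc$, so that Lemma \ref{lem:cluster_monomial_tropical} forces cluster monomials at every seed to coincide with specific $[L(\bpi)]$, and then invoking Theorem \ref{thm:tropical_imply_basis} to close the loop. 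Throughout, Assumption \ref{assumption:full_rank} and the injective-reachable assumption must be checked for the HL seed, both of which hold in type $ADE$ by inspection of the HL quiver and the explicit shift automorphism on $\cC_{N}$.
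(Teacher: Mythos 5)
Since this theorem is cited from \cite{qin2017triangular} rather than proved in the survey, the relevant comparison is with the original argument there. Your primary route captures the spirit of that proof: the realization of $K_{v}(\cC_{N})$ via graded quiver varieties, the identification of simple classes with IC complexes so that bar-invariance is Verdier duality, the $v^{-1}\Z[v^{-1}]$ transition matrix from standards to simples, and the use of positivity of IC stalks to control degree triangularity at the HL seed $t_{0}$. All of this matches the geometric-representation-theoretic strategy of \cite{qin2017triangular}.

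However, the step you flag as ``the main obstacle'' is genuinely where the theorem lives, and your proposal for it is underdetermined. Inducting on mutation distance directly through exchange relations does not on its own give degree triangularity at $\mu_{k}t$: a priori you only control the leading term, and the lower-order pieces of $X_{k}(\mu_{k}t)\ast\can^{t}_{g}$ could fail to lie in $v^{-1}\Z[v^{-1}]$-span. The actual engine in \cite{qin2017triangular} is an explicit admissibility criterion for the initial triangular basis together with a one-step propagation lemma: it is proved that a triangular basis $\can^{t_0}$ that is compatibly pointed (i.e., parametrized by tropical points) and contains all cluster monomials in the seeds adjacent to $t_{0}$ automatically re-assembles as the triangular basis after one mutation, and this is what you induct on. You gesture at this, but never state it, and your ``alternative route'' does not actually close the gap: Lemma~\ref{lem:cluster_monomial_tropical} and Theorem~\ref{thm:tropical_imply_basis} tell you the set is a basis containing all cluster monomials, but neither supplies the degree-triangularity axiom at mutated seeds. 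You would still need the admissibility-style lemma, so the two routes are not independent.

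A second point your proposal passes over: the ring $K_{v}(\cC_{N})$ is not literally the HL quantum cluster algebra; they differ by a rescaling of frozen variables, handled in \cite{qin2017triangular} by the correction technique of \cite{Qin12}. Without this normalization the IC classes are not $g$-pointed in the sense of Definition~\ref{def:triangular_basis}, and the identification with $\can^{t_{0}}$ in your step invoking Lemma~\ref{lem:exist_tri_func} does not get off the ground. This is a routine but necessary reconciliation step, and it is part of why the theorem is stated with the parenthetical ``after localization at the frozen variables.'' So the outline is faithful to the original, but the two essential technical ingredients --- the precise admissibility/propagation lemma and the frozen-variable correction --- are missing from the proposal.
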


Recently, \cite{kashiwara2020categories} showed that more monoidal
categories consisting of modules of the quantum affine algebra $\envAlg(\widehat{\mathfrak{g}})$
provide monoidal categorification for cluster algebras.

\subsubsection*{Quantum groups and monoidal categorification}

For any symmetric Kac-Moody algebra $\mathfrak{g}$ and any $w\in W$,
one can construct a monoidal category $\cC_{w}$ consisting of $(\Z$-graded)
finite dimensional modules of the corresponding quiver Hecke algebras
\cite{KhovanovLauda08,KhovanovLauda08:III}\cite{Rouquier08}\cite{VaragnoloVasserot09},
such that the quantum unipotent subgroup $\qO[N_{-}(w)]$ is isomorphic
to the Grothendieck ring $K_{0}(\cC_{w})$. See \cite{GeissLeclercSchroeer10}\cite{Kang2018}.

By \cite{VaragnoloVasserot09}, under this isomorphism, the dual canonical
basis $\dCan$ is identified with the set of the isoclasses of the
\emph{self-dual} simple modules in $\cC_{w}$ (see \cite{Kang2018}).
Then Theorem \ref{thm:can_triangular_basis} implies that the common
triangular basis is the set of the isoclasses of the self-dual simple
modules (up to scalar multiplication and localization at the $q$-center,
see \eqref{eq:localization}).

\cite{Kang2018} proposed the notion of quantum monoidal categorification
and proved the following result.
\begin{thm}[\cite{Kang2018}]
The category $\cC_{w}$ provides the quantum monoidal categorification
for the corresponding quantum cluster algebra $\bClAlg(t_{0}(w))$.
In particular, all quantum cluster monomials correspond to self-dual
simple modules in $\cC_{w}$ up to scalar multiples.
\end{thm}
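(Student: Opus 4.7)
The plan is to reduce the assertion to the combination of three already-cited results and to address the remaining categorical content (the ``monoidal categorification'' structure itself, not just that cluster monomials are simple) by the $R$-matrix method.

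First, I would fix the formal meaning of ``quantum monoidal categorification'' introduced in \cite{Kang2018}: a distinguished family of simple modules in $\cC_{w}$, organized into a ``quantum monoidal seed'', such that every mutation is realized by a short exact sequence (or rather, a pair of exact sequences) of graded modules whose classes in $K_{0}(\cC_{w})$ recover the quantum exchange relation \eqref{eq:exchange_relation}. With this in hand, the categorification statement breaks into two logically independent pieces: (a) the initial seed $t_{0}(\ow)$ lifts to a quantum monoidal seed; and (b) the admissibility of being such a quantum monoidal seed is preserved under mutation at any unfrozen vertex.

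For (a), I would invoke the Varagnolo--Vasserot and Khovanov--Lauda--Rouquier isomorphism $\qO[N_{-}(w)]\simeq K_{0}(\cC_{w})$, which identifies the dual canonical basis with the set of self-dual simple classes, together with the GLS/GY isomorphism (Theorem \ref{thm:quantum_gp_cluster}) which identifies $\bClAlg(t_{0}(\ow))$ with $\qO[N_{-}(w)]$. Under this chain of identifications, the initial quantum cluster variables $X_{i}(t_{0})$ are among the dual PBW generators, which are known to be self-dual simples in $\cC_{w}$, so the initial seed is admissible. Once (a) and (b) are both established, the second assertion of the theorem (cluster monomials are self-dual simples) is nearly automatic: by induction on the number of mutations, every quantum cluster variable lifts to a self-dual simple, and a quantum cluster monomial in a single cluster is then a twisted product of mutually commuting self-dual simples, which categorifies to the convolution product of these simples; this product is simple thanks to the triviality of the relevant $R$-matrix between cluster variables in the same seed.

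The main obstacle is step (b). Categorifying a mutation amounts to exhibiting, for each unfrozen $k$, two exact sequences in $\cC_{w}$
\[
0\to L'\to L_{k}\otimes L_{k}'\to L''\to 0,\qquad 0\to L''\to L_{k}\otimes L_{k}'\to L'\to 0
\]
(or the appropriate graded analogues) that categorify the exchange relation between $X_{k}$ and the new cluster variable $X_{k}'$. This is where the deep input enters: the $R$-matrix $R_{L_{k},L_{k}'}$ between the two simples associated to the mutating variables must have a simple head and simple socle whose classes are precisely the two monomials on the right-hand side of the exchange relation, and one has to verify that these heads/socles are again real simple so that the induction on mutations may continue. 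This step uses the general theory of renormalized $R$-matrices and the numerical invariants $\Lambda,\tilde{\Lambda},\mathfrak{d}$ developed in \cite{Kang2018}, and it is where most of the technical work lies.

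As an alternative argument for the ``cluster monomials are simples'' assertion alone, one may bypass the inductive categorification of mutations and simply combine Theorem \ref{thm:can_triangular_basis} (quantum cluster monomials lie in $\dCan$ up to scalars in $q^{\frac{1}{2}\Z}$) with Varagnolo--Vasserot (elements of $\dCan$ correspond to self-dual simples in $\cC_{w}$). This gives the ``in particular'' half of the theorem without having to verify (b), but it does not by itself produce the categorified exchange sequences required to conclude that $\cC_{w}$ is a \emph{quantum monoidal categorification} in the sense of \cite{Kang2018}.
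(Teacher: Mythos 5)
The statement you are asked to prove is cited in the paper as a black-box result from \cite{Kang2018}; the paper itself contains no proof, only the attribution. So there is nothing internal to compare your proposal against, and the relevant question is whether your sketch accurately reflects the Kang--Kashiwara--Kim--Oh argument.

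Your outline is essentially the right shape: (i) formalize a quantum monoidal seed / admissibility, (ii) verify that the GLS initial seed $t_0(\ow)$ lifts to an admissible monoidal seed, (iii) show admissibility propagates under one-step mutation via the renormalized $R$-matrix machinery and the invariants $\Lambda,\tilde\Lambda,\mathfrak{d}$, and then observe that cluster monomials become convolution products of a commuting family of real simples. Your alternative for the ``in particular'' clause (Theorem \ref{thm:can_triangular_basis} composed with Varagnolo--Vasserot) is also logically sound, though it is worth flagging that it rests on \cite{qin2020dual}, which postdates \cite{Kang2018} and hence cannot be what KKKO used; it is a genuinely different route for that half of the statement, and you correctly note that it does not by itself recover the categorified exchange sequences.

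One concrete inaccuracy in step (a): the initial cluster variables of the GLS seed $t_0(\ow)$ are not ``among the dual PBW generators''. They are unipotent quantum (flag) minors, which are dual canonical basis elements but generally not dual PBW elements. The paper's own $\mathfrak{sl}_3$ example makes this explicit: $X_3 = q^{-3/2}B_-^{\opup}(\beta_1+\beta_3)$ is a dual canonical element, whereas the corresponding dual PBW element $F_-^{\opup}(\beta_1+\beta_3)=F_-^{\opup}(\beta_3)F_-^{\opup}(\beta_1)$ is a different element (a nontrivial sum of canonical elements). The conclusion you need for (a) --- that the initial cluster variables are classes of self-dual real simple modules --- is still true, but the route is via their being dual canonical (equivalently, via the structure theory of determinantal modules in KKKO), not via the dual PBW basis. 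With this correction, the proposal is a fair account of the cited proof.
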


\subsubsection*{Monoidal categorification and cluster theory}

In the literature, for (almost) all cluster algebras known to admit a (quantum)
monoidal categorification, such as those appearing in \cite{qin2017triangular}\cite{Kang2018}\cite{cautis2019cluster},
the set of the isoclasses of (self-dual) simple objects produces the
common triangular basis (up to scalar multiplication and localization at
the frozen variables). This is implied by \cite[Proposition 6.4.3]{qin2020dual}
or \cite[Theorem 6.5.7]{qin2017triangular}. From this point of view, the existence
of the common triangular basis $\can$ suggests that there might exist
a monoidal categorification.

In Remark \ref{rem:pointedness}, we have mentioned the similarity
between the character of a highest weight module $S(w)$ and the Laurent
expansion of a $g$-pointed element. The dominance order in cluster
theory has been studied in \cite{kashiwara2019laurent} from the point of
view of $R$-matrices. Table \ref{table:compare_monoidal_category_cluster}
summarizes the analogous notions and structures in monoidal categories
and cluster theory, when a monoidal categorification is known.

\begin{table}

\caption{Comparison: monoidal categories and cluster theory}
\label{table:compare_monoidal_category_cluster}

\begin{tabular}{|c|c|c|}
\hline 
Monoidal categories $\cC$ &  & cluster theory\tabularnewline
\hline 
\hline 
($v$-deformed) Grothendieck ring & $=$ & (quantum) cluster algebra $\bClAlg$\tabularnewline
\hline 
$\otimes$ &  & $*$\tabularnewline
\hline 
$\oplus$ &  & $+$\tabularnewline
\hline 
homomorphism &  & \tabularnewline
\hline 
$M_{i}\otimes M_{j}\ncong M_{j}\otimes M_{i}$ & $\sim$ & $X_{i}*X_{j}\neq X_{j}*X_{i}$\tabularnewline
\hline 
short exact sequence of ($\Z$-graded) objects & $\supset$ & (quantized) exchange relations\tabularnewline
$vM_{3}\rightarrowtail M_{1}\otimes M_{1}'\twoheadrightarrow v^{-1}M_{2}$ &  & $X_{1}*X_{1}'=vX_{3}+v^{-1}X_{2}$\tabularnewline
\hline 
Rigidity: $\cC$ has left/right dual & $\sim$ & $t$ is Injective-reachable: $\exists\ t[\pm1]$\tabularnewline
\hline 
\cite{casbi2020dominance}\cite{kashiwara2019laurent} & $\sim$ & dominance order\tabularnewline
\hline 
standard modules & $\neq$ & distinguished functions $\Inj_{g}^{t}$\tabularnewline
\hline 
\{(self-dual) simple objects\} & $\sim$ & common triangular basis $\can$\tabularnewline
\hline 
\emph{real} simple object & $\sim$ & cluster monomials\tabularnewline
\hline 
module character & $\sim$ & Laurent expansion\tabularnewline
$\chi S(w)=e^{w}(1+\sum_{n}c_{n}e^{n})$ &  & $\can_{g}=X^{g}(1+\sum c_{n}Y^{n})$\tabularnewline
\hline 
\end{tabular}

\end{table}

\section{The generic basis from representation theory\label{sec:generic_basis}}

Throughout this section, we assume that the principal $B$-matrix
for the initial seed $t_{0}$ is skew-symmetric, so that we can associate
a principal quiver $Q$ to it.

\subsection{Generic cluster characters}

Let $Q$ denote the principal quiver. A potential $W$ is a $\C$-linear
combination of (possibly infinitely many) oriented cycles in $Q$.
The set of potentials is an infinite dimensional space, and we take a generic
one such that $W$ is non-degenerate (see \cite{DerksenWeymanZelevinsky08}).

We define $\hJ$ to be the completed Jacobian algebra associated to
the quiver with potential $(Q,W)$, see \cite{DerksenWeymanZelevinsky08}
for details. In particular, $\hJ$ is a quotient algebra of the completion
of the path algebra $\kk Q$. See Section \ref{subsec:Example:-the-generic}
for an example.

In our convention, we will consider left modules for the opposite
algebra $\hJ^{\op}$. Let $I_{k}$ denote the $k$-th indecomposable
injective module for $\hJ^{\op}$. Under the injective-reachable assumption, each
$I_{k}$ is finite dimensional. Note that the converse is not true,
see the Markov quiver example \cite[Example 4.3]{plamondon2013generic}. 

For any $\hJ^{\op}$-module $V$, consider its minimal injective resolution

\begin{align*}
0  \rightarrow  V\rightarrow I^{m}\rightarrow I^{m'}
\end{align*}
where $m,m'\in\N^{I_{\ufv}}$, and we denote $I^{m}=\oplus I_{k}^{m_{k}}$.
Define the principal $g$-vector for $V$ by $g_{V}=m'-m\in\Z^{I_{\ufv}}$.

Consider the classical case $\kk=\Z$. We have the following Caldero-Chapoton
map (CC-map for short) sending any $\hJ^{\op}$-module $V$ to a Laurent
polynomial in $\kk[\Mc(t_{0})]$.
\begin{defn}[Caldero-Chapoton map \cite{CalderoChapoton06}\cite{DerksenWeymanZelevinsky09}]

For any $\hJ^{\op}$-module $V$, we define $CC(V)$ as the following element in $\kk[\Mc(t_{0})]$:

\begin{align*}
CC(V)  =  X^{g_{V}}\cdot(\sum_{n\in\N^{I_{\ufv}}}\chi(\Gr_{n}V)Y^{n}),
\end{align*}
where $\chi(\ )$ denote the topological Euler characteristic, and
$\Gr_{n}V$ denote the quiver Grassmannian consisting of the $n$-dimensional
submodules of $V$.
\end{defn}

For any vector $g\in\Z^{I_{\ufv}}$, we have the affine space $\Hom_{\hJ^{\op}}(I^{[-g]_{+}},I^{[g]_{+}})$.
Then it has an open dense subset on which $CC(\ker f)$ is constant,
where $f$ belongs to $\Hom_{\hJ^{\op}}(I^{[-g]_{+}},I^{[g]_{+}})$. This constant
value is called the generic cluster character associated to $g$ and
is denoted by $\gen_{g}$. Embedding $\Z^{I_{\ufv}}$ and $\Z^{I_{\fv}}$
into $\Z^{I}$ by adding $0$ entries, we define the generic cluster
character $\gen_{g+m}=X^{m}\cdot\gen_{g}$ for any $g\in\Z^{I_{\ufv}}$
and $m\in\Z^{I_{\fv}}$.

When the generic cluster characters form a basis for $\upClAlg$,
the basis is called the generic basis. In this case, we say that the
generic basis for $\upClAlg$ exists.
\begin{thm}[{\cite[Theorem 1.2.3]{qin2019bases}}]
\label{thm:exist_generic_basis}Consider the classical case $\kk=\Z$.
For any skew-symmetric injective-reachable upper cluster algebra $\upClAlg$
under the full rank assumption, the generic basis exists.
\end{thm}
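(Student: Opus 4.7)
The plan is to apply the tropical criterion of Theorem~\ref{thm:tropical_imply_basis}: it suffices to exhibit, for every tropical point $[g]\in\tropMc$, an element of $\upClAlg$ parametrized by $[g]$, and I claim that the generic cluster characters $\gen_g$ supply exactly such a family. Since the frozen directions are not mutated, the factor $X^m$ in the definition $\gen_{g+m}=X^m\cdot\gen_g$ is tropically inert, so one may concentrate on principal $g$-vectors $g\in\Z^{I_{\ufv}}$ and handle frozen shifts trivially at the end.

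The first step is to verify that each $\gen_g$ is $g$-pointed in the initial seed $t_0$. This is essentially built into the Caldero--Chapoton formula: the $n=0$ summand contributes $X^{g_V}$ with coefficient $\chi(\Gr_0 V)=1$, while the remaining terms produce Laurent monomials strictly smaller in the dominance order $\prec_{t_0}$. The injective-reachable assumption is crucial here, as it guarantees that the indecomposable injectives $I_k$ of $\hJ\op$ are finite-dimensional, so that both the affine Hom-space $\Hom_{\hJ\op}(I^{[-g]_+},I^{[g]_+})$ and the quiver Grassmannians of its generic kernel are of finite type and $\gen_g$ is a genuine Laurent polynomial.

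The core of the argument is to establish tropical compatibility: for every seed $t\in\Delta^+$, the element $\gen_g$ admits a Laurent expansion in $\kk[\Mc(t)]$ that is $g'$-pointed, where $g'\in\Mc(t)$ is the image of $g$ under the iterated tropical transformations $\phi_k$. By induction on mutation distance it is enough to treat a single mutation $t\mapsto\mu_k t$. The plan is to invoke the mutation theory of decorated representations over quivers with potential of Derksen--Weyman--Zelevinsky together with Plamondon's cluster-category framework: non-degeneracy of $W$ produces a mutated QP $(\mu_k Q,\mu_k W)$, whose Jacobian algebra has its own injectives $I_k^{\mu_k t}$, and the induced triangle equivalence between the associated cluster categories intertwines the Caldero--Chapoton maps on both sides. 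Since mutation sends the generic stratum in $\Hom_{\hJ\op}(I^{[-g]_+},I^{[g]_+})$ to the generic stratum of the corresponding Hom-space over $\mu_k\hJ$, the two candidate values coincide as elements of the identified skew-fields $\cF(t)\simeq\cF(\mu_k t)$, and the $g$-vector transforms precisely via $\phi_k$. This simultaneously yields $\gen_g\in\upClAlg$ (Laurent-ness in every seed) and parametrization by $[g]\in\tropMc$; Theorem~\ref{thm:tropical_imply_basis} then delivers the basis property for free.

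The main obstacle is the tropical compatibility step. One must carefully track genericity through QP mutation: it is not enough to mutate individual representations: one needs the generic stratum of an affine Hom-space to map to the generic stratum on the mutated side, and to verify that the CC-map commutes with this mutation at the level of generic values rather than on isolated modules. This is where Plamondon's description of the cluster category via presentations from the initial cluster-tilting object is essential. The full-rank assumption also enters in a subtle way: it ensures that $g$-vectors and $F$-polynomials are uniquely determined by the Laurent expansion, so that ``pointedness in every seed'' unambiguously matches the combinatorial notion of parametrization by a tropical point, after which Theorem~\ref{thm:tropical_imply_basis} closes the argument.
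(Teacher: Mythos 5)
Your proposal follows exactly the paper's route: the paper's proof is precisely the combination of Plamondon's result that generic cluster characters are parametrized by the tropical points with the criterion of Theorem~\ref{thm:tropical_imply_basis}. The only difference is that the paper cites \cite{plamondon2013generic} for the tropical compatibility step, whereas you sketch (correctly, in outline) how that argument goes via DWZ mutation and the cluster category.
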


\begin{proof}
\cite{plamondon2013generic} showed that the generic cluster characters
are parametrized by the tropical points. The desired claim then follows
as a consequence of Theorem \ref{thm:tropical_imply_basis}.
\end{proof}

\subsection{Calculating bases using representation theory: Kronecker quiver\label{subsec:Example:-the-generic}}

Let us discuss the representation theory for the generic basis in
Example \ref{example:Kronecker}. 

In this case, the potential $W$ vanishes, and the completed Jacobian
algebra $\hJ$ is the path algebra $\C Q$. Consider the opposite
quiver $Q^{\op}:1\Longrightarrow2$. Let $\mathrm{Rep}(Q^{\op},\underline{d})$
denote the affine space consisting of the $\underline{d}$-dimensional
$Q^{\op}$-representations over $\mathbb{C}$, which correspond to
$\hJ^{\op}$-modules, see \cite{CrawleyBoevey:quiver}.

Each rigid module $M$ for $\hJ^{\op}=\C Q^{\op}$ corresponds to an
open dense subset in $\mathrm{Rep}(Q^{\op},\underline{d})$. Moreover,
$CC(M)$ is a cluster monomial. It is well-known that all cluster
monomials not divisible by the initial cluster variables take this form
(to restore the initial cluster variables, one needs the notion of decorated
representations \cite{DerksenWeymanZelevinsky09}, $\tau$-rigid
pairs \cite{adachi2014tau}, or cluster categories \cite{BuanMarshReinekeReitenTodorov06} \cite{CalderoChapotonSchiffler06}).

It is known that the points in an open dense subset of $\mathrm{Rep}(Q^{\op},(1,1))$
correspond to the indecomposable modules of dimension $(1,1)$, which
are usually parametrized as $V_{L}(\lambda)$, $\lambda\in\C\mathbb{P}^{1}$.
Choose any such module and denote it by $V_{L}$. Independent of the
choice, we have

\begin{align*}
CC(V_{L}) & =  X_{1}X_{2}^{-1}(1+Y_{2}+Y_{1}Y_{2})
\end{align*}
It is straightforward to check that $CC(V_{L})$ equals the element
$[L]$ in the coefficient-free Skein algebra $\Sk'(\Sigma)$ for the
closed simple loop $L$.

Next, for any $k\in\N$, there is an open dense subset of $\mathrm{Rep}(Q^{\op},(k,k))$,
such that its points correspond to modules isomorphic to $\oplus_{i=1}^{k}V_{L}(\lambda_{i})$,
$\lambda_{i}\neq\lambda_{j}$. Choose any such module and denote it
by $V_{L^{k}}$. Independent of the choice, we have
\begin{align*}
CC(V_{L^{k}})=CC(V_{L})^{k}=[L^{k}]\in\Sk'(\Sigma).
\end{align*}
$CC(V_{L^{k}})$ is a generic cluster character.
\begin{rem}
For any chosen $V_{L}$, we also have a $(k,k)$-dimensional indecomposable
module $V_{\Band^{k}(L)}$, unique up to isomorphism, which
is obtained from $V_{L}$ by iterated self-extensions. It turns out that

\[
CC(V_{\Band^{k}(L)})=[\Band^{k}(L)]\in\Sk'(\Sigma).
\]

We conjecture that this result can be generalized to more surfaces.
\end{rem}

\begin{rem}
In this example, we can compute the bracelets $[\Brac^{k}(L)]$
using transverse quiver Grassmannians instead of quiver Grassmannians
in the CC-map, see \cite{dupont2010transverse,irelli2013homological}. By \cite{allegretti2019categorified}, this formula holds true for unpunctured surfaces.
\end{rem}

\section{Further topics\label{sec:Further-topics}}

\subsection{The theta basis and representation theory}

At first sight, there seems to exist no representation theoretic interpretation for the
theta basis.\footnote{Scattering diagrams and some theta functions can be constructed using representation theory, see \cite{bridgeland2017scattering} \cite{brustle2017stability}.} To see this, let there be given a monoidal categorification
for a cluster algebra. Then the common triangular basis elements should
correspond to the simple modules, which are often viewed as the minimal
building blocks in representation theory, see Section \ref{subsec:The-common-triangular-in-monoidal}.
However, as in the case of Example \ref{example:Kronecker}, one can
expect that a common triangular basis element is sometimes a positive
linear combination of theta basis elements, see Section \ref{sec:Topological-models}.
So the theta basis elements appearing cannot correspond to any genuine
modules.

However, it seems that the theta basis might be understood by considering  the \emph{affine Grassmannians} and the \emph{Langlands
dual}. More precisely, the recent
work \cite{baumann2019mirkovic} defined the Mirković-Vilonen basis
for the algebra $\C[N_{-}]$ for a simple simply-connected algebraic
group $G$. The basis is constructed from cycles on the affine Grassmannian
for the Langlands dual group $G^{\vee}$. By \cite{baumann2019mirkovic},
calculation in small examples seems to suggest
an affirmative answer for the following question.

\begin{Quest}

Does the Mirković-Vilonen basis coincide with the theta basis?

\end{Quest}

\subsection{A new family of bases}

Recall that, by a positive basis, we mean a basis with positive structure
constants.

Assume $\mathfrak{g}$ is a semi-simple Lie algebra for simplicity.
When its Cartan matrix is symmetrizable, we cannot find
a positive basis for the unipotent quantum subgroup $\qO[N_{-}]$ from the previous three families of bases. More precisely, it is known that the
generic basis is often not positive and we do not know its quantization.
The dual canonical basis (the common triangular basis) is not positive
for the symmetrizable case. The theta basis is positive at the classical
level, but not positive at the quantum level for the symmetrizable
case.

Nevertheless, finite-dimensional self-dual simple modules of quiver
Hecke algebras give rise to a positive basis for $\qO[N_{-}]$. Note
that this basis is the same as the dual canonical basis if the Cartan
matrix is symmetric, but different if not.

We hope that this basis is well-behaved for cluster theory. If so,
it provides a new family of interesting bases for cluster algebras.
In particular, we hope to have affirmative answers for the following
questions.

\begin{Quest}

Does this basis contain all cluster monomials? Is it parametrized
by the tropical points?

\end{Quest}

%-------------------------------------------------
%  Insert acknowledgments and information
%  regarding funding at the end of the last
%  section, i.e., right before the bibliography.
%-------------------------------------------------

\section*{Acknowledgments}

The author thanks the anonymous referee for carefully reading the manuscript and for giving many comments.

%-------------------------------------------------
%  Insert the bibliography here.
%  Order the items alphabetically by author.
%  Abbreviate titles of journals and book series
%  as in zbMATH or Mathematical Reviews.
%  Use \bibliographystyle{emsplain}.
%-------------------------------------------------
\newcommand{\etalchar}[1]{$^{#1}$}
\def\cprime{$'$}
\providecommand{\bysame}{\leavevmode\hbox to3em{\hrulefill}\thinspace}
\providecommand{\MR}{\relax\ifhmode\unskip\space\fi MR }
% \MRhref is called by the amsart/book/proc definition of \MR.
\providecommand{\MRhref}[2]{%
  \href{http://www.ams.org/mathscinet-getitem?mr=#1}{#2}
}
\providecommand{\href}[2]{#2}

\bibliography{referenceEprint}

\end{document}